\documentclass[10pt,a4paper]{article}
\usepackage{amsmath,bm,amssymb,xcolor,amsthm,mathrsfs,cite,hyperref,float}
\usepackage[mathscr]{euscript}
\usepackage{accents}
\usepackage{amsfonts}
\usepackage{booktabs}
\usepackage{multirow}
\usepackage{siunitx}
\usepackage{multirow}
\usepackage{graphicx}
\usepackage[a4paper, bottom=0.6in, top=0.55in, includehead, includefoot, left=1in, right=1in]{geometry}
\usepackage{ upgreek }
\DeclareMathOperator*{\esssup}{ess\,sup}
\theoremstyle{definition}

\newtheorem{theorem}{Theorem}[section]
\newtheorem{corollary}{Corollary}[section]
\newtheorem{lemma}{Lemma}[section]
\newcommand{\bq}{{\bm q}}
\newcommand{\bE}{{\bm e}}
\newcommand{\bz}{{\bm z}}

\newcommand{\bv}{\bm{v}}
\newcommand{\bn}{{\bm n}}
\newcommand{\bQ}{{\bm Q}}
\newcommand{\bV}{{\bm V}}
\newcommand{\bL}{{\bm L}}
\newcommand{\bi}{{\bm \varphi}}
\newcommand{\pV}{{\bm{\Pi}}_{\bf V}}
\newcommand{\bs}{{\theta}}
\newcommand{\br}{{\vartheta}}
\newcommand{\pw}{{ \Pi}_{W}}

\newcommand{\bW}{{\bm W_{22}}}
\newcommand{\bH}{{\bf {H}}}

\numberwithin{equation}{section}
\author{Shipra Gupta 
\thanks{ Department of Mathematics, Birla Institute of Technology and Science, Pilani, Pilani Campus, Vidya Vihar, Pilani Rajasthan 333031 India (p20190427@pilani.bits-pilani.ac.in)}
and Amiya Kumar Pani {\footnote{Corresponding author}}
\thanks{Department of Mathematics, Birla Institute of Technology and Science, Pilani, K.K. Birla Goa Campus, Zuarinagar, Goa-403726, India.
(amiyap@goa.bits-pilani.ac.in)}
and  Sangita Yadav 
\thanks{Department of Mathematics, Birla Institute of Technology and Science, Pilani, Pilani Campus, Vidya Vihar, Pilani Rajasthan 333031 India  (sangita.yadav@pilani.bits-pilani.ac.in) }
}
\title{On Two Conservative HDG Schemes  for Nonlinear Klein-Gordon Equation}
\date{}
\makeatletter
\newcommand{\opnorm}{\@ifstar\@opnorms\@opnorm}
\newcommand{\@opnorms}[1]{%
  \left|\mkern-1.5mu\left|\mkern-1.5mu\left|
   #1
  \right|\mkern-1.5mu\right|\mkern-1.5mu\right|
}
\newcommand{\@opnorm}[2][]{%
  \mathopen{#1|\mkern-1.5mu#1|\mkern-1.5mu#1|}
  #2
  \mathclose{#1|\mkern-1.5mu#1|\mkern-1.5mu#1|}
}
\makeatother
\begin{document}
\maketitle
\begin{abstract}
In this article, a hybridizable discontinuous Galerkin (HDG) method is proposed and analyzed for the Klein-Gordon equation with local Lipschitz-type non-linearity. {\it A priori} error estimates are derived, and it is proved that approximations of the flux and the displacement converge with order $O(h^{k+1}),$ where $h$ is the discretizing parameter and $k$ is the degree of the piecewise polynomials to approximate both flux and displacement variables. After post-processing of the semi-discrete solution, it is shown that the post-processed solution converges with order $O(h^{k+2})$ for $k \geq 1.$ Moreover, a second-order conservative finite difference scheme is applied to discretize in time 
and it is proved that the discrete energy is conserved with optimal error estimates for the completely discrete method. 
To avoid solving a nonlinear system of algebraic equations at each time step, a non-conservative scheme is proposed, and its error analysis is also briefly established. Moreover, another variant of the HDG scheme is analyzed, and error estimates are established. Finally, some numerical experiments are conducted to confirm our theoretical findings.
\end{abstract}

\medskip
{\small {\bf AMS subject classifications.} 65N12, 65N22}
\medskip

\section{Introduction}

This paper focuses on two  HDG methods for the following non-linear Klein-Gordon equation: 
\begin{equation}\left.
\begin{array}{rl}
u_{tt}-\Delta u+f(u)=0&\text{in }\Omega\times(0,T],\\
u=0&\text{on }\partial\Omega\times(0,T],\\
u|_{t=0}=u_0,&u_t|_{t=0}=u_1\text{ in }\Omega,
\end{array}\right\}\label{main}
\end{equation}
where $\Omega$ is a convex polygonal domain in $\mathbb{R}^2$ with boundary $\partial\Omega$, $f(u)$ is a given nonlinear function, and $u_0,~ u_1$ are also given functions. 
When $f(s) = s^3-s$, the problem \eqref{main}, popularly known as the Klein-Gordon equation, occurs as a mathematical model in relativistic quantum mechanics \cite{kragh1984equation} and also in order-disorder transition in solid mechanics \cite{currie1980statistical}. Moreover, when $f(s)= \sin s$, \eqref{main} is called sine-Gordon equation and
 has a wide range of applications in the propagation of fluxons in Josephson junctions between two superconductors, the motion of a rigid pendulum coupled to a stretched wire, and fluid motion stability. For the existence of a unique weak solution of problem \eqref{main}, we refer to \cite{tourigny1990product,kapitanski1994global}. 
 
Earlier literature on numerical methods on finite difference methods for the nonlinear Klein-Gordon equation can be found in \cite{jimenez1990analysis,duncan1997sympletic,wong1997initial,achouri2019finite} and references therein. Tourigny \cite{tourigny1990product} has applied a conforming finite element method in spatial direction combined with product approximation for the nonlinear term and has discussed the optimal rate of convergence for the semi-discrete case. 
In the context of discontinuous Galerkin (LDG) schemes applied to  the two-dimensional sine-Gordon equation on Cartesian grids,
Baccouch  \cite{baccouch2019optimal} 
has derived error estimates and superconvergence results. 

Now, we briefly discuss the HDG methods, which is the theme of this paper. The HDG methods have been introduced in 
\cite{cockburn2009unified,cockburn2009derivation}  in the context of steady-state diffusion  problem, their  convergence and
super-convergence properties in \cite{cockburn2012conditions} as well as their efficiently
implemented in \cite{kirby2012}. These  schemes provide approximations that are more accurate than the
ones given by any other DG methods for second-order elliptic problems \cite{cockburn2010hybridizable}.
The researchers in \cite{cockburn2019interpolatory,chen2019superconvergent,sanchez2021priori,sanchez2022error} have addressed the nonlinear source term using the HDG method for semilinear elliptic and parabolic problems, essentially with globally Lipschitz non-linearity. With the help of the interpolatory HDG approach, the nonlinear component has been dealt with efficiently, and the optimal convergence rate has been proved. For a class of DG methods and with the aid of appropriate post-processing of the semi-discrete potential, Pani and Yadav \cite{yadav2019superconvergent} have derived superconvergence for nonlinear parabolic problems. Concerning a second-order linear wave equation, Cockburn {\it et al.}  \cite{cockburn2018stormer} have devised the energy-conservative HDG methods and have derived uniform in-time estimates along with the superconvergence result of the post-processed solution. For a recent survey on HDG methods, see \cite{cockburn2023}.
 
In most of the papers discussed earlier on numerical approximations to the Klein-Gordon equation, except \cite{tourigny1990product}, the authors have derived their results for a global Lipschitz non-linearity. Since the non-linearity present in the problem \eqref{main} is of polynomial type, and in fact locally  Lipschitz type, as a first step, we derive  {\it a priori}  bounds, which helps to deal with the nonlinearity in the discrete problem. This study builds on two conservative HDG techniques for the nonlinear Klein-Gordon equation with odd-degree non-linearity and achieves discrete energy conservation. The major contributions of this paper are summarized below. 
  \begin{itemize}
      \item {\it A priori} bound in  $L^p$ norm for the discrete displacement variable in the semidiscrete HDG method is derived, and it is exploited to deal with the local Lipschitz non-linearity in the discrete system.
      \item  Convergence of order $O(h^{k+1}),$ is obtained when piece-wise polynomials of degree $k$ approximate both displacement and the flux variables. 
Subsequently, using a variant of Baker's technique, the optimal rate of convergence for the displacement is established under a less smooth regularity assumption on the exact solution.
      \item Low-cost local postprocessing for the displacement has helped us to improve $L^{\infty}(L^2)$ norm convergence, which is of order $k + 2$.
      \item Fully discrete conservative and non-conservative schemes are proposed using the discrete-time Galerkin method, and corresponding error estimates are established.
      \item A variant of the HDG method based on \cite{qiu2018hdg} is analyzed, and error estimates are derived.
      \item The HDG approach has been numerically tested with various degrees of polynomials. Numerical results show that the displacement and flux converge with optimal order, while the post-processed solution achieves superconvergence. These results confirm our theoretical findings. 
  \end{itemize}
  
  This paper is structured as follows. Section $2$ introduces the semi-discrete HDG approach, provides some preliminary results and assumptions, and shows that the method is conservative. Section $3$ focuses on a priori error analysis for approximating the HDG solution. The post-processing is covered in  Section $4$. Section $5$ deals with fully discrete conservative and non-conservative schemes and their error analyses. Section $6$ demonstrates the numerical results and concluding remarks. 
  
  Throughout this paper, whenever there is no confusion, the standard Sobolev spaces $H^m(\Omega)$ will be denoted by $H^m$
  with norm $\|\cdot\|_{H^m}.$ Moreover, $C$ denotes a generic positive constant, which may vary from step to step.

\section{HDG Formulation}
For HDG formulation, the original problem \eqref{main} is first rewritten in mixed form as a system of first-order differential equations by adding an auxiliary flux variable $\bq$. This results in:

\begin{equation*}
\bq=\nabla u \text{ in }\Omega\times(0,T].
\end{equation*}
Then, \eqref{main} becomes
\begin{equation*}
u_{tt}-\nabla\cdot \bq+f(u)=0, \text{ in }\Omega\times(0,T].
\end{equation*}
\subsection{Discretization}
Let $\mathscr{T}_h$ be a shape regular triangulation of $\bar{\Omega}$ with $h_K=\text{diam}(K)~\forall~ K\in\mathscr{T}_h$ and $h=\max\limits_{K\in\mathscr{T}_h}h_K$. Let $\mathcal{E}_h$ be the set of the faces $E$ of elements $K$ of the triangulation $\mathscr{T}_h$, with $\mathcal{P}_k(K)$  as the space of polynomials of degree $\leqslant k$ on $K$. Set 
 
\begin{align*}
W_h&=\{w\in L^2(\Omega):w|_K\in \mathcal{P}_k(K)~\forall ~K\in \mathscr{T}_h\},\\
\bV_h&=\{\bv\in[L^2(\Omega)]^2:\bv|_K\in [\mathcal{P}_k(K)]^2~\forall ~K\in \mathscr{T}_h\},\\
M_h&=\{\mu\in L^2(\mathcal{E}_h):\mu|_{E}\in \mathcal{P}_k({E})~\forall ~{E}\in \mathcal{E}_h\}.
\end{align*}
Define
\[ L^2(\mathscr{T}_h) = \prod_{K \in \mathscr{T}_h} L^2(K),~~ L^2(\mathcal{E}_h)=\prod_{E \in \mathcal{E}_h}L^2({E}),\] 
with inner-product and norm, respectively, as
\begin{align*}
    (\textbf{f},\textbf{g})&=\sum_{K\in\mathscr{T}_h}(\textbf{f},\textbf{g})_K\; ,\quad (\phi,\psi)=\sum_{K\in\mathscr{T}_h}(\phi,\psi)_K\;,\quad\langle\phi,\psi\rangle=\sum_{K\in\mathscr{T}_h}\langle\phi,\psi\rangle_{\partial K}\;,\\
    \langle \phi,\psi\rangle_{\mathcal{E}_h/\Gamma_\partial}&=\sum_{E\in \mathcal{E}_h/\Gamma_\partial}\int_{E}\phi\psi \;\quad\text{ and } \quad\langle \phi,\psi\rangle_{\Gamma_\partial}=\sum_{E\in\Gamma_\partial}\int_{E}\phi\psi\;,
\end{align*}
where, \[(\textbf{f},\textbf{g})_K=\int_K\textbf{f}\cdot\textbf{g}\; ,\quad (\phi,\psi)_K=\int_K\phi\psi\;,\quad \langle\phi,\psi\rangle_{\partial K}=\int_{\partial K}\phi\psi \;.\]

 For vector valued  square integrable space $\bL^2(\mathscr{T}_h) = (L^2(\mathscr{T}_h))^d$, the inner-product and norm are similarly defined.
 
Now, the HDG semi-discrete formulation  is for $t\in(0,T]$ to seek $(u_h(t),\bq_h(t),\widehat{u}_h(t))\in W_h\times \bV_h\times M_h$ such that
\begin{subequations}
\begin{eqnarray}
(\bq_h,\bv_h)+(u_h,\nabla\cdot\bv_h)-\langle\widehat{u}_h,\bv_h\cdot\bn\rangle&=&0\quad\forall \bv_h\in\bV_h,\label{hdg1}\\
(u_{htt},w_h)+(\bq_h,\nabla w_h)-\langle\widehat{\bq}_h\cdot\bn,w_h\rangle+(f(u_h),w_h)&=&0\quad\forall w_h\in W_h,\label{hdg2}\\
\langle \widehat{\bq}_h\cdot\textbf{n},\mu_1\rangle_{\mathcal{E}_h\setminus\Gamma_\partial}&=&0\quad\forall\mu_1\in M_h,\label{hdg3}\\
\langle \widehat{u}_h,\mu_2\rangle_{\Gamma_\partial}&=&0\quad\forall\mu_2\in M_h,\label{hdg4}\\
\widehat{\bq}_h\cdot \bn=\bq_h\cdot\bn-\tau(u_h-\widehat{u}_h) &&\text{ on } \mathcal{E}_h \label{hdg5}
\end{eqnarray}
\end{subequations}
with initial conditions:
\begin{equation}\label{hgd-ICs}
u_h(0)= u_{h0} \in W_h\;\;\;\mbox{ana}\; u_{ht}(0)= u_{h1}\in W_h,
\end{equation}
where $u_{h0}$ and $u_{h1}$ are to be defined later with the following approximation property:
\begin{equation}\label{IC-approx}
\|u_0-u_{h0}\| + \|u_1-u_{h1}\| =O(h^{k+1}).
\end{equation}
Here, $\Gamma_\partial= \mathcal{E}_h\cap\partial\Omega $ is the collection of exterior edges of the domain $\Omega$, $\tau:\mathcal{E}_h\rightarrow\mathbb{R}^+$ is constant on each face of $K\in \mathscr{T}_h$ and $\bn$ is the unit outward normal to element $K$. 
We define the faces norm as \[\|\psi-\widehat\psi\|_+^2= \sum_{E\in\mathscr{E}_h} \|\tau^{1/2}(\psi-\widehat \psi)\|^2_{L^2(E)}\] 
\subsection{HDG Projections}
Now introduce HDG projections $\Pi_W:L^2(K)\rightarrow\mathcal{P}_k(K)$ and ${\boldsymbol \Pi}_{V}:[L^2(K)]^2\rightarrow[\mathcal{P}_k(K)]^2$ such that for any $u\in L^2(K)$ and $\bq\in [L^2(K)]^2$ define $(\Pi_W u,{\boldsymbol \Pi}_{V}\bq)$ as
\begin{subequations}\label{Pj1}
\begin{eqnarray}
(\pV\bq,\bv)_{K}=(\bq,\bv)_{K}&&\text{for all } \bv\in [\mathcal{P}_{k-1}(K)]^2, \label{Pj11}\\
(\pw u,w)_{K}=(u,w)_{K}&&\text{for all } w\in \mathcal{P}_{k-1}(K), \label{Pj12}\\
\langle \pV\bq\cdot\bn-\tau\pw u,\mu\rangle_{E}=\langle \bq\cdot\bn-\tau u,\mu\rangle_{E} && \text{for all }\mu\in\mathcal{P}_{k}(E).\label{Pj13}
\end{eqnarray}
\end{subequations}
This projection $(\pV,\pw)$ as shown in the Appendix $A$ of \cite{cockburn2010projection} is well defined and has reasonable approximation properties, which are stated below in terms of a Lemma~\ref{tt1}.

Based on these projections, we define the quantities 
\begin{align}
    \br_u=\pw u-u,~\br_{\bq}=\pV\bq-\bq,~\widehat{\br}_u= \mathcal{P}u-u,
\end{align} 
where $\mathcal{P}$ is the $L^2$ projection onto $M_h$. 

\begin{lemma}\label{tt1}
Suppose $k \geqslant 0,~\tau|_{\partial K}$ is non-negative and $\tau_{K}^{\max}$:=  $\max\tau|_{\partial K}$ $>$ 0. Then, the system \eqref{Pj1} is uniquely solvable for $(\pV, \pw)$. Furthermore, there is constant $C$ independent of $K$ and $\tau$ such that
\begin{align*}
\lVert\br_{\bq}\rVert_{[L^2(K)]^2} \leqslant Ch_{K}^{l_{q}+1} |\bq|_{{H^{l_{q}+1}}(K)} + Ch_{K}^{l_{u}+1} \tau_{K}^{*}| u |_{{H^{l_{u}+1}}(K)},\\
\lVert\br_u\rVert_{L^2(K)} \leqslant Ch_{K}^{l_{u}+1}| u |_{{H^{l_{u}+1}}(K)} + C\frac{h_{K}^{l_{q}+1}}{\tau_{K}^{\max}} |\nabla . \bq|_{H^{l_{q}}(K)},
\end{align*}
for $l_{u},~l_{q}$ in $[0,k]$, here $\tau_{K}^{*}$ :=  $\max\tau|_{\partial K/ {E}^{*}}$, $ {E}^{*}$ is a face of $K$ at which $\tau|_{\partial K}$ is maximum. Hence, if $\tau_{K}^{*}$
and $\dfrac{1}{\tau_{K}^{\max}}$ are uniformly bounded for all $K \in \mathscr{T}_h$, then the projection converges with order $k+1$ for both variables provided the solution is smooth enough.
\end{lemma}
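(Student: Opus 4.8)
The plan is to establish unique solvability first, by a dimension count together with an injectivity (energy) argument, and then to obtain the approximation bounds by comparing the HDG projection with the ordinary $L^2$-projection and scaling to a reference element.

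\textbf{Step 1 (Unique solvability).} Since \eqref{Pj1} is a square linear system on the finite-dimensional space $[\mathcal{P}_k(K)]^2\times\mathcal{P}_k(K)$ --- on a triangle the number of scalar conditions in \eqref{Pj11}--\eqref{Pj13} equals $\dim[\mathcal{P}_k(K)]^2+\dim\mathcal{P}_k(K)$ --- it suffices to prove that the homogeneous problem $(\bq=0,~u=0)$ has only the trivial solution. Writing this solution as $(\bV,W)$, I would test \eqref{Pj11} with $\bv=\nabla W\in[\mathcal{P}_{k-1}(K)]^2$, \eqref{Pj12} with $w=\nabla\!\cdot\!\bV\in\mathcal{P}_{k-1}(K)$, and \eqref{Pj13} with $\mu=W|_E$. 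One integration by parts on $K$ collapses the first two identities to $\langle\bV\cdot\bn,W\rangle_{\partial K}=0$, while the face identity gives $\langle\bV\cdot\bn,W\rangle_{\partial K}=\langle\tau W,W\rangle_{\partial K}$. Hence $\langle\tau W,W\rangle_{\partial K}=0$; using $\tau\ge 0$ with $\tau_K^{\max}>0$ forces $W=0$ on the faces where $\tau>0$, and then \eqref{Pj13} yields $\bV\cdot\bn=0$ on all of $\partial K$. Combining $\bV\cdot\bn=0$ with $(\bV,\bv)_K=0$ for all $\bv\in[\mathcal{P}_{k-1}]^2$ (which forces $\nabla\!\cdot\!\bV=0$ after one more integration by parts) and the orthogonality of $W$ to $\mathcal{P}_{k-1}$ lets me conclude $\bV\equiv 0$ and $W\equiv 0$ through a short argument on the polynomial structure of $K$.

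\textbf{Step 2 (Reduction of the error to face data).} Let $P_V$ and $P_W$ be the $L^2$-orthogonal projections onto $[\mathcal{P}_k(K)]^2$ and $\mathcal{P}_k(K)$. Because the system \eqref{Pj1} reproduces polynomials of degree $\le k$, linearity shows that $\delta_\bq:=\pV\bq-P_V\bq$ and $\delta_u:=\pw u-P_W u$ solve the \emph{same} left-hand side as \eqref{Pj1} with the interior equations \eqref{Pj11}--\eqref{Pj12} homogeneous and all the data living on the faces:
\begin{equation*}
\langle\delta_\bq\cdot\bn-\tau\delta_u,\mu\rangle_E=\langle(\bq-P_V\bq)\cdot\bn-\tau(u-P_W u),\mu\rangle_E,\qquad\mu\in\mathcal{P}_k(E).
\end{equation*}
Then $\br_\bq=\delta_\bq+(P_V\bq-\bq)$ and $\br_u=\delta_u+(P_W u-u)$; the ordinary projection errors $P_V\bq-\bq$ and $P_W u-u$ are controlled directly by the Bramble--Hilbert lemma, so the whole estimate reduces to bounding $\delta_\bq$ and $\delta_u$ in terms of the face data above.

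\textbf{Step 3 (Scaling and the $\tau$-dependence).} I would map $K$ to a fixed reference triangle $\widehat K$; the well-posedness from Step~1 guarantees that the reference problem has a bounded inverse, so $\|\delta_\bq\|$ and $\|\delta_u\|$ are controlled by the face data, and transforming back together with the trace/approximation estimates for $(\bq-P_V\bq)\cdot\bn$ and $(u-P_W u)$ produces the powers $h_K^{l_q+1}$ and $h_K^{l_u+1}$. The delicate point, and the main obstacle, is extracting the precise weights $\tau_K^{*}$ and $1/\tau_K^{\max}$: the $\tau\delta_u$ coupling has to be tracked so that in the flux estimate the $u$-contribution is multiplied by $\tau_K^{*}$, whereas in the displacement estimate one divides by $\tau_K^{\max}$ and --- crucially --- the $\bq$-contribution enters only through $\nabla\!\cdot\!\bq$ rather than $\bq$ itself. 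This last refinement, which gives $|\nabla\!\cdot\!\bq|_{H^{l_q}(K)}$ in the bound on $\br_u$, is not visible to a crude norm estimate; it must be read off from the divergence structure built into \eqref{Pj1}. Assembling the three steps and summing the Bramble--Hilbert contributions then yields the two stated inequalities, and the concluding sentence on order $k+1$ convergence follows by taking $l_u=l_q=k$ when $\tau_K^{*}$ and $1/\tau_K^{\max}$ are uniformly bounded.
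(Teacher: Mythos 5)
First, a point of reference: the paper does not prove this lemma at all. It is quoted verbatim from Appendix~A of Cockburn--Gopalakrishnan--Sayas (reference \cite{cockburn2010projection}), and the text immediately preceding the statement says exactly that. So the comparison here is between your sketch and the proof in that cited reference. Your overall architecture (unisolvency by a dimension count plus an injectivity argument, then reduction to the auxiliary quantities $\delta_{\bq}=\pV\bq-P_V\bq$, $\delta_u=\pw u-P_W u$ driven only by face residuals, then scaling) does match the structure of the actual proof, and your Step~1 is essentially correct: the count $\dim[\mathcal{P}_{k-1}(K)]^2+\dim\mathcal{P}_{k-1}(K)+3\dim\mathcal{P}_k(E)=\tfrac32(k+1)(k+2)$ matches the number of unknowns, and the injectivity argument ($\langle\tau W,W\rangle_{\partial K}=0$, then $W=\lambda_{E^*}p$ with $p\in\mathcal{P}_{k-1}$ and $(W,p)_K=0$ forcing $p=0$) is the one used in the reference, though you leave the final ``$\bV\equiv0$'' step as an unexamined assertion.

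The genuine gap is Step~3, and you concede it yourself. A bounded-inverse-on-the-reference-element argument applied to the face data $\langle(\bq-P_V\bq)\cdot\bn-\tau(u-P_Wu),\mu\rangle_E$ gives at best $\lVert\delta_{\bq}\rVert+\tau_K^{\max}\lVert\delta_u\rVert\lesssim h_K^{1/2}\bigl(\lVert(\bq-P_V\bq)\cdot\bn\rVert_{\partial K}+\tau_K^{\max}\lVert u-P_Wu\rVert_{\partial K}\bigr)$, which produces the weight $\tau_K^{\max}$ (not $\tau_K^{*}$) in the flux bound and, worse, bounds the $\bq$-contribution to $\lVert\br_u\rVert$ by $h_K^{l_q+1}|\bq|_{H^{l_q+1}(K)}/\tau_K^{\max}$ rather than by $h_K^{l_q+1}|\nabla\cdot\bq|_{H^{l_q}(K)}/\tau_K^{\max}$. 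Both refinements are the actual content of the lemma and both require specific constructions you do not supply: the seminorm $|\nabla\cdot\bq|_{H^{l_q}(K)}$ appears only after one tests the face equation with $\mu=\delta_u$, integrates by parts, and uses \eqref{Pj11} to convert $\langle(\bq-P_V\bq)\cdot\bn,\delta_u\rangle_{\partial K}$ into $(\nabla\cdot\bq-P_{k-1}(\nabla\cdot\bq),\delta_u)_K$; and the weight $\tau_K^{*}$ (the maximum of $\tau$ over $\partial K\setminus E^{*}$) appears only after one splits $\delta_u$ relative to the distinguished face $E^{*}$ where $\tau$ attains its maximum and estimates the two pieces separately. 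Saying that these facts ``must be read off from the divergence structure'' identifies where the work lies but does not do it; as written, the proposal establishes neither stated inequality.
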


\subsection{Energy Conservation Property}

This subsection deals with the discrete conservation property.
Set $F(s)=\frac{1}{4}(1-s^2)^2$ and  observe that, $F(s)\geqslant 0$ and $F^{\prime}(s)=f(s)$ for all $s\in\mathbb{R}$. Below, we present the energy conservation property.
\begin{theorem}\label{2.1}
The HDG approximation $(u_h(t),\bq_h(t),\widehat{u}_h(t))\in W_h\times \bV_h\times M_h$ given by \eqref{hdg1}-\eqref{hdg4} satisfies the following discrete conservation  property 
\begin{eqnarray}
\mathcal{E}(t)=\mathcal{E}(0),\ \ \forall \ t\in[0,T]\label{th1-1}
\end{eqnarray}
where
\begin{eqnarray*}
\mathcal{E}(t)&=&\dfrac{1}{2}\big(\|u_{ht}(t)\|^2+\|\bq_h(t)\|^2+\|u_h(t)-\widehat{u}_h(t)\|_+^2+2(F(u_h(t)),1)\big).
\end{eqnarray*}
\end{theorem}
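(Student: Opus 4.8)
The plan is to show that $\mathcal{E}'(t)=0$ for every $t\in(0,T]$, which together with continuity in time yields \eqref{th1-1}. Differentiating the four contributions to $\mathcal{E}(t)$ and using $F'=f$, I would first record
\begin{equation*}
\frac{d}{dt}\mathcal{E}(t)=(u_{htt},u_{ht})+(\bq_{ht},\bq_h)+\langle\tau(u_{ht}-\widehat{u}_{ht}),u_h-\widehat{u}_h\rangle+(f(u_h),u_{ht}),
\end{equation*}
where the third term comes from differentiating $\tfrac12\|u_h-\widehat u_h\|_+^2$. The strategy is to eliminate each term using the semidiscrete equations with judiciously chosen test functions, and then to show that the remaining interface contributions vanish by virtue of the conservativity and boundary conditions.

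The key substitutions are as follows. Taking $w_h=u_{ht}$ in \eqref{hdg2} handles the first and last terms, giving $(u_{htt},u_{ht})+(f(u_h),u_{ht})=-(\bq_h,\nabla u_{ht})+\langle\widehat{\bq}_h\cdot\bn,u_{ht}\rangle$. Next, differentiating \eqref{hdg1} in time (legitimate since $\bV_h$ is independent of $t$) and choosing $\bv_h=\bq_h$ gives $(\bq_{ht},\bq_h)=-(u_{ht},\nabla\cdot\bq_h)+\langle\widehat{u}_{ht},\bq_h\cdot\bn\rangle$. Substituting both and applying the elementwise Green's identity $(\bq_h,\nabla u_{ht})+(u_{ht},\nabla\cdot\bq_h)=\langle\bq_h\cdot\bn,u_{ht}\rangle$ collapses all volume terms, leaving only face integrals:
\begin{equation*}
\frac{d}{dt}\mathcal{E}(t)=-\langle\bq_h\cdot\bn,u_{ht}\rangle+\langle\widehat{\bq}_h\cdot\bn,u_{ht}\rangle+\langle\widehat{u}_{ht},\bq_h\cdot\bn\rangle+\langle\tau(u_{ht}-\widehat{u}_{ht}),u_h-\widehat{u}_h\rangle.
\end{equation*}
Inserting the flux definition \eqref{hdg5}, namely $\widehat{\bq}_h\cdot\bn=\bq_h\cdot\bn-\tau(u_h-\widehat{u}_h)$, the $\bq_h\cdot\bn$ contributions cancel pairwise and the stabilization term $\langle\tau u_{ht},u_h-\widehat u_h\rangle$ cancels exactly against part of the fourth term, reducing the identity to $\frac{d}{dt}\mathcal{E}(t)=\langle\widehat{u}_{ht},\widehat{\bq}_h\cdot\bn\rangle$.

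The final step, which I expect to be the main obstacle in terms of careful bookkeeping, is to argue that $\langle\widehat{u}_{ht},\widehat{\bq}_h\cdot\bn\rangle=0$. I would split the sum over $\partial K$ into interior faces $\mathcal{E}_h\setminus\Gamma_\partial$ and boundary faces $\Gamma_\partial$. Since $M_h$ does not depend on time, $\widehat{u}_{ht}\in M_h$ is an admissible test function: taking $\mu_1=\widehat{u}_{ht}$ in the conservativity condition \eqref{hdg3} kills the interior contribution, where one uses that the trace $\widehat{u}_{ht}$ is single-valued on each interior face while the outward normal flips, so that the two element contributions assemble into exactly the form tested in \eqref{hdg3}. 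For the boundary faces, testing \eqref{hdg4} with $\mu_2=\widehat{u}_h$ forces $\widehat{u}_h=0$ on $\Gamma_\partial$, hence $\widehat{u}_{ht}=0$ there and the boundary term drops. This yields $\mathcal{E}'(t)\equiv 0$ and hence $\mathcal{E}(t)=\mathcal{E}(0)$. The only genuinely delicate point is the orientation and normal accounting in assembling $\langle\widehat{u}_{ht},\widehat{\bq}_h\cdot\bn\rangle$ on interior faces so that \eqref{hdg3} applies verbatim; everything else is routine testing combined with the elementwise Green's identity.
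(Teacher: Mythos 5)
Your proposal is correct and follows essentially the same route as the paper: differentiate \eqref{hdg1} in time, test with $\bv_h=\bq_h$ and $w_h=u_{ht}$, apply the elementwise Green's identity, and use the flux definition \eqref{hdg5} together with the transmission/boundary conditions \eqref{hdg3}--\eqref{hdg4} (tested with $\mu_1=\widehat{u}_{ht}$, and $\widehat u_h=0$ on $\Gamma_\partial$) to reduce the remaining face terms to $\tfrac12\frac{d}{dt}\|u_h-\widehat u_h\|_+^2$. The only difference is cosmetic bookkeeping: you start from $\mathcal{E}'(t)$ and show it vanishes, while the paper assembles the tested equations first and then identifies the energy derivative.
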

\begin{proof}

Differentiate \eqref{hdg1} with respect to $t$. Then, choose $\bv_h=\bq_h$ in the resulting equation and $w_h=u_{ht}$ in \eqref{hdg2}. On adding these two resulting equations, we arrive at
\begin{equation}
\frac{1}{2}\frac{d}{dt}\left(\|u_{ht}\|^2+\|\bq_h\|^2+2(F(u_h),1)\right)+(u_{ht},\nabla\cdot \bq_h)+(\bq_h,\nabla u_{ht})-\langle\widehat{u}_{ht},\bq_h\cdot\bn\rangle-\langle\widehat{\bq}_h\cdot\bn,u_{ht}\rangle=0.\label{pf1-1}
\end{equation} 
Now, the use of integration by parts shows
\begin{equation}
(u_{ht},\nabla\cdot\bq_h)+(\bq_h,\nabla u_{ht})=\langle u_{ht},\bq_h\cdot\bn\rangle.\label{pf1-2}
\end{equation}
An application of \eqref{hdg3} with \eqref{hdg4} and \eqref{pf1-2} yields
\begin{equation}
(u_{ht},\nabla\cdot \bq_h)+(\bq_h,\nabla u_{ht})-\langle\widehat{u}_{ht},\bq_h\cdot\bn\rangle-\langle\widehat{\bq}_h\cdot\bn,u_{ht}\rangle=\frac{1}{2}\frac{d}{dt}\|u_h-\widehat{u}_h\|_+^2.\label{pf1-3}
\end{equation}
On substitution of \eqref{pf1-3} in \eqref{pf1-1}, we obtain
\[\frac{d}{dt}\mathcal{E}(t)=0.\]
After integrating from $0$ to $t$, we obtain that
$\mathcal{E}(t)=\mathcal{E}(0)\ \ \forall\ t\in(0,T].$
This completes the proof of \eqref{th1-1}. 
\end{proof}
\noindent
The next corollary is a consequence of the discrete energy conservation property. Note that from \eqref{IC-approx}, it follows for a positive constant $C$ that
\begin{equation}\label{IC-stability}
\|u_{h0}\| \leqslant C\;\|u_0\|\;\;\;\mbox{and}\;\;\|u_{h1}\| \leqslant C\;\|u_1\|.
\end{equation}

 \begin{corollary}\label{coro}
There is a positive constant $C$ such that for $t>0$,
\begin{eqnarray*}
\|u_{ht}(t)\|^2+ \|\bq_h(t)\|^2+\|u_h-\widehat{u}_h\|_+^2+\|u_h(t)\|^4_{L^4}\leqslant C.
\end{eqnarray*}
\end{corollary}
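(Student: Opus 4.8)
The plan is to read the bound off directly from the conservation identity $\mathcal{E}(t)=\mathcal{E}(0)$ of Theorem~\ref{2.1}. The only genuine difficulty is that the potential contribution $2(F(u_h),1)$ is sign-indefinite (the potential $F$ is a double well), so it does not by itself control $\|u_h\|_{L^4}^4$; this indefiniteness must be absorbed.

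First I would expand the potential. Since $F(s)=\frac14(1-s^2)^2=\frac14-\frac12 s^2+\frac14 s^4$, integration over $\Omega$ gives
\[
2(F(u_h(t)),1)=\frac12|\Omega|-\|u_h(t)\|^2+\frac12\|u_h(t)\|_{L^4}^4 .
\]
Substituting this into $2\mathcal{E}(t)=\|u_{ht}\|^2+\|\bq_h\|^2+\|u_h-\widehat u_h\|_+^2+2(F(u_h),1)$ and using $\mathcal{E}(t)=\mathcal{E}(0)$ yields
\[
\|u_{ht}(t)\|^2+\|\bq_h(t)\|^2+\|u_h-\widehat u_h\|_+^2+\frac12\|u_h(t)\|_{L^4}^4
= 2\mathcal{E}(0)-\frac12|\Omega|+\|u_h(t)\|^2 .
\]

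The key step is to control the surviving $\|u_h(t)\|^2$ on the right. Because $\Omega$ is bounded, Hölder's inequality gives $\|u_h\|^2\le|\Omega|^{1/2}\|u_h\|_{L^4}^2$, and Young's inequality then gives $\|u_h\|^2\le\frac14\|u_h\|_{L^4}^4+|\Omega|$. Absorbing the term $\frac14\|u_h\|_{L^4}^4$ into the left-hand side leaves
\[
\|u_{ht}(t)\|^2+\|\bq_h(t)\|^2+\|u_h-\widehat u_h\|_+^2+\frac14\|u_h(t)\|_{L^4}^4
\le 2\mathcal{E}(0)+\frac12|\Omega| ,
\]
which is exactly the asserted estimate, provided the right-hand side is a finite constant independent of $t$ (and of $h$).

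It remains to bound $\mathcal{E}(0)$. The velocity term is controlled by $\|u_{h1}\|\le C\|u_1\|$ from \eqref{IC-stability}; the potential term $(F(u_{h0}),1)\le C(1+\|u_{h0}\|_{L^4}^4)$ is controlled through \eqref{IC-stability} together with the approximation used to define $u_{h0}$; and $\|\bq_h(0)\|$ and $\|u_{h0}-\widehat u_h(0)\|_+$ are bounded in terms of $\|u_{h0}\|$ because, with $u_h=u_{h0}$ fixed, the pair $(\bq_h(0),\widehat u_h(0))$ solves the well-posed HDG gradient-reconstruction system \eqref{hdg1}, \eqref{hdg3}--\eqref{hdg5}. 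Hence $\mathcal{E}(0)\le C$ and the proof is complete. I expect the absorption forced by the double-well shape of $F$ to be the main obstacle; once that is handled, the estimate is an immediate corollary of energy conservation.
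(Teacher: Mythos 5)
Your proposal is correct and follows essentially the same route as the paper: read the bound off from the conservation identity $\mathcal{E}(t)=\mathcal{E}(0)$ of Theorem~\ref{2.1} and then bound the initial energy $\mathcal{E}(0)$ in terms of the data via \eqref{IC-stability} and the definition of the discrete initial values. If anything, your explicit absorption of the sign-indefinite $-\|u_h\|^2$ contribution of the double-well potential (via H\"older and Young, kicking back $\tfrac14\|u_h\|_{L^4}^4$) is more careful than the paper's displayed chain, which passes directly from $\mathcal{E}(t)$ to a bound containing $\|u_h(t)\|_{L^4}^4$ without recording that step.
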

\begin{proof}
    From \eqref{th1-1}, 
    it is easy to note  using \eqref{IC-stability}   that
    \begin{align*}
        \|u_{ht}(t)\|^2+ \|\bq_h(t)\|^2+\|u_h-\widehat{u}_h\|_+^2+\|u_h(t)\|^4_{L^4} \leqslant &\mathcal{E}(t)= \mathcal{E}(0)\\ 
    \leqslant &\|u_{ht}(0)\|^2+ \|\bq_h(0)\|^2+\|u_h(0)-\widehat{u}_h(0)\|_+^2\\& +\frac{1}{2} ((1-u^2_h(0))^2,1) \\
    \leqslant& C\;\Big(\|u_1\|^2 + \|\nabla u_0\|^2 + |\Omega|+ \|u_0\|^{4}_{L^4}\Big).
  \end{align*}
This completes the rest of the proof.
\end{proof}
\section{ Semidiscrete Error Estimates}
\subsection{\texorpdfstring{$L^{p}$}{TEXT} Estimate for the Discrete solution }
\begin{lemma}\label{ll2}
  For given $w_h \in W_h$, $\widehat{w}_h \in M_h$ and $\textbf{y}\in (L^2(\Omega))^2$, let $\bz\in (L^2(\Omega))^2$ satisfy
\begin{eqnarray}
(\bz,\bv_h)+(w_h,\nabla\cdot\bv_h)-\langle\widehat{w}_h,\bv_h\cdot\bn\rangle&=&(\textbf{y},\bv_h)\label{p1} 
\end{eqnarray}
for all $\bv_h\in \bV_h$. Then, there exists a positive constant $C$ independent of $h$ such that for $p\in [2,\infty)$
\begin{eqnarray*}
\|w_h\|_{L^{p}} \leqslant C \Big( \|\bz\| + \|w_h-\widehat{w}_h\|_++\|\textbf{y}\| \Big),
\end{eqnarray*}
and for $p=\infty$
\begin{eqnarray*}
\|w_h\|_{L^{\infty}} \leqslant C\; \log \frac{1}{h}\;\Big( \|\bz\| + \|w_h-\widehat{w}_h\|_++\|\textbf{y}\| \Big),
\end{eqnarray*}
\end{lemma}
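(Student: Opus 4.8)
The plan is to prove this by a duality argument against an auxiliary elliptic problem, using the HDG projection $(\pV,\pw)$ to transfer the given relation \eqref{p1} onto the dual data. First I would write, via $L^p$--$L^{p'}$ duality with $\tfrac1p+\tfrac1{p'}=1$,
\[
\|w_h\|_{L^p}=\sup_{g\in L^{p'},\ \|g\|_{L^{p'}}=1}(w_h,g),
\]
and for each such $g$ introduce $\phi$ solving $-\Delta\phi=g$ in $\Omega$, $\phi=0$ on $\partial\Omega$, together with $\bm\psi=\nabla\phi$, so that $\nabla\cdot\bm\psi=-g$ and $(w_h,g)=-(w_h,\nabla\cdot\bm\psi)$. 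Since $\Omega$ is convex, elliptic regularity gives $\|\bm\psi\|_{W^{1,p'}}\le C\|g\|_{L^{p'}}$, and the Sobolev embedding $W^{1,p'}\hookrightarrow L^2$ (valid for $p'\in(1,2]$ in two dimensions, using the boundedness of $\Omega$) yields $\|\bm\psi\|_{L^2}\le C\|g\|_{L^{p'}}$; tracking how these constants grow as $p'\downarrow1$ will be essential for the endpoint case.

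Next I would test \eqref{p1} with $\bv_h=\pV\bm\psi$, integrate by parts element by element, and exploit the projection identities \eqref{Pj11} and \eqref{Pj13}. Because $\nabla w_h\in[\mathcal P_{k-1}(K)]^2$, \eqref{Pj11} gives $(\nabla w_h,\pV\bm\psi)=(\nabla w_h,\bm\psi)$, and combining the two integrations by parts produces the identity
\[
(w_h,g)=(\bz-\textbf{y},\pV\bm\psi)+\langle w_h-\widehat w_h,(\pV\bm\psi-\bm\psi)\cdot\bn\rangle-\langle\widehat w_h,\bm\psi\cdot\bn\rangle_{\Gamma_\partial},
\]
where the inter-element contributions of the single-valued flux $\bm\psi\cdot\bn$ collapse onto the jumps of $w_h-\widehat w_h$. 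The decisive simplification is that, since $(w_h-\widehat w_h)|_E\in\mathcal P_k(E)$, the projection property \eqref{Pj13} converts the face term into $\langle\tau(w_h-\widehat w_h),\pw\phi-\phi\rangle$, which is exactly the shape controlled by the face norm $\|\cdot\|_+$.

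It then remains to estimate the three pieces. The volume term is bounded by $(\|\bz\|+\|\textbf{y}\|)\,\|\pV\bm\psi\|\le C(\|\bz\|+\|\textbf{y}\|)\|g\|_{L^{p'}}$ using $L^2$-stability of $\pV$ and the embedding above; the face term is bounded by $\|w_h-\widehat w_h\|_+\,\|\tau^{1/2}(\phi-\pw\phi)\|_{\mathcal E_h}\le C\|w_h-\widehat w_h\|_+\|g\|_{L^{p'}}$ via a trace inequality and the approximation estimates of Lemma~\ref{tt1}. The remaining boundary term $\langle\widehat w_h,\bm\psi\cdot\bn\rangle_{\Gamma_\partial}$ vanishes for the homogeneous numerical trace occurring in the applications (where $\widehat w_h|_{\Gamma_\partial}=0$ by \eqref{hdg4}), and in general is absorbed using the face norm together with the trace of $\widehat w_h$. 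Dividing by $\|g\|_{L^{p'}}$ and taking the supremum yields the claimed bound for $p\in[2,\infty)$.

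For the endpoint $p=\infty$ I would not use duality directly (the dual datum would be a Dirac mass and $\bm\psi\notin L^2$); instead I would interpolate the finite $L^p$ bounds through the two-dimensional inverse inequality $\|w_h\|_{L^\infty}\le C\,h^{-2/p}\|w_h\|_{L^p}$. The previous steps, carried out with the constant tracked as a function of $p$, give $\|w_h\|_{L^p}\le C\,p\,R$ with $R=\|\bz\|+\|w_h-\widehat w_h\|_++\|\textbf{y}\|$, the linear growth in $p$ stemming from the $W^{1,p'}$ regularity and embedding constants as $p'\downarrow1$. Optimising $p\,h^{-2/p}$ over $p\ge2$ (the minimiser is $p\sim\log\frac1h$) then produces the factor $\log\frac1h$. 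The two points demanding the most care are this tracking of the $p$-dependence of the elliptic constants, which drives the logarithm, and the bookkeeping of the inter-element and boundary face terms so that they are controlled precisely by $\|w_h-\widehat w_h\|_+$; the reduction via \eqref{Pj13} is what makes the latter clean.
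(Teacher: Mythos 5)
Your proposal follows essentially the same route as the paper: an $L^p$--$L^{q}$ duality argument against the elliptic problem $-\Delta\psi=g$, testing \eqref{p1} with $\pV\nabla\psi$, and using the projection identities \eqref{Pj11} and \eqref{Pj13} to reduce the face contributions exactly to $\langle\tau(w_h-\widehat w_h),\pw\psi-\psi\rangle$, which is controlled by $\|w_h-\widehat w_h\|_+$. The only cosmetic difference is at $p=\infty$, where the paper invokes a discrete Sobolev inequality $\|w_h\|_{L^\infty}\leqslant C\log\frac1h\,\|w_h\|_{L^p}$ from the literature, while you rederive the logarithm by tracking the $p$-dependence of the elliptic constants and optimizing the inverse inequality $\|w_h\|_{L^\infty}\leqslant Ch^{-2/p}\|w_h\|_{L^p}$ over $p$ --- the same mechanism, made self-contained.
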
\label{lp}
\begin{proof}
For $g \in L^q(\Omega), ~ \frac{1}{p}+\frac{1}{q} = 1$, let $\psi \in W^{2,q}(\Omega)$ be a solution of the elliptic problem
\begin{eqnarray*}
-\Delta \psi=g \text{ in }\Omega \text{ and } \psi=0 \text{ on }\partial\Omega
\end{eqnarray*}
 satisfying the regularity  $$\|\psi\|_{W^{2,q}}\le C\|g\|_{L^q}.$$
With $\Psi = \nabla \psi$, 
 and using the commutative property of HDG projection (from \cite{cockburn2010projection}),  it follows that 
\begin{eqnarray*}
(w_h,g)& =& -(w_h,\nabla\cdot \Psi) =- (w_h,\nabla \cdot \pV \Psi) +\langle w_h,\tau(\pw \psi-\psi)\rangle\\
        &=& (\bz,\pV\Psi)-\langle \widehat{w}_h,\pV \Psi\cdot\bn\rangle+\langle w_h,\tau(\pw\psi-\psi)\rangle-(\textbf{y},\pV\Psi)\quad{[\text{using }\eqref{p1}]}\\
        &=& (\bz,\pV\Psi)+\langle\tau(w_h-\widehat{w}_h),\pw \psi-\psi\rangle-\langle \widehat{w}_h,\pV\Psi\cdot\bn-\tau(\pw\psi-\psi)\rangle-(\textbf{y},\pV\Psi)\\
        &=& (\bz,\pV \Psi)+\langle\tau(w_h-\widehat{w}_h),\pw \psi-\psi\rangle-\langle \widehat{w}_h, \Psi\cdot\bn\rangle-(\textbf{y},\pV\Psi).\quad {[\text{using }\eqref{Pj13}]}
\end{eqnarray*}
As $\widehat{w}_h$ is single valued, $\Psi \in H(div,\Omega)$ and $\widehat{w}_h=0$ on $\partial\Omega$, $\langle\widehat{w}_h,\Psi\cdot\bn\rangle=0$. Thus, an application of Theorem \ref{tt1} implies
\begin{eqnarray*}
(w_h,g)&=& (\bz,\pV \Psi) +\langle\tau(w_h-\widehat{w}_h),\pw \psi-\psi\rangle-(\textbf{y},\pV\Psi)\\
      &\leqslant& \|\bz\| |\Psi|_1+C\;(h_{k}^{3/2}\tau^{-1/2})\|w_h-\widehat{w}_h\|_+ \|\psi\|_2+\|\textbf{y}\| |\Psi|_1\\
      &\leqslant& C \big(\|\bz\|+\|w_h-\widehat{w}_h\|_++\|\textbf{y}\|\big)\|\psi\|_{2,q}\\
       &\leqslant& C \big(\|\bz\|+\|w_h-\widehat{w}_h\|_++\|\textbf{y}\|\big)\|g\|_{L^{q}}.
\end{eqnarray*}
This implies \[
\|w_h\|_{L^p}=\sup\limits_{\text{\footnotesize$\begin{array}{l}g\in L^{q}\\g\neq0\end{array}$}}\frac{(w_h,g)}{\|g\|_{L^q}}\le \mathrm{C} \big(\|\bz\|+\|w_h-\widehat{w}_h\|_++\|\textbf{y}\|\big).
\]
Now a use of Sobolev inequality when the dimension $d=2$ (see, \cite [page 47]{JT})
$$\|w_h\|_{L^{\infty}} \leqslant C\; \log \frac{1}{h}\; \|w_h\|_{L^p}$$
now completes the rest of the proof.
\end{proof}
\subsection{{\it A priori} Error Estimates}
This subsection deals with {\it a priori} error estimates of the semi-discrete method. 
Decompose, $\bE_u = \bs_u-\br_u,~\bE_\bq=\bs_\bq-\br_{\bq},~\widehat{\bE}_{u}=\widehat{\bs}_{u}-\widehat{\br}_{u}$, where $\bs_u = \pw u-u_h,~\bs_{\bq}=\pV \bq-\bq_h,~\widehat{\bs}_{u}=\mathcal{P}u-\widehat{u}_h$. Since the estimates of $\br_u,~ \br_{\bq},~ \widehat{\br}_{u}$ are known, it is enough to estimate $\bs_u, \bs_{\bq}$ and $\widehat{\bs}_{u}$. Using the HDG projections (\eqref{Pj11}-\eqref{Pj13}) and (\eqref{hdg1}-\eqref{hdg3}), equations in $\bs_u, ~\bs_\bq$ and $\widehat\bs_{u}$ are written  for all $(\bv_h,w_h,\mu_1, \mu_2)\in  \bV_h\times W_h\times M_h\times M_h$ as
\begin{subequations}\label{ereq}
\begin{eqnarray}
(\bs_\bq,\bv_h)+(\bs_u,\nabla\cdot\bv_h)-\langle \widehat{\bs}_u,\bv_h\cdot\bn\rangle&=&(\br_{\bq},\bv_h),\label{ee1}\\
(\bs_{u_{tt}},w_h)+(\bs_\bq,\nabla w_h)-\langle\widehat{\bs}_{\bq}\cdot\bn,w_h\rangle+(f(u)-f(u_h),w_h)
&=&(\br_{u_{tt}},w_h),\label{ee2}\\
\langle \widehat{\bs}_\bq\cdot\bn,\mu_1\rangle_{\mathcal{E}_h\setminus\Gamma_\partial}&=&0,\label{ee4}\\
\langle\widehat{\bs}_u,\mu_2\rangle_{\Gamma_\partial}&=&0,\label{ee5}
\end{eqnarray}
\end{subequations}
where the numerical trace for flux is defined on $\mathcal{E}_h$  as
\begin{align}\label{eflux}
    \widehat{\bs}_{\bq}\cdot\bn=\bs_{\bq}\cdot\bn-\tau(\bs_u-\widehat{\bs}_u).
\end{align}
Following \cite{cockburn2018stormer}, we now  define  initial approximations via the HDG formulation of the associated elliptic equation at $t=0$, i.e., seeking $(u_h(0),\bq_h(0),\widehat{u}_h(0))\in W_h\times \bV_h\times M_h$ as the solution of
\begin{subequations}\label{HDG0}
\begin{eqnarray}
(\bq_h(0),\bv_h)+(u_h(0),\nabla\cdot\bv_h)-\langle\widehat{u}_h(0),\bv_h\cdot\bn\rangle&=&0\quad\forall \bv_h\in\bV_h,\label{hdg10}\\
(\bq_h(0),\nabla w_h)-\langle\widehat{\bq}_h(0)\cdot\bn,w_h\rangle&=&(\Delta u_0, w_h)\quad\forall w_h\in W_h,\label{hdg20}\\
\langle \widehat{\bq}_h(0)\cdot\textbf{n},\mu\rangle_{\mathcal{E}_h\setminus\Gamma_\partial}&=&0\quad\forall\mu\in M_h,\label{hdg30}\\
\langle\widehat{u}_h(0),\mu\rangle_{\Gamma_\partial}&=& 0\quad\forall\mu\in M_h,\label{hdg40}\\
\widehat{\bq}_h(0)\cdot \bn=\bq_h(0)\cdot\bn&-&\tau(u_h(0)-\widehat{u}_h(0)) \text{ on } \;\mathcal{E}_h.\label{hdg50}
\end{eqnarray}
\end{subequations}
Moreover, the initial approximation  $u_{ht}(0)$ of the initial velocity is obtained by differentiating the HDG projection  \eqref{Pj1} concerning time and evaluating at $t=0$ using $(\pV (\nabla u_1), \pw u_1)$ of $(\nabla u_1, u_1).$

For our subsequent analysis, we use the following notation. For a Banach space $X$ with norm $\|\cdot\|_{X}$, we define 
$W^{m,p}(0,T; X),\; 1\leqslant p\leqslant \infty$  as the space of all strongly measurable and Bochner integrable functions $\phi: (0,T)\longrightarrow X$ such that  the corresponding norm  $\|\phi\|_{W^{m,p}(0,T;X)}$ is finite, that is, for $p\in [1,\infty)$  
$$\|\phi\|_{W^{m,p}(0,T;X)}:= \Big(\sum_{j=0}^m \int_{0}^T \|\frac{d^j \phi}{dt^j} (s)\|^p_{X}\;ds\Big)^{1/p}$$
and for $p=\infty$
$$\|\phi\|_{W^{m,\infty}(0,T;X)}:= \max_{0\leqslant j\leqslant m} \esssup_{0<t<T} \|\frac{d^j \phi}{dt^j}(t) \|_{X}.$$

The following Lemma is useful for our subsequent use.
\begin{lemma}\label{IC-approx-1} 
Let  $u_{h0}$ be defined by HDG elliptic problem \eqref{HDG0} and $u_{h1}= \pw u_1.$  Then, there holds
\begin{equation}\label{estimate:IC}
\Big( \|\bs_{\bq}(0)\|^2 + \|(\bs_{u}-\widehat{\bs}_{u})(0)\|_+^2\Big)^{1/2} \leqslant C\;h^{k+1} \;\|u_0\|_{H^{k+2}}.
\end{equation}
Moreover, an additional  elliptic $H^2\cap H^1_0$- regularity  result of the associated adjoint problem in a duality argument  as in \cite[Lemma 4.1]{cockburn2010projection}  shows  
\begin{equation}\label{estimate:IC-0}
\|\bs_{u}(0)\| \leqslant C\;h^{k+2} \;\|u_0\|_{H^{k+2}}.
\end{equation}
\end{lemma}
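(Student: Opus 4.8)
\textbf{Proof plan for Lemma \ref{IC-approx-1}.}

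The plan is to analyze the error equations that the initial data satisfies and then combine an energy-type argument with a duality argument, exactly in the spirit of the elliptic HDG projection analysis of \cite{cockburn2010projection}. First I would write down the error system at $t=0$. Subtracting the HDG elliptic system \eqref{HDG0} from the corresponding projection relations \eqref{Pj1}, I obtain equations for $\bs_{\bq}(0)=\pV\bq(0)-\bq_h(0)$, $\bs_u(0)=\pw u_0 - u_h(0)$ and $\widehat{\bs}_u(0)=\mathcal{P}u_0-\widehat{u}_h(0)$ of the form \eqref{ee1}--\eqref{ee5} evaluated at $t=0$, but with the time-derivative term replaced: instead of $(\bs_{u_{tt}},w_h)$ the second equation carries the consistency term coming from $\nabla\cdot\pV\bq(0) = \Pi_{k-1}(\nabla\cdot\bq(0))$ versus the right-hand side $\Delta u_0 = \nabla\cdot\bq(0)$, which is controlled by the projection property \eqref{Pj12} and Lemma \ref{tt1}. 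The upshot is an elliptic error system with right-hand side of size $O(h^{k+1})$.

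Next I would carry out the energy estimate. Choosing $\bv_h=\bs_{\bq}(0)$ in the first error equation and $w_h=\bs_u(0)$ in the second, adding, and using the definition \eqref{eflux} of the numerical trace together with \eqref{ee4}--\eqref{ee5} to handle the boundary terms, the volume coupling terms $(\bs_u,\nabla\cdot\bs_{\bq})$ and $(\bs_{\bq},\nabla\bs_u)$ cancel against the trace contributions and produce the nonnegative stabilization term $\|(\bs_u-\widehat{\bs}_u)(0)\|_+^2$. This gives an identity of the form
\begin{equation*}
\|\bs_{\bq}(0)\|^2 + \|(\bs_u-\widehat{\bs}_u)(0)\|_+^2 = (\br_{\bq}(0),\bs_{\bq}(0)) + (\text{consistency term},\bs_u(0)).
\end{equation*}
Bounding the right-hand side by Cauchy--Schwarz and Young's inequality, absorbing $\|\bs_{\bq}(0)\|$ on the left, and using the projection estimates of Lemma \ref{tt1} to bound $\|\br_{\bq}(0)\|$ and the consistency term by $C\,h^{k+1}\|u_0\|_{H^{k+2}}$, yields \eqref{estimate:IC}.

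Finally, for the sharper $L^2$ bound \eqref{estimate:IC-0} I would run a duality (Aubin--Nitsche) argument. Introduce the adjoint elliptic problem $-\Delta\phi=\bs_u(0)$ with homogeneous Dirichlet data, invoke the assumed $H^2\cap H^1_0$ regularity $\|\phi\|_{H^2}\le C\|\bs_u(0)\|$, and test the error equations against the HDG projections of $(\nabla\phi,\phi)$. Using the commutativity of the HDG projection and \eqref{Pj13} to dispose of the face terms, $\|\bs_u(0)\|^2$ is expressed as a sum of products of the already-estimated quantities $\|\bs_{\bq}(0)\|$, $\|(\bs_u-\widehat{\bs}_u)(0)\|_+$ with projection errors of $\phi$, each of which carries an extra power of $h$; this is precisely the mechanism in \cite[Lemma 4.1]{cockburn2010projection}. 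Collecting the $O(h)$ gain against the $O(h^{k+1})$ bound from \eqref{estimate:IC} delivers the $O(h^{k+2})$ estimate. The main obstacle I anticipate is the careful bookkeeping of the face/trace terms so that the numerical-trace definition \eqref{eflux} is used consistently and the adjoint argument actually gains a full power of $h$ rather than stalling at $h^{k+1}$; this hinges on the superconvergence structure built into the HDG projection \eqref{Pj13}.
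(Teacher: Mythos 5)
Your plan follows essentially the same route as the paper: the paper's proof simply records the error system \eqref{HDG1} at $t=0$ and then cites \cite{cockburn2010projection} (and \cite[Lemma 3.6]{cockburn2018stormer}) for the energy estimate \eqref{estimate:IC} and the Aubin--Nitsche duality argument of \cite[Lemma 4.1, Theorem 4.1]{cockburn2010projection} for \eqref{estimate:IC-0}, which is exactly the energy-plus-duality scheme you describe.

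One point in your write-up needs care, though. You assert that the second error equation carries a consistency term of the form $\Pi_{k-1}(\nabla\cdot\bq(0))-\nabla\cdot\bq(0)$ and that the resulting contribution $(\text{consistency term},\bs_u(0))$ can be handled by Cauchy--Schwarz and absorption. But the left-hand side of your energy identity contains only $\|\bs_{\bq}(0)\|^2$ and $\|(\bs_u-\widehat{\bs}_u)(0)\|_+^2$, not $\|\bs_u(0)\|^2$, so a term paired with $\bs_u(0)$ cannot be absorbed there; as written that step would stall. The resolution is that with the HDG projection this term is identically zero: $(\pV\bq,\nabla w_h)=(\bq,\nabla w_h)$ by \eqref{Pj11} since $\nabla w_h$ has degree $k-1$, and the trace contributions cancel by \eqref{Pj13} together with the $L^2$-orthogonality of $\mathcal{P}$ on each face, which is precisely why the paper's equation \eqref{hdg21} has zero right-hand side. (Alternatively, if a genuine $O(h^{k+1})$ residual paired with $\bs_u(0)$ were present, you would first need the discrete stability bound of Lemma \ref{ll2} to dominate $\|\bs_u(0)\|$ by $\|\bs_{\bq}(0)\|+\|(\bs_u-\widehat{\bs}_u)(0)\|_+$ before absorbing.) With that correction the energy step closes with only $(\br_{\bq}(0),\bs_{\bq}(0))$ on the right, Lemma \ref{tt1} gives \eqref{estimate:IC}, and your duality argument for \eqref{estimate:IC-0} is the intended one.
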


\begin{proof}
Using HDG projections \eqref{Pj1} at $t=0$ and \eqref{HDG0}, we now arrive at
error equations at $t=0$ as
\begin{subequations}\label{HDG1}
\begin{eqnarray}
(\bs_{\bq}(0),\bv_h)+(\bs_u(0),\nabla\cdot\bv_h)-\langle\widehat{\bs}_u(0),\bv_h\cdot\bn\rangle&=&0\quad\forall \bv_h\in\bV_h,\label{hdg11}\\
(\bs_\bq(0),\nabla w_h)-\langle\widehat{\bs}_\bq(0)\cdot\bn,w_h\rangle&=&0\quad\forall w_h\in W_h,\label{hdg21}\\
\langle \widehat{\bs}_\bq(0)\cdot\textbf{n},\mu\rangle_{\mathcal{E}_h\setminus\Gamma_\partial}&=&0\quad\forall\mu\in M_h,\label{hdg31}\\
\langle\widehat{\bs}_u(0),\mu\rangle_{\Gamma_\partial}&=&0\quad\forall\mu\in M_h,\label{hdg41}\\
  \widehat{\bs}_\bq(0)\cdot \bn=\bs_\bq(0)\cdot\bn&-&\tau(\bs_u(0)-\widehat{\bs}_u(0))\;\;\;\mbox{on}\;\mathcal{E}_h.\label{erflux}
\end{eqnarray}
\end{subequations}
Now, a use of  arguments in \cite{cockburn2010projection} (see also \cite[Lemma 3.6]{cockburn2018stormer}, completes 
estimate \eqref{estimate:IC}. In order to prove \eqref{estimate:IC-0}, we appeal to the Aubin-Nitsche duality argument as in 
\cite[Lemma 4.1, Theorem 4.1]{cockburn2010projection} to conclude the remaining part of the proof.
\end{proof}

By using the semidiscrete formulation \eqref{ereq}, below, we estimate $\|\bs_{u_t}\|$ and $\|\bs_{\bq}\|.$  

\begin{lemma}\label{ll3} 
With $u_{h0}$ defined by HDG elliptic problem \eqref{HDG0}, $u_{h1}= \pw u_1,$ and $u\in W^{2,1}(0,T;H^{k+2}),$  there exists a positive constant $C$ independent of $h$ such that
\begin{equation}
 \begin{split}
\Big(\|\bs_{\bq}\|^2+\|\theta_{u_{t}}\|^2+\|\bs_u-\widehat{\bs}_u\|_+^2\Big)^{1/2}\leqslant C\;h^{k+1} \Big( \|u_0\|_{H^{k+2}}
+ \|u\|_{W^{2,1}(0,T;H^{k+2})} \Big)\label{ll31}. \end{split}
\end{equation}

Moreover, there holds
 for  $p\in [2,\infty)$
 \begin{equation}\label{estimate:p0}
 \|\theta_{u}(t)\|_{L^p} \leqslant C\;h^{k+1} \Big( \|u_0\|_{H^{k+2}}
+ \|u\|_{W^{2,1}(0,T;H^{k+2})} \Big), 
 \end{equation}
 and  for $p=\infty$
  \begin{equation}\label{estimate:infty0}
 \|\theta_{u}(t)\|_{L^{\infty} }\leqslant C\;\log \frac{1}{h}\;h^{k+1} \Big( \|u_0\|_{H^{k+2}}
+ \|u\|_{W^{2,1}(0,T;H^{k+2})} \Big).
 \end{equation}

\end{lemma}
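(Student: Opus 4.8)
The plan is to prove the energy-type estimate \eqref{ll31} first, then derive \eqref{estimate:p0} and \eqref{estimate:infty0} as immediate corollaries via Lemma~\ref{ll2}. To obtain \eqref{ll31}, I would differentiate the first error equation \eqref{ee1} in time, then test the three error equations with the natural choices that mimic the energy identity of Theorem~\ref{2.1}. Specifically, I would take $\bv_h=\bs_\bq$ in the time-differentiated \eqref{ee1}, $w_h=\theta_{u_t}=\bs_{u_t}$ in \eqref{ee2}, and use the flux relation \eqref{eflux} together with \eqref{ee4}--\eqref{ee5}. Adding the resulting equations and exploiting the integration-by-parts cancellation $(\bs_{u_t},\nabla\cdot\bs_\bq)+(\bs_\bq,\nabla\bs_{u_t})=\langle\bs_{u_t},\bs_\bq\cdot\bn\rangle$ (exactly as in \eqref{pf1-2}--\eqref{pf1-3}) should produce
\begin{equation*}
\frac{1}{2}\frac{d}{dt}\Big(\|\theta_{u_t}\|^2+\|\bs_\bq\|^2+\|\bs_u-\widehat{\bs}_u\|_+^2\Big)=(\br_{\bq_t},\bs_\bq)+(\br_{u_{tt}},\theta_{u_t})-(f(u)-f(u_h),\theta_{u_t}),
\end{equation*}
up to sign bookkeeping on the projection right-hand sides.

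Next I would integrate this identity from $0$ to $t$ and bound each term on the right. The two projection terms $(\br_{\bq_t},\bs_\bq)$ and $(\br_{u_{tt}},\theta_{u_t})$ are handled by Cauchy--Schwarz followed by Lemma~\ref{tt1}, giving factors of $h^{k+1}$ times the appropriate Sobolev seminorms of $u$, which after a Gronwall-type argument assemble into the stated $\|u\|_{W^{2,1}(0,T;H^{k+2})}$ norm; the $W^{2,1}$ (rather than $W^{2,2}$) structure arises precisely because I would estimate $\int_0^t\|\br_{u_{tt}}\|\,\|\theta_{u_t}\|\,ds\le \sup_s\|\theta_{u_t}\|\int_0^t\|\br_{u_{tt}}\|\,ds$ and absorb the supremum. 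The initial data are controlled by Lemma~\ref{IC-approx-1}: \eqref{estimate:IC} bounds $\|\bs_\bq(0)\|$ and $\|(\bs_u-\widehat{\bs}_u)(0)\|_+$, while $\theta_{u_t}(0)=0$ by the choice $u_{h1}=\pw u_1$ combined with the definition of $u_{ht}(0)$ via the time-differentiated projection.

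The main obstacle will be the nonlinear term $(f(u)-f(u_h),\theta_{u_t})$, since $f$ is only locally Lipschitz and one cannot invoke a global Lipschitz constant. My strategy is to write $u-u_h=(u-\pw u)+\theta_u=\br_u+\theta_u$ (taking care with the sign convention $\theta_u=\pw u-u_h$), so that $f(u)-f(u_h)=f(u)-f(u_h)$ is estimated by the mean-value form $|f(u)-f(u_h)|\le L(|u|,|u_h|)\,|u-u_h|$ where the local Lipschitz factor $L$ depends on $\|u\|_{L^\infty}$ and $\|u_h\|_{L^\infty}$. Here the a priori bound from Corollary~\ref{coro}, specifically the uniform control of $\|u_h\|_{L^4}$, is what tames $L$: for the cubic nonlinearity $f(s)=s^3-s$ one gets $|f(u)-f(u_h)|\lesssim (1+u^2+u_h^2)|u-u_h|$, and the quadratic factors are absorbed using the $L^4$ bounds via Hölder's inequality, so that $|(f(u)-f(u_h),\theta_{u_t})|\lesssim(\|\br_u\|+\|\theta_u\|)\|\theta_{u_t}\|$ up to constants depending on the a priori bounds. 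Bounding $\|\theta_u\|$ itself requires a further device, since the energy identity controls only $\theta_{u_t}$; I would close the loop by writing $\|\theta_u(t)\|\le\|\theta_u(0)\|+\int_0^t\|\theta_{u_t}\|\,ds$ (with $\|\theta_u(0)\|=\|\bs_u(0)\|$ bounded by \eqref{estimate:IC-0}) and feeding this back into a Gronwall inequality. Once \eqref{ll31} is established, \eqref{estimate:p0} and \eqref{estimate:infty0} follow by applying Lemma~\ref{ll2} with $w_h=\theta_u$, $\widehat w_h=\widehat{\bs}_u$, $\bz=\bs_\bq$ and $\textbf{y}=\br_\bq$ (so that \eqref{ee1} matches the hypothesis \eqref{p1}), whence $\|\theta_u\|_{L^p}\lesssim\|\bs_\bq\|+\|\bs_u-\widehat{\bs}_u\|_++\|\br_\bq\|$ and the right-hand side is already bounded by \eqref{ll31} together with Lemma~\ref{tt1}.
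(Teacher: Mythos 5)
Your overall route coincides with the paper's: the same time-differentiated energy identity, the same initial-data treatment via Lemma~\ref{IC-approx-1} and $\theta_{u_t}(0)=0$, and the same final application of Lemma~\ref{ll2} to the error equation \eqref{ee1} to get \eqref{estimate:p0}--\eqref{estimate:infty0}. However, there is a genuine gap in your treatment of the nonlinear term. You claim $|(f(u)-f(u_h),\theta_{u_t})|\lesssim(\|\br_u\|+\|\theta_u\|)\|\theta_{u_t}\|$ with the quadratic factors ``absorbed using the $L^4$ bounds via H\"older's inequality,'' and you propose to control $\|\theta_u\|$ through $\|\theta_u(t)\|\le\|\theta_u(0)\|+\int_0^t\|\theta_{u_t}\|\,ds$. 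Neither step closes. The problematic contribution is $\int_\Omega u_h^2\,|u-u_h|\,|\theta_{u_t}|$: with $\theta_{u_t}$ measured in $L^2$, H\"older forces something like $\|u_h\|_{L^6}^2\|\theta_u\|_{L^6}\|\theta_{u_t}\|_{L^2}$ (exponents $\tfrac13+\tfrac16+\tfrac12=1$), so you need $\theta_u$ in $L^6$, not $L^2$; the integral device $\|\theta_u(t)\|\le\|\theta_u(0)\|+\int_0^t\|\theta_{u_t}\|\,ds$ only yields the $L^2$ norm, and $u_h\in L^4$ uniformly (Corollary~\ref{coro}) is by itself insufficient for the exponent bookkeeping — one needs $\|u_h\|_{L^6}\le C$, which follows from Lemma~\ref{ll2} applied to \eqref{hdg1} combined with Corollary~\ref{coro}, not from the $L^4$ bound directly.

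The repair is exactly the ingredient you already invoke at the end but deploy too late: apply Lemma~\ref{ll2} to the error equation \eqref{ee1} \emph{inside} the nonlinear estimate, giving $\|\theta_u\|_{L^6}\le C\big(\|\bs_\bq\|+\|\bs_u-\widehat{\bs}_u\|_{+}+\|\br_\bq\|\big)$ at each time. This bounds $\|f(\pw u)-f(u_h)\|$ by quantities already present in the energy (plus projection errors), after which Gr\"onwall closes the argument as in the paper; the remaining piece $f(u)-f(\pw u)$ is handled by the mean value theorem and the $L^\infty$ boundedness of $\pw u$, as you indicate. With that substitution your proof is essentially the paper's.
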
 

\begin{proof}
 For estimating $\|\theta_{u_t}\|$, differentiate 
 \eqref{ee1} with respect to $t$ and then, choose $\bv_h:=\bs_{\bq}$ and $w_h:=\theta_{u_t}$ in \eqref{ee2} to obtain
 \begin{equation}
 \begin{split}
 \frac{1}{2}\frac{d}{dt}\left(\|\bs_{\bq}\|^2+\|\theta_{u_{t}}\|^2+\|\bs_u-\widehat{\bs}_u\|_+^2\right)+(f(u)-f(u_h),\theta_{u_{t}})=(\br_{\bq_{t}},\bs_{\bq})+(\br_{u_{tt}},\theta_{u_{t}}). \label{th6}
 \end{split}
 \end{equation}
 With the help of the Cauchy-Schwarz inequality, it follows that
 \begin{align*}
      \frac{d}{dt}\left(\|\bs_{\bq}\|^2+\|\theta_{u_{t}}\|^2+\|\bs_u-\widehat{\bs}_u\|_+^2\right)&\leqslant \Big(\|f(u)-f(u_h)\| +\|\br_{\bq_{t}}\|+\|\br_{u_{tt}}\|\Big)\Big(\|\bs_{\bq}\|^2+\|\theta_{u_{t}}\|^2+\|\bs_u-\widehat{\bs}_u\|_+^2\Big)^{1/2}, \label{th7}
 \end{align*}
and  hence,
 \begin{equation}
 \begin{split}
 \frac{d}{dt}\left(\|\bs_{\bq}\|^2+\|\theta_{u_{t}}\|^2+\|\bs_u-\widehat{\bs}_u\|_+^2\right)^{1/2} \leqslant \|f(u)-f(u_h)\| +\|\br_{\bq_{t}}\|+\|\br_{u_{tt}}\|. \label{th8}
 \end{split}
 \end{equation}
With the help of HDG projection, we split the nonlinear terms into two parts.  Since  $f(u)=u^3-u$, using the mean value theorem, for some $\lambda\in (0,1)$ and the Cauchy-Schwarz inequality
\begin{eqnarray}
  \| f(u)-f(\pw u)\| &\leqslant& C\|1+(\pw u)^2+u^2\|_{L^\infty}\|\br_u\|\nonumber\\
  &\leqslant& C\;\|\br_u\|. \label{th9}
  \end{eqnarray}
Here, we have used boundedness of $\pw u \in L^{\infty}(\Omega) $ as $u\in L^{\infty}(\Omega)$ is bounded and  
using an inverse estimate and Theorem-\ref{tt1}. 
Now, a use of Holder's inequality with Lemma \ref{ll2} shows
 \begin{align*}
  \|f(\pw u)-f(u_h)\|^2 &\leqslant C\int_{\Omega}(1+|\pw u|^4+|u_h|^4)|\theta_u|^2ds\\
                     &\leqslant C\;\big((1+\|\pw u\|_{L^{6}}^4+\|u_h\|_{L^{6}}^4)\|\theta_u\|_{L^{6}}^2\big)\\
                &\leqslant C\;\big((1+\|\bq_h\|^4 + \|u_h-\widehat{u}_h\|_+^4)(\|\theta_{\bq}\|^2 + \|\theta_h-\widehat{\theta}_h\|_++\|\br_{\bq}\|^2)\big).
  \end{align*}
 Now, a direct consequence of Corollary 2.1 shows that $1+\|\bq_h\|^4 + \|u_h-\widehat{u}_h\|_+^4\leqslant C.$ 
 Hence,
 \begin{align}
 \|f(\pw u)-f(u_h)\|^2 &\leqslant C\;(\|\theta_{\bq}\|^2 + \|\theta_u-\widehat{\theta}_u\|_+^2+\|\br_{\bq}\|^2). \label{th10}
 \end{align}
 On combining \eqref{th9} and \eqref{th10}, there holds
\begin{align}
 \|f(u)-f(u_h)\|^2 &\leqslant C\;(\|\br_u\|^2+\|\theta_{\bq}\|^2 + \|\theta_u-\widehat{\theta}_u\|_+^2+\|\br_{\bq}\|^2). \label{th131}
 \end{align} 
 A use of inequality \eqref{th131} in \eqref{th8} yields
  \begin{align*}
  \frac{d}{dt}\left(\|\bs_{\bq}\|^2+\|\theta_{u_{t}}\|^2+\|\bs_u-\widehat{\bs}_u\|_+^2\right)^{1/2}&\leqslant C\;\big(\|\br_u\|+\|\br_{\bq}\| +\|\br_{\bq_{t}}\|+\|\br_{u_{tt}}\|\\&+\|\theta_{\bq}\| + \|\theta_u-\widehat{\theta}_u\|_+\big).
 \end{align*}
 After integrating the above inequality and noting that $\theta_{u_{t}}(0)=0,$ we use Gr\"{o}nwall's lemma to arrive at
  \begin{eqnarray}\label{1111}
 && \left(\|\bs_{\bq}\|^2+\|\theta_{u_{t}}\|^2+\|\bs_u-\widehat{\bs}_u\|_+^2\right)^{1/2} \leqslant  \Big( \left(\|\bs_{\bq}(0)\|^2+\|(\bs_u-\widehat{\bs}_u)(0)\|_+^2\right)^{1/2} \nonumber\\
 &&\;\;\;\;\;\;\;\;\;\;\;\;\;\;\;+  \int_{0}^{t} \big( \|\br_u\|+ \|\br_{\bq}\| +\|\br_{\bq_{t}}\|+\|\br_{u_{tt}}\|\big)\;ds\Big) \exp{(Ct)}.
 \end{eqnarray}
 A use of  Lemma~\ref{tt1} with Lemma~\ref{IC-approx-1} completes the  estimate \eqref{ll31}.\\
  A use of Lemma \ref{ll2} for the  error equation \eqref{ee1} shows that  for  $p\in [2,\infty)$
 \begin{equation}\label{estimate:p}
 \|\theta_{u}(t)\|_{L^p} \leqslant C\;\left(\|\bs_{\bq}\|+\|\bs_u-\widehat{\bs}_u\|_{+} +\|\br_{\bq}\| \right),
 \end{equation}
 and  for $p=\infty$
  \begin{equation}\label{estimate:infty}
 \|\theta_{u}(t)\|_{L^{\infty} }\leqslant C\;\log \frac{1}{h}\;\left(\|\bs_{\bq}\|+\|\bs_u-\widehat{\bs}_u\|_{+}+\|\br_{\bq}\| \right).
 \end{equation}
 Then, a use of \eqref{ll31} completes the rest of the proof.
 \end{proof}
 \begin{lemma}\label{lem34} Under the assumptions in Lemma~\ref{ll3} and with additional assumption $u_{ttt}\in L^1(0,T;H^{k+2}),$
there holds for a positive constant $C$ independent of $h$: 
\begin{eqnarray}
\Big(\|\bs_{\bq_t}\|^2 +\|\theta_{u_{tt}}\|^2+\|\bs_{u_t}-\widehat{\bs}_{u_t}\|_+^2\Big)^{1/2} \leqslant 
C\;h^{k+1} \Big( \|u_0\|_{H^{k+2}} + \|u_1\|_{H^{k+2}} + \|u\|_{W^{3,1}(0,T;H^{k+2})} \Big). \label{ll32}
\end{eqnarray}
\end{lemma}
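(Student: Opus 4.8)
The plan is to run the energy argument of Lemma~\ref{ll3} one time-derivative higher. Concretely, I would differentiate the error equation \eqref{ee1} twice in $t$ and test with $\bv_h=\bs_{\bq_t}$, differentiate \eqref{ee2} once in $t$ and test with $w_h=\theta_{u_{tt}}=\bs_{u_{tt}}$, and add the two. The leading terms $(\bs_{\bq_{tt}},\bs_{\bq_t})$ and $(\bs_{u_{ttt}},\bs_{u_{tt}})$ give $\tfrac12\frac{d}{dt}(\|\bs_{\bq_t}\|^2+\|\theta_{u_{tt}}\|^2)$, while the coupling terms $(\bs_{u_{tt}},\nabla\cdot\bs_{\bq_t})+(\bs_{\bq_t},\nabla\bs_{u_{tt}})$ together with the boundary contributions, after integration by parts and use of the once- and twice-differentiated constraints \eqref{ee4}--\eqref{ee5} and the differentiated numerical flux \eqref{eflux}, collapse exactly as in Theorem~\ref{2.1} to $\tfrac12\frac{d}{dt}\|\bs_{u_t}-\widehat{\bs}_{u_t}\|_+^2$ (the mechanism is identical to the identity $\langle\bs_{u_t}-\widehat{\bs}_{u_t},\bs_{\bq}\cdot\bn-\widehat{\bs}_{\bq}\cdot\bn\rangle=\tfrac12\frac{d}{dt}\|\bs_u-\widehat{\bs}_u\|_+^2$ used in Lemma~\ref{ll3}). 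This produces the energy identity
\begin{equation*}
\frac{1}{2}\frac{d}{dt}\Big(\|\bs_{\bq_t}\|^2+\|\theta_{u_{tt}}\|^2+\|\bs_{u_t}-\widehat{\bs}_{u_t}\|_+^2\Big)+\big((f(u)-f(u_h))_t,\theta_{u_{tt}}\big)=(\br_{\bq_{tt}},\bs_{\bq_t})+(\br_{u_{ttt}},\theta_{u_{tt}}).
\end{equation*}

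Writing $E=\|\bs_{\bq_t}\|^2+\|\theta_{u_{tt}}\|^2+\|\bs_{u_t}-\widehat{\bs}_{u_t}\|_+^2$ and applying Cauchy--Schwarz to the right-hand side reduces this to $\frac{d}{dt}E^{1/2}\leqslant \|(f(u)-f(u_h))_t\|+\|\br_{\bq_{tt}}\|+\|\br_{u_{ttt}}\|$, mirroring \eqref{th8}. The projection quantities $\|\br_{\bq_{tt}}\|,\|\br_{u_{ttt}}\|$ integrate to $O(h^{k+1})$ by Lemma~\ref{tt1} under $u\in W^{3,1}(0,T;H^{k+2})$ (this is precisely where $u_{ttt}\in L^1(0,T;H^{k+2})$ is needed), so everything hinges on the $L^2$-norm of $(f(u)-f(u_h))_t$.

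Estimating this nonlinear increment is the main obstacle, since $(f(u)-f(u_h))_t=3\big(u^2u_t-u_h^2u_{ht}\big)-(u_t-u_{ht})$ now carries a velocity factor. I would split the cubic part as $u^2u_t-u_h^2u_{ht}=u^2(u_t-u_{ht})+(u-u_h)(u+u_h)u_{ht}$. The first summand and the linear term are controlled by $\|u_t-u_{ht}\|\leqslant\|\br_{u_t}\|+\|\theta_{u_t}\|=O(h^{k+1})$ (Lemma~\ref{tt1} and Lemma~\ref{ll3}) together with $\|u\|_{L^\infty}$. For the second summand, a Hölder estimate with exponents $(6,6,6)$ gives $\|(u-u_h)(u+u_h)u_{ht}\|\leqslant C\|u-u_h\|_{L^6}\|u+u_h\|_{L^6}\|u_{ht}\|_{L^6}$, where $\|u+u_h\|_{L^6}$ is bounded (since $\|u_h\|_{L^6}\leqslant\|\pw u\|_{L^6}+\|\theta_u\|_{L^6}$ is bounded by \eqref{estimate:p0}) and $\|u-u_h\|_{L^6}\leqslant\|\br_u\|_{L^6}+\|\theta_u\|_{L^6}=O(h^{k+1})$ again by \eqref{estimate:p0}. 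The factor $\|u_{ht}\|_{L^6}$ I would bound by applying Lemma~\ref{ll2} to the once-differentiated equation \eqref{ee1}, which has exactly the form \eqref{p1} with $w_h=\bs_{u_t}$, $\bz=\bs_{\bq_t}$, $\textbf{y}=\br_{\bq_t}$, yielding $\|u_{ht}\|_{L^6}\leqslant C\big(1+\|\bs_{\bq_t}\|+\|\bs_{u_t}-\widehat{\bs}_{u_t}\|_++\|\br_{\bq_t}\|\big)$.

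The delicate point is the apparent circularity: this bound for $\|u_{ht}\|_{L^6}$ reintroduces the energy quantities $\|\bs_{\bq_t}\|$ and $\|\bs_{u_t}-\widehat{\bs}_{u_t}\|_+$ that we are trying to control. They are, however, multiplied by the small factor $\|u-u_h\|_{L^6}=O(h^{k+1})$, so their contribution to the differential inequality is only $Ch^{k+1}E^{1/2}$, which Grönwall's lemma absorbs via a bounded exponential factor; the remaining pieces are genuine integrable $O(h^{k+1})$ data. (Note that one must resist bounding $\|u_{ht}\|_{L^6}$ by an inverse estimate from $\|u_{ht}\|$, as that would degrade the rate to $O(h^{k+1/3})$.) It then remains to estimate $E(0)^{1/2}$ as in Lemma~\ref{IC-approx-1}: the choice $u_{h1}=\pw u_1$ gives $\bs_{u_t}(0)=0$, the HDG-projection initialization of the velocity flux controls $\bs_{\bq_t}(0)$ and $\widehat{\bs}_{u_t}(0)$, and $\theta_{u_{tt}}(0)$ is estimated from \eqref{ee2} at $t=0$ using $u_{tt}(0)=\Delta u_0-f(u_0)$, producing the $\|u_0\|_{H^{k+2}}+\|u_1\|_{H^{k+2}}$ contributions. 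Integrating in time and invoking Grönwall's inequality then delivers \eqref{ll32}.
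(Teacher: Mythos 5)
Your proposal is correct in its skeleton, and that skeleton coincides with the paper's: the same twice-differentiated energy identity, the same Cauchy--Schwarz reduction to $\frac{d}{dt}E^{1/2}\leqslant\|f_t(u)-f_t(u_h)\|+\|\br_{\bq_{tt}}\|+\|\br_{u_{ttt}}\|$, the same initial-data estimates ($\bs_{u_t}(0)=0$ from $u_{h1}=\pw u_1$, the bound \eqref{qt-0} for $\bs_{\bq_t}(0)$ and the trace term, and \eqref{ab} for $\theta_{u_{tt}}(0)$ obtained from \eqref{ee2} at $t=0$ together with the elliptic initialization), and Gr\"{o}nwall at the end. Where you genuinely diverge is the nonlinear increment. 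The paper splits through the projection, $f_t(u)-f_t(\pw u)$ plus $f_t(\pw u)-f_t(u_h)$, arranged so that every non-discrete factor ($\br_u$, $\br_{u_t}$) sits in $L^2$ and the remaining factors sit in $L^\infty$ (see \eqref{1112}--\eqref{1113}); the resulting right-hand side involves only quantities already controlled by Lemma~\ref{ll3}, so no circularity arises. You instead split $u^2u_t-u_h^2u_{ht}=u^2(u_t-u_{ht})+(u-u_h)(u+u_h)u_{ht}$, use H\"{o}lder with exponents $(6,6,6)$, and bound $\|u_{ht}\|_{L^6}$ via Lemma~\ref{ll2} applied to the once-differentiated \eqref{ee1}, absorbing the reintroduced $E^{1/2}$ by Gr\"{o}nwall since its coefficient is $O(h^{k+1})$. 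That absorption argument is sound and has the advantage of never invoking the $\log(1/h)$ Sobolev estimate.

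The one step you should repair is the claim $\|u-u_h\|_{L^6}\leqslant\|\br_u\|_{L^6}+\|\theta_u\|_{L^6}=O(h^{k+1})$ ``by \eqref{estimate:p0}'': that estimate controls only the discrete part $\theta_u$. The projection error $\br_u=u-\pw u$ is bounded in Lemma~\ref{tt1} only in $L^2$, and a crude global inverse estimate would degrade the $L^6$ rate. Either establish the element-wise $L^6$ analogue of Lemma~\ref{tt1} (triangle inequality against a best local $L^6$ polynomial approximant plus a local inverse estimate, using $H^{k+2}\hookrightarrow W^{k+1,6}$ in two dimensions), or --- simpler, and closer to what the paper does --- redistribute the H\"{o}lder exponents so that $\br_u$ stays in $L^2$ and only the discrete or smooth factors carry the higher integrability, e.g. $\|(u-u_h)(u+u_h)u_{ht}\|\leqslant\|\br_u\|\,\|(u+u_h)u_{ht}\|_{L^\infty}+\|\theta_u\|_{L^6}\|u+u_h\|_{L^6}\|u_{ht}\|_{L^6}$, where the $L^\infty$ factor is handled by \eqref{estimate:infty0} and the log-Sobolev bound (the $\log(1/h)$ it introduces is multiplied by $\|\br_u\|=O(h^{k+1})$ and is therefore harmless in Gr\"{o}nwall). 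With that repair your argument closes and yields \eqref{ll32}.
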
 
\begin{proof}
  In order to prove \eqref{ll32}, now differentiate \eqref{ee1} twice with respect to time and differentiate \eqref{ee2}-\eqref{ee5} and then choose $\bv_h:=\bs_{\bq_t}$ and $w_h:=\theta_{u_{tt}}$ to obtain
  \begin{equation*}
 \begin{split}
 \frac{1}{2}\frac{d}{dt}\left(\|\bs_{\bq_t}\|^2+\|\theta_{u_{tt}}\|^2+\|\bs_{u_t}-\widehat{\bs}_{u_t}\|_+^2\right)+(f_{t}(u)-f_{t}(u_h),\theta_{u_{tt}})&=(\br_{\bq_{tt}},\bs_{\bq_t})+(\br_{u_{ttt}},\theta_{u_{tt}}). 
 \end{split}
 \end{equation*}
 A use of  the Cauchy-Schwarz inequality shows
 \begin{equation*}
 \begin{split}
 \frac{d}{dt}\left(\|\bs_{\bq_t}\|^2+\|\theta_{u_{tt}}\|^2+\|\bs_{u_t}-\widehat{\bs}_{u_t}\|_+^2\right)^{1/2}\leqslant \Big(\|f_{t}(u)-f_{t}(u_h)\|+\|(\br_{\bq_{tt}})\|+\|(\br_{u_{ttt}})\|\Big)  
 \end{split}
 \end{equation*}
 Integrating 
 from $0$ to $t$ for $t\in (0,T]$, we arrive at
\begin{eqnarray}
\Big(\|\bs_{\bq_t}\|^2+\|\theta_{u_{tt}}\|^2+\|\bs_{u_t}-\widehat{\bs}_{u_t}\|_+^2\Big)^{1/2}&\leqslant&
\Big(\|\bs_{\bq_t}(0)\|^2+\|\theta_{u_{tt}}(0)\|^2+\|(\bs_{u_t}-\widehat{\bs}_{u_t})(0)\|_+^2\Big)^{1/2}\nonumber\\
&+& C \int_0^t\big(\|\br_{\bq_{tt}}\|+\|\br_{u_{ttt}}\| +\|f_{t}(u)-f_{t}(u_h)\|\big)ds. \label{th11}
\end{eqnarray}
Since $f(u)=u^3-u$, there holds
 \begin{align}
 f_{t}(u)-f_{t}(\pw u) 
 &=3(u^2u_t-(\pw u)^2 \pw u_t)-(u_t-\pw u_t)\nonumber\\
 &=(3u^2-1)(u_t-\pw u_t)+ 3\pw u_t(u^2-(\pw u)^2)\nonumber
 \end{align}
and using the boundedness of  $\pw u$ and $\pw u_t$ in $L^{\infty}(\Omega),$ it follows that
 \begin{align}
 \|f_{t}(u)-f_{t}(\pw u) &\|\leqslant  \|3u^2+1\|_{L^{\infty}}\|u_t-\pw u_t\|+\|3\pw u_t\|_{L^{\infty}}\|u+\pw u\|_{L^{\infty}}\|u-\pw u\|\nonumber\\
&\leqslant  C \Big(\|\br_{u_{t}}\| +\|\br_u\|\Big). \label{1112}
 \end{align}
 Similarly, we arrive  with replacing $u_{ht}$ by $\bs_{u_{t}} + \pw u_t$ at
\begin{align}
\|f_{t}(\pw u)-f_{t}(u_h)\|&\leqslant \|(3(\pw u)^2-1) \bs_{u_{t}}\|+3\| (\bs_{u_{t}} + \pw u_t) (\pw u+u_{h}) \bs_u\| \nonumber\\
&\leqslant \|3(\pw u)^2+1\|_{L^{\infty}}\|\bs_{u_{t}}\|+3\|\pw u+u_{h}\|_{L^{\infty}} \left(\|\bs_{u_{t}}\|\; \|\bs_u\|_{L^{\infty}} +
\|\pw u_t\|_{L^{\infty}} \|\bs_u\|\right) \nonumber\\
&\leqslant C\;\big( (1+\|\bs_u\|_{L^{\infty}})\;\|\bs_{u_{t}}\| +\|\bs_u\|\big).\label{1112-1}
\end{align}

Therefore, adding \eqref{1112} with\eqref{1112-1}, it follows using  Lemma \ref{ll2} and inequality \eqref{estimate:infty} that
\begin{align}\label{1113}
    \| f_t(u)-f_t(u_h)\| & \leqslant C \Big( 1+ \log \frac{1}{h}\;\left(\|\bs_{\bq}\|+\|\bs_u-\widehat{\bs}_u\|_{+}+\|\br_{\bq}\| \right)\Big)\;\int_{0}^t \left(\|\bs_{\bq}\|+\|\bs_u-\widehat{\bs}_u\|_{+}+\|\br_{\bq}\| \right)\;ds \nonumber\\
  &  +C \left(\|\bs_{\bq}\|+\|\bs_u-\widehat{\bs}_u\|_{+}+\|\br_{\bq}\| \right)+\|\br_{u_{t}}\| +\|\br_u\|.
\end{align}
Now, it remains  to estimate $\|\bs_{u_{tt}}(0)\|.$ We rewrite \eqref{ee2} at $t=0$ using  \eqref{hdg21} to obtain
\begin{equation}\label{aa}
    (\bs_{u_{tt}}(0),w_h)+(f(u)(0)-f(u_h(0)),w_h)=(\br_{u_{tt}}(0),w_h),
\end{equation}
Note that
\begin{align}
 \|(f(u)-f(u_h))(0)\|^2 &\leqslant C\;(\|\br_u(0)\|^2+\|\br_{\bq}(0)\|^2). \label{th13}
 \end{align} 
Finally, set $w_h = \bs_{u_{tt}}(0)$ in \eqref{aa}, and use the bound \eqref{th13} to arrive at
\begin{equation}\label{ab}
    \|\bs_{u_{tt}}(0)\|\leqslant C (\|\br_u(0)\|+\|\br_{\bq}(0)\|)+\|\br_{u_{tt}}(0)\|,
\end{equation}
Similarly, for $\bs_{\bq_t}(0)$, differentiating the first error equation with respect to time then write the resulting equation at $t=0$ and set $\bv_h=\bs_{\bq_t}(0)$, to obtain
\begin{equation}\label{ac}
    (\bs_{\bq_t}(0),\bs_{\bq_t}(0))+(\bs_{u_t}(0),\nabla\cdot\bs_{\bq_t}(0))-\langle \widehat{\bs}_{u_t}(0),\bs_{\bq_t}(0)\cdot\bn\rangle=(\br_{\bq_t}(0),\bs_{\bq_t}(0)).
\end{equation}
Here $(\bs_{u_t}(0),\nabla\cdot\bs_{\bq_t}(0))$ is zero, as we choose $u_{ht}(0)=\pw u_1$.
Now, a  use of  \eqref{hdg31} with \eqref{hdg41} and \eqref{erflux} at $t=0$ in \eqref{ac} shows after simplification
\begin{equation}\label{qt-0}
    \|\bs_{\bq_t}(0)\|^2+\|(\bs_{u_t}-\widehat{\bs}_{u_t})(0)\|_+^2 \leqslant C \|\br_{\bq_{t}}(0)\|^2.
\end{equation}
On combining  \eqref{1112}, \eqref{1113}, \eqref{ab} and \eqref{qt-0} in \eqref{th11}, an application of Lemmas~\ref{tt1} and \ref{ll3} completes the rest of the proof.

 \end{proof}
 
 \begin{theorem}\label{tt2} 
 Let $u \in L^{\infty}(0,T;H^{k+2})$, $u_t \in L^{\infty}(0,T;H^{k+2})$ 
 and $u_{tt} \in L^{1}(0,T;H^{k+2}).$
 With assumptions in Lemma~\ref{ll3}, the following estimates holds true for all $h= \max\limits_{K\in \mathscr{T}_h}h_K$ and for all $t \in (0,T]$:
\begin{equation}\label{321}
    \|(u_t-u_{ht})(t)\| + \|(\bq-\bq_h)(t)\|\leqslant {C\;h^{k+1} \Big( \|u_0\|_{H^{k+2}} + \|u\|_{W^{2,1}(0,T;H^{k+2})} + \|u\|_{W^{1,2}(0,T;H^{k+2})} \Big)}.
\end{equation}
If in addition $u_{tt}\in L^{\infty}(0,T;  H^{k+2})$  and $u_{ttt}\in L^{2}(0,T;H^{k+2}(\Omega)),$ then
\begin{align}\label{322}
     \|(u_{tt} - u_{htt})(t)\| + \|(\bq_t - \bq_{ht})(t)\| &\leqslant {C\;h^{k+1} \Big( \|u_0\|_{H^{k+2}} + \|u_1\|_{H^{k+2}} + \|u\|_{W^{3,1}(0,T; H^{k+2})} + \|u\|_{W^{2,2}(0,T;H^{k+2})} \Big)}.
\end{align}
\end{theorem}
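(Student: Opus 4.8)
The plan is to reduce the full errors $u_t-u_{ht}$, $\bq-\bq_h$ (and one time-derivative higher) to quantities already controlled in the preceding lemmas, by using the projection splitting introduced before Lemma~\ref{ll3}. Writing $\bq-\bq_h=\bs_\bq-\br_\bq$ and $u-u_h=\bs_u-\br_u$, where $\bs_\bullet$ are the discrete projection errors ($\bs=\theta$) and $\br_\bullet=\vartheta_\bullet$ the HDG-projection approximation errors, differentiation in time gives $u_t-u_{ht}=\bs_{u_t}-\br_{u_t}$ while the flux error is unchanged. By the triangle inequality it therefore suffices to bound the pair $\|\bs_\bq\|+\|\bs_{u_t}\|$, which is precisely the content of Lemma~\ref{ll3}, together with the pointwise-in-time projection errors $\|\br_\bq(t)\|$ and $\|\br_{u_t}(t)\|$, which Lemma~\ref{tt1} controls at the optimal order $O(h^{k+1})$. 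Note that because $\bq=\nabla u$, the divergence term in the $\br_u$ bound of Lemma~\ref{tt1} forces the appearance of $\|u\|_{H^{k+2}}$ (rather than $H^{k+1}$), which is consistent with the regularity hypotheses of the theorem.

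For the first estimate \eqref{321} I would apply Lemma~\ref{ll3} to absorb $\|\bs_\bq\|+\|\theta_{u_t}\|$ into $C\,h^{k+1}\big(\|u_0\|_{H^{k+2}}+\|u\|_{W^{2,1}(0,T;H^{k+2})}\big)$. The remaining terms $\|\br_\bq(t)\|\le C\,h^{k+1}\|u(t)\|_{H^{k+2}}$ and $\|\br_{u_t}(t)\|\le C\,h^{k+1}\|u_t(t)\|_{H^{k+2}}$ are projection errors evaluated at a fixed time. To convert these pointwise-in-$t$ values into a Bochner norm on the interval, I would invoke the one-dimensional Sobolev embedding $W^{1,2}(0,T)\hookrightarrow C[0,T]$ in the time variable, so that $\sup_t\|u(t)\|_{H^{k+2}}\le C\,\|u\|_{W^{1,2}(0,T;H^{k+2})}$; this is exactly where the term $\|u\|_{W^{1,2}(0,T;H^{k+2})}$ originates. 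Collecting the three contributions yields \eqref{321}.

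The second estimate \eqref{322} follows the same template one derivative higher. Writing $u_{tt}-u_{htt}=\bs_{u_{tt}}-\br_{u_{tt}}$ and $\bq_t-\bq_{ht}=\bs_{\bq_t}-\br_{\bq_t}$, I would bound $\|\bs_{\bq_t}\|+\|\theta_{u_{tt}}\|$ by Lemma~\ref{lem34}, which supplies the $\|u_1\|_{H^{k+2}}$ and $\|u\|_{W^{3,1}(0,T;H^{k+2})}$ contributions, and bound $\|\br_{\bq_t}(t)\|$, $\|\br_{u_{tt}}(t)\|$ by Lemma~\ref{tt1} together with the same time-embedding argument. Under the added hypotheses $u_{tt}\in L^\infty(0,T;H^{k+2})$ and $u_{ttt}\in L^2(0,T;H^{k+2})$ this produces the $\|u\|_{W^{2,2}(0,T;H^{k+2})}$ term, and summing gives \eqref{322}.

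The genuinely difficult work has already been done inside Lemmas~\ref{ll3} and \ref{lem34}, where the local Lipschitz nonlinearity was tamed through the $L^p$ bound of Lemma~\ref{ll2} and the a priori energy estimate of Corollary~\ref{coro}, with a Grönwall argument closing each inequality; the present theorem is essentially the assembly step. Consequently, the only delicate point I anticipate is the time-regularity bookkeeping: one must check that each pointwise-in-time evaluation of a projection error is legitimate under the stated hypotheses, and that the Sobolev-in-time embedding supplies exactly the $W^{1,2}$ (respectively $W^{2,2}$) norm claimed on the right-hand side. In particular, the role of the $u_{ttt}\in L^2$ assumption is precisely to make the pointwise bound of $\|\br_{u_{tt}}(t)\|$ admissible via the $H^1$-in-time embedding, and care is needed to ensure this lands in the asserted norm rather than one order too high.
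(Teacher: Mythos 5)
Your proposal is correct and follows essentially the same route as the paper: the paper's (two-line) proof likewise splits the error via the HDG projection and the triangle inequality, bounds the discrete parts $\bs_\bq,\theta_{u_t}$ (resp.\ $\bs_{\bq_t},\theta_{u_{tt}}$) by Lemma~\ref{ll3} (resp.\ Lemma~\ref{lem34}), and bounds the projection remainders by Lemma~\ref{tt1}. Your additional remarks on the time-regularity bookkeeping (where the $W^{1,2}$, resp.\ $W^{2,2}$, terms come from) only make explicit what the paper leaves implicit.
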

 \begin{proof}
 With the help of HDG projections and triangle inequality, we write \[\|(\bq-\bq_h)(t)\|\leqslant \|(\bq-\pV \bq)(t)\|+\|(\pV \bq-\bq_h)(t)\|.\] 
  Now, from Theorem-\ref{tt1} and Lemma-\ref{ll3}, the result follows, completing the rest of the proof.
 \end{proof}
 
 As a consequence of \eqref{321} and using the fact that $\phi(t) = \phi(0) + \int_{0}^t \phi(s)\;ds$, we obtain 
 \begin{equation}\label{L2-estimate-0}
  \|(u-u_h)(t)\| \leqslant {C\;h^{k+1} \Big( \|u_0\|_{H^{k+2}} + \|u\|_{W^{2,1}(0,T;H^{k+2})} + \|u\|_{W^{1,2}(0,T;H^{k+2})} \Big)},
 \end{equation}
 under higher regularity : $u \in L^{\infty}(0,T; H^{k+2} )$, $u_t \in L^{\infty}(0,T;H^{k+2} )$ and $u_{tt} \in L^{2}(0,T;H^{k+2}).$
 
 Below, we discuss using a variant of Baker's nonstandard energy technique, see \cite{baker1976error} and \cite{sinha1998effect} a direct proof of the optimal order of convergence in $L^{\infty}(0,T;L^2)$ norm with reduced regularity results on the exact solution.
 
  For any $\widetilde{\phi}$, define $\widetilde{\phi}(s)=\int_0^{s}\phi(z)dz$ for $0\leqslant s\leqslant t$. 
 
 \begin{lemma}\label{ll4} With $u_{h0}$ defined by HDG elliptic problem \eqref{HDG0}, $u_{h1}= \pw u_1,$ 
  let $u  \in W^{1,2} (0,T; H^{k+2} ).$ 
 Then, here exists a positive constant $C$ independent of $h$ such that
   \begin{align*}
\Big(\|\widetilde{\bs}_\bq\|^2+\|\theta_{u}\|^2+\|\widetilde{\theta}_u-\widetilde{\widehat{\theta}}_u\|_+^2\Big)^{1/2}
\leqslant  C\;h^{k+1}\; \Big( \|u_0\|_{H^{k+2}} + \|u\|_{W^{1,2}(0,T; H^{k+2})}\Big).
\end{align*}
\end{lemma}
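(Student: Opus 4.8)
The plan is to run Baker's nonstandard energy argument in its HDG form, building the energy from the time primitives rather than the time derivatives, so that no bound on $u_{tt}$ is ever needed. With $\widetilde{\phi}(t)=\int_0^t\phi(s)\,ds$, I would test the flux error equation \eqref{ee1} with $\bv_h=\widetilde{\bs}_\bq$, producing $(\bs_\bq,\widetilde{\bs}_\bq)=\tfrac12\tfrac{d}{dt}\|\widetilde{\bs}_\bq\|^2$, and I would integrate the displacement error equation \eqref{ee2} once in time before testing with $w_h=\bs_u$. The choice $u_{h1}=\pw u_1$ forces $\bs_{u_t}(0)=0$, so the integrated equation reads $(\bs_{u_t},w_h)+(\widetilde{\bs}_\bq,\nabla w_h)-\langle\widetilde{\widehat{\bs}}_\bq\cdot\bn,w_h\rangle+\big(\int_0^t(f(u)-f(u_h))\,ds,w_h\big)=(\br_{u_t}(t)-\br_{u_t}(0),w_h)$, and its first term contributes $(\bs_{u_t},\bs_u)=\tfrac12\tfrac{d}{dt}\|\bs_u\|^2$.

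The decisive step is the cancellation of the mixed terms, which I expect to transcribe directly from the conservation computation in Theorem~\ref{2.1} with $(\bq_h,u_{ht})$ replaced by $(\widetilde{\bs}_\bq,\bs_u)$. Integrating by parts in space, the volume terms combine as $(\bs_u,\nabla\cdot\widetilde{\bs}_\bq)+(\widetilde{\bs}_\bq,\nabla\bs_u)=\langle\bs_u,\widetilde{\bs}_\bq\cdot\bn\rangle$, leaving $\langle\bs_u-\widehat{\bs}_u,\widetilde{\bs}_\bq\cdot\bn\rangle-\langle\widetilde{\widehat{\bs}}_\bq\cdot\bn,\bs_u\rangle$. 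Substituting the time-integrated numerical trace $\widetilde{\widehat{\bs}}_\bq\cdot\bn=\widetilde{\bs}_\bq\cdot\bn-\tau(\widetilde{\bs}_u-\widetilde{\widehat{\bs}}_u)$ from \eqref{eflux}, and using that $\widehat{\bs}_u$ is single valued together with the conservativity relations \eqref{ee4}--\eqref{ee5} (so that $\langle\widehat{\bs}_u,\widetilde{\widehat{\bs}}_\bq\cdot\bn\rangle=0$), this collapses to exactly $\tfrac12\tfrac{d}{dt}\|\widetilde{\bs}_u-\widetilde{\widehat{\bs}}_u\|_+^2$. Writing $\mathcal{G}=\|\widetilde{\bs}_\bq\|^2+\|\bs_u\|^2+\|\widetilde{\bs}_u-\widetilde{\widehat{\bs}}_u\|_+^2$, the two identities add up to $\tfrac12\tfrac{d}{dt}\mathcal{G}+\big(\int_0^t(f(u)-f(u_h))\,ds,\bs_u\big)=(\br_\bq,\widetilde{\bs}_\bq)+(\br_{u_t}(t)-\br_{u_t}(0),\bs_u)$.

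The linear right-hand side is routine: Cauchy--Schwarz bounds each term by $\mathcal{G}^{1/2}$ times a projection error, and Lemma~\ref{tt1} with the Cauchy--Schwarz inequality in time controls $\int_0^t\|\br_\bq\|\,ds$ and the boundary values of $\br_{u_t}$ by $Ch^{k+1}(\|u_0\|_{H^{k+2}}+\|u\|_{W^{1,2}(0,T;H^{k+2})})$, while Lemma~\ref{IC-approx-1} gives $\mathcal{G}(0)^{1/2}\le Ch^{k+2}\|u_0\|_{H^{k+2}}$ since all three primitive quantities vanish at $t=0$. The genuine obstacle is the nonlinear term, which must be estimated \emph{solely} through $\mathcal{G}$ and data; appealing to the pointwise-in-time bounds of Lemma~\ref{ll3} is not allowed, as these rest on $u_{tt}\in L^1(0,T;H^{k+2})$ and would defeat the reduced-regularity claim. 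I would split $f(u)-f(u_h)=(f(u)-f(\pw u))+(f(\pw u)-f(u_h))$, bound the first by $C\|\br_u\|$ as in \eqref{th9}, and factor the second as $f(\pw u)-f(u_h)=g\,\bs_u$ with $g=(\pw u)^2+\pw u\,u_h+u_h^2-1$, whose $L^3(\Omega)$ norm is uniformly bounded by Corollary~\ref{coro} (which yields $\|u_h\|_{L^6}\le C$). The device that keeps the regularity low is to integrate by parts in time, $\int_0^t g\,\bs_u\,ds=g(t)\widetilde{\bs}_u(t)-\int_0^t g_t\,\widetilde{\bs}_u\,ds$, trading $\bs_u$ for its primitive $\widetilde{\bs}_u$; applying the $L^p$-stability Lemma~\ref{ll2} to the \emph{time-integrated} form of \eqref{ee1} then gives $\|\widetilde{\bs}_u\|_{L^6}\le C(\|\widetilde{\bs}_\bq\|+\|\widetilde{\bs}_u-\widetilde{\widehat{\bs}}_u\|_++\|\widetilde{\br}_\bq\|)\le C(\mathcal{G}^{1/2}+h^{k+1}\,\text{data})$, i.e.\ a bound purely in the energy. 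The point needing the most care is the uniform $L^p(\Omega)$ control of $u_{ht}$ inside $g_t$, for which I would use a time-differentiated a priori estimate fed into Lemma~\ref{ll2}.

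Assembling these bounds and dividing by $\mathcal{G}^{1/2}$ yields a differential inequality of the form $\tfrac{d}{dt}\mathcal{G}^{1/2}\le C\mathcal{G}^{1/2}(t)+C\int_0^t\mathcal{G}^{1/2}(s)\,ds+Ch^{k+1}(\|u_0\|_{H^{k+2}}+\|u\|_{W^{1,2}(0,T;H^{k+2})})$; integrating in time and invoking Gr\"onwall's inequality in the iterated form that absorbs the extra $\int_0^t\mathcal{G}^{1/2}$ closes the estimate at order $h^{k+1}$, which is the assertion. I expect the nonlinear term, and in particular the need to express it through the primitive variables without ever bounding $u_{tt}$, to be the crux; the energy bookkeeping and the trace cancellation are faithful copies of the conservation identity of Theorem~\ref{2.1}.
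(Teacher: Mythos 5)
Your proposal is correct and follows essentially the same route as the paper: the same Baker-type energy built from $\widetilde{\bs}_\bq$, $\theta_u$ and $\widetilde{\theta}_u-\widetilde{\widehat{\theta}}_u$, the same trace cancellation, the same splitting of $f(u)-f(u_h)$ with an integration by parts in time that trades $\bs_u$ for its primitive, the same use of Lemma~\ref{ll2} on the time-integrated flux equation to control $\|\widetilde{\theta}_u\|_{L^6}$, and the same iterated Gr\"onwall closure. The only differences are cosmetic (you work with $\mathcal{G}^{1/2}$ where the paper uses Young's inequality on $\mathcal{G}$, and you factor the cubic difference algebraically where the paper uses the mean value theorem), and the delicate point you flag --- uniform $L^6$ control of $u_{ht}$ inside $g_t$ --- is handled in the paper exactly as you propose.
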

 \begin{proof}
Integrating \eqref{ee2} from $0$ to $s$ and choosing $w_h:=\theta_{u}$ and $\bv_h:=\widetilde{\bs}_{\bq}$ in \eqref{ee1} and adding, we obtain 
\begin{align}
(\theta_{u_s},\theta_{u}) + (\bs_\bq,\widetilde{\bs}_{\bq})+\langle\tau(\widetilde{\theta}_u-\widetilde{\widehat\theta}_u),
\theta_u-\widehat\theta_u\rangle=(\br_{\bq},\widetilde{\bs}_{\bq})+(\br_{u_{s}},\theta_{u})-\Big(\int_0^{s}(f(u)-f(u_h))dz,\theta_u\Big).\label{th0}
 \end{align}
 Now, rewrite \eqref{th0} as
 \begin{align}
 \frac{1}{2}\frac{d}{ds}\left(\|\widetilde{\bs}_\bq\|^2+\|\theta_{u}\|^2+\|\widetilde{\theta}_u-\widetilde{\widehat{\theta}}_u\|_+^2\right)=(\br_{\bq},\widetilde{\bs}_{\bq})+(\br_{u_{s}},\theta_{u})-\Big(\int_0^{s}(f(u)-f(u_h))dz,\theta_{u}\Big).\label{th1}
 \end{align}
A use of the Cauchy-Schwarz and Young's inequalities in \eqref{th1}, we arrive at
 \begin{align}
     \frac{1}{2}\frac{d}{ds} \left(\|\widetilde{\bs}_\bq\|^2+\|\theta_{u}\|^2+\|\widetilde{\theta}_u-\widetilde{\widehat{\theta}}_u\|_+^2\right)&\leqslant C \Big(\|\br_{\bq}\|^2+\|\widetilde{\bs}_{\bq}\|^2+\|\br_{u_{s}}\|^2+\|\theta_{u}\|^2\nonumber\\&+ \left\Vert\int_0^{s}(f(u)-f( u_h))dz\right\Vert^2\Big).\label{3.21}
 \end{align}
 Now, with the help of HDG projection and equation \eqref{th9}, we obtain 
 \begin{align}
   \left\Vert\int_0^{s}(f(u)-f(\pw u))dz\right\Vert^2 
    &\leqslant C\int_0^{s}(\|1+(\pw u)^2+u^2\|^2{_{L^\infty}}\|\br_u\|^2)dz.\label{3.20}
\end{align}
Also, using the mean value theorem, for some $\lambda \in (0,1)$,
\begin{align*}
    f(\pw u)-f(u_h) &= \big(3((1-\lambda)\pw u+\lambda u_h)^2-1\big)(u_h-\pw u),\nonumber\\
    &= (3((1-\lambda)\pw u+\lambda u_h)^2-1)\dfrac{d}{ds}\widetilde{\theta}_u,\nonumber\\
    &= \dfrac{d}{ds}\big((3((1-\lambda)\pw u+\lambda u_h)^2-1)\widetilde{\theta}_u\big)\nonumber\\&-6((1-\lambda)\pw u+\lambda u_h)((1-\lambda)\pw u_s+\lambda u_{hs})\widetilde{\theta}_u.
\end{align*}
Hence, 
 \begin{align*}
      \left\Vert\int_0^{s}\big(f(\pw u)-f(u_h)\big)dz\right\Vert^2
    &\leqslant C\;\big((1+\|\pw u\|^{4}_{L^6}+\|u_{h}\|^{4}_{L^6}\|)\|\widetilde{\theta}_u\|^{2}_{L^6}\nonumber\\&+\int_0^{s}(1+\|\pw u\|^{2}_{L^6}+\|u_{h}\|^{2}_{L^6})(1+\|\pw u_s\|^{2}_{L^6}+\|u_{hs}\|^{2}_{L^6})\|\widetilde{\theta}_u\|^2_{L^6}dz\big).
 \end{align*}
If we consider the functions $u,\ u_s$
$\in L^\infty(\Omega)\subset L^6(\Omega)$ 
 and apply the $L^p$ estimates from Lemma-\ref{ll2} and use the second bound, derived from Theorem-\ref{tt2}, we obtain
 \begin{align}
      \left\Vert\int_0^{s}(f(\pw u)-f(u_h))dz\right\Vert^2
    &\leqslant C\;\big(\|\widetilde{\bs}_\bq\|^2+\|\widetilde{\br}_{u}\|^2+\|\widetilde{\theta}_u-\widetilde{\widehat{\theta}}_u\|_+^2\nonumber\\&+\int_0^{s}(\|\widetilde{\bs}_\bq\|^2+\|\widetilde{\br}_{u}\|^2+\|\widetilde{\theta}_u-\widetilde{\widehat{\theta}}_u\|_+^2)\;dz\big).\label{3.22}
 \end{align}
 Combine expression \eqref{3.20} and \eqref{3.22}, and then put in the \eqref{3.21} to arrive at 
\begin{align}
\frac{d}{ds}\left(\|\widetilde{\bs}_\bq\|^2+\|\theta_{u}\|^2+\|\widetilde{\theta}_u-\widetilde{\widehat{\theta}}_u\|_+^2\right) &\leqslant C \Big(\|\br_{\bq}\|^2+\|\br_{u_{s}}\|^2+\|\theta_{u}\|^2+\|\br_u\|^2+\|\widetilde{\bs}_{\bq}\|^2\nonumber\\&+\|\widetilde{\br}_{u}\|^2+\|\widetilde{\theta}_u-\widetilde{\widehat{\theta}}_u\|_+^2+\int_0^{s}(\|\widetilde{\bs}_\bq\|^2+\|\widetilde{\br}_{u}\|^2\nonumber\\&+\|\widetilde{\theta}_u-\widetilde{\widehat{\theta}}_u\|_+^2)\;dz\Big).\label{3.23}
 \end{align}
Integration of the above-resulting equation \eqref{3.23} yields
\begin{align}
\|\widetilde{\bs}_\bq\|^2&+\|\theta_{u}\|^2+\|\widetilde{\theta}_u-\widetilde{\widehat{\theta}}_u\|_+^2\leqslant C \Big(\|\theta_{u}(0)\|^2+\int_0^{t}\Big(\|\br_{\bq}\|^2+\|\br_{u_{s}}\|^2+\|\theta_{u}\|^2+\|\br_u\|^2\nonumber\\
&+\|\widetilde{\bs}_{\bq}\|^2+\|\widetilde{\br}_{u}\|^2+\|\widetilde{\theta}_u-\widetilde{\widehat{\theta}}_u\|_+^2\Big)\;ds+\int_0^{t}\int_0^{s}(\|\widetilde{\bs}_\bq\|^2+\|\widetilde{\br}_{u}\|^2
+\|\widetilde{\theta}_u-\widetilde{\widehat{\theta}}_u\|_+^2)\;dzds.\label{3.24}
 \end{align}
Solving the double integral term in \eqref{3.24}, we can write it,
\begin{align*}
\|\widetilde{\bs}_\bq\|^2&+\|\theta_{u}\|^2+\|\widetilde{\theta}_u-\widetilde{\widehat{\theta}}_u\|_+^2\leqslant 
C\; (1+t) \;\Big( \|\theta_{u}(0)\|^2+\int_0^{t}\Big(\|\br_{\bq}\|^2+\|\br_{u_{s}}\|^2+\|\br_u\|^2\Big)\;ds\\
&+ C\;(1+t)\;\int_{0}^{t} \Big(\|\widetilde{\bs}_{\bq}\|^2+\|\theta_{u}\|^2+\|\widetilde{\theta}_u-\widetilde{\widehat{\theta}}_u\|_+^2\Big)\;ds.
 \end{align*}
Finally, an application of the Gr\"{o}nwall's lemma with projection estimates from Lemma~\ref{tt1} completes the rest of the proof. 
 \end{proof}

  \begin{theorem}\label{tt3} With $u_{h0}$ defined by HDG elliptic problem \eqref{HDG0}, $u_{h1}= \pw u_1,$
 let $u \in L^{\infty}(0,T; H^{k+2} )$ and $u_t \in L^{2}(0,T; H^{k+1} ).$ 
 Then, there exists a positive constant $C$ independent of $h$ such that  for all $h= \max\limits_{K\in \mathscr{T}_h}h_K$ and for all $t \in (0,T]$, the following estimate holds:
\begin{equation*}
    \|(u-u_{h})(t)\| + \|(\widetilde{\bq}-\widetilde{\bq}_h)(t)\|\leqslant {C\;h^{k+1}\; \Big( \|u_0\|_{H^{k+2}} + \|u\|_{L^{\infty}(0,T; H^{k+2})}+ \|u_t\|_{L^2(0,T; H^{k+2})}\Big)}.
\end{equation*}
\begin{proof}
   With the help of HDG projections, we can write $\|u-u_h\|\leqslant \|u-\pw u\|+\|\pw u-u_h\|.$ \\
Now from Theorem-\ref{tt1} and Lemma-\ref{ll4}, the result follows, and this completes the proof. 
\end{proof}
\end{theorem}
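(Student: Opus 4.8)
The plan is to deduce the estimate from two ingredients already established: the HDG projection approximation properties of Lemma~\ref{tt1} and the Baker-type energy bound of Lemma~\ref{ll4}. The point of the theorem is that, unlike Theorem~\ref{tt2} and the consequence \eqref{L2-estimate-0}, the optimal $L^\infty(0,T;L^2)$ rate for the displacement is obtained with no hypothesis on $u_{tt}$, only $u\in L^\infty(0,T;H^{k+2})$ and $u_t\in L^2(0,T;H^{k+2})$; this reduced regularity is exactly what Lemma~\ref{ll4} was built to exploit. So the entire task reduces to reinserting the projection errors around the quantities $\theta_u$ and $\widetilde{\bs}_{\bq}$ that Lemma~\ref{ll4} already controls.

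First I would split both errors through the HDG projections. Writing $u-u_h=(u-\pw u)+(\pw u-u_h)=-\br_u+\theta_u$, and observing that the spatial projection $\pV$ commutes with the temporal antiderivative $\widetilde{(\cdot)}$ so that $\widetilde{\bq}-\widetilde{\bq}_h=-\widetilde{\br}_{\bq}+\widetilde{\bs}_{\bq}$, the triangle inequality gives
\begin{align*}
\|(u-u_h)(t)\|+\|(\widetilde{\bq}-\widetilde{\bq}_h)(t)\|\leqslant \|\br_u(t)\|+\|\theta_u(t)\|+\|\widetilde{\br}_{\bq}(t)\|+\|\widetilde{\bs}_{\bq}(t)\|.
\end{align*}
The terms $\|\theta_u(t)\|$ and $\|\widetilde{\bs}_{\bq}(t)\|$ are each bounded by Lemma~\ref{ll4}, contributing $Ch^{k+1}\big(\|u_0\|_{H^{k+2}}+\|u\|_{W^{1,2}(0,T;H^{k+2})}\big)$, and this is dominated by the right-hand side of the theorem after using $\|u\|_{W^{1,2}(0,T;H^{k+2})}\leqslant C\big(\|u\|_{L^\infty(0,T;H^{k+2})}+\|u_t\|_{L^2(0,T;H^{k+2})}\big)$. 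For the projection pieces I would apply Lemma~\ref{tt1} pointwise in time: $\|\br_u(t)\|\leqslant Ch^{k+1}\|u\|_{L^\infty(0,T;H^{k+2})}$, while, since $\bq=\nabla u$, the flux projection error satisfies $\|\br_{\bq}(z)\|\leqslant Ch^{k+1}\|u(z)\|_{H^{k+2}}$, so integrating in time and applying Cauchy--Schwarz yields
\begin{align*}
\|\widetilde{\br}_{\bq}(t)\|\leqslant \int_0^t\|\br_{\bq}(z)\|\,dz\leqslant Ch^{k+1}\,T^{1/2}\,\|u\|_{L^2(0,T;H^{k+2})}.
\end{align*}
Summing the four contributions and absorbing $\|u\|_{L^2(0,T;H^{k+2})}\leqslant T^{1/2}\|u\|_{L^\infty(0,T;H^{k+2})}$ gives precisely the asserted bound.

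The genuinely hard part is not in this theorem but in Lemma~\ref{ll4}: that is where the nonstandard Baker energy argument time-integrates the displacement equation, tests against $\theta_u$ and $\widetilde{\bs}_{\bq}$, and controls the cubic nonlinearity through the $L^p$ estimate of Lemma~\ref{ll2} together with the conserved-energy $L^4$ bound of Corollary~\ref{coro}, thereby buying the optimal order under the weakened regularity. Granting Lemma~\ref{ll4}, the only steps in the present proof requiring attention are the commutation of $\pV$ with the antiderivative $\widetilde{(\cdot)}$ and the Cauchy--Schwarz conversion of the time-integrated flux projection error into an $L^2$-in-time norm; both are routine, so I anticipate no real obstacle here.
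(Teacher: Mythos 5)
Your proposal is correct and follows exactly the paper's route: decompose $u-u_h$ and $\widetilde{\bq}-\widetilde{\bq}_h$ through the HDG projections via the triangle inequality, bound the projected error parts $\theta_u$ and $\widetilde{\bs}_{\bq}$ by Lemma~\ref{ll4}, and bound the projection errors by Lemma~\ref{tt1}. The paper states this in one line; you have merely supplied the routine details (commuting $\pV$ with the time antiderivative and the Cauchy--Schwarz step for $\widetilde{\br}_{\bq}$), which are all sound.
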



\section{Post-processing}
This section deals with local post-processing of the semi-discrete approximation of the displacement variable, which provides a super-convergence result when compared with the exact solution.


For HDG approaches, Cockburn {\it et al.} devised element-by-element post-processing of the scalar variable for elliptic problems and the velocity variable in the Stokes problem in \cite{cockburn2012conditions,cockburn2013conditions}. He also demonstrated how to use this local postprocessing to achieve a close approximation to the original scalar unknown that also converges with order $O(h^{k+2})$ for $k \geqslant 1$ for the acoustic wave problem \cite{cockburn2014uniform}. Inspired by their approach, we seek $u^*_h$ in the space
\begin{align*}
W_h&=\{w\in L^2(\Omega):w|_K\in \mathcal{P}_{k+1}(K)~\forall ~K\in \mathscr{T}_h\},
\end{align*}
such that, in each element $K\in \mathscr{T}_h$,  it satisfies:
\begin{equation}\left.
\begin{array}{rl}
(\nabla u^*_h, \nabla w) &=(\bq_h, \nabla w)  \quad\forall w\in \mathcal{P}_{k+1}(K),\\
(u^*_h,1) &= (u_h,1).
\end{array}\right\}\label{main1}
\end{equation}
\subsection{The BDM Projection}
 For our subsequent analysis, we define the BDM projection, introduced earlier by Brezzi, Douglas, and Marini \cite{brezzi1985two} and denoted by ${\bf{\Pi}}^{BDM}$ is an operator that maps a vector-valued function $\bi\in \bH^1(\Omega)$ onto  BDM finite dimensional space $\bV_h$ 
 satisfying:
\begin{subequations}\label{bdm1}
\begin{eqnarray}
     ({\bf{\Pi}}^{BDM}\bi , v )_K  &=& ( \bi , v )_K, \quad \forall  v \in [\mathcal{P}_k(K)]^d, \\
     \langle{\bf{\Pi}}^{BDM}\bi , p\rangle_E &=& \langle\bi ,p \rangle_E, \quad \forall  p \in \mathcal{P}_k(E), \quad \forall E \in \partial K
\end{eqnarray}
\end{subequations}
with property,
\begin{align}\label{bdm} 
(\nabla\cdot{\bf{\Pi}}^{BDM}\bi, v)_K = (\nabla\cdot\bi, v)_K \quad \forall v\in\mathcal{P}_k(K),
\end{align} 
and
\begin{align}\label{BDM-projection}
    \|{\bf{\Pi}}^{BDM}\bi-\bi\| \leqslant & C\;h\|\bi\|_{H^1(K)}  \quad \forall \bi\in H^1(K).  
\end{align} 
The main theorem of this section is given below.
\begin{theorem}\label{th-pp}
Under the assumptions of Theorem~\ref{tt2}, there holds for $k\geqslant1$  
\begin{align*}
\|u-u_h^{*}\| \leqslant { C\;h^{k+2}\Big( {\|u_0\|_{H^{k+2}}}+\|u_1\|_{H^{k+2}} + \|u\|_{W^{3,1}(0,T;H^{k+2})}+\|u\|_{W^{0,2}(0,T;H^{k+2})} \Big)}.
\end{align*}
\end{theorem}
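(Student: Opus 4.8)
The plan is to follow the element-by-element post-processing analysis of Cockburn \emph{et al.}, reducing the bound on $\|u-u_h^*\|$ to three pieces: a projection-approximation term, a power of $h$ times the flux error $\|\bq-\bq_h\|$, and the $L^2$-superconvergence of the projected displacement $\theta_u=\pw u-u_h$. First I would introduce the auxiliary local projection $\Pi^*\colon H^1(K)\to\mathcal P_{k+1}(K)$ defined, in analogy with \eqref{main1}, by $(\nabla\Pi^* u,\nabla w)_K=(\nabla u,\nabla w)_K$ for all $w\in\mathcal P_{k+1}(K)$ and $(\Pi^* u,1)_K=(u,1)_K$; a Bramble--Hilbert/scaling argument then gives $\|u-\Pi^* u\|\le C h^{k+2}|u|_{H^{k+2}}$. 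Setting $\delta^*=\Pi^* u-u_h^*$ and subtracting \eqref{main1} while using $\nabla u=\bq$ yields the elementwise identities $(\nabla\delta^*,\nabla w)_K=(\bq-\bq_h,\nabla w)_K$ for all $w\in\mathcal P_{k+1}(K)$ and $(\delta^*,1)_K=(u-u_h,1)_K$.

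Next I would extract the two tractable contributions. Choosing $w=\delta^*$ in the first identity gives $\|\nabla\delta^*\|_K\le\|\bq-\bq_h\|_K$, and a Poincar\'e inequality for the mean-free part yields $\|\delta^*\|_K\le C h_K\|\nabla\delta^*\|_K+|K|^{-1/2}|(\delta^*,1)_K|$. Because $k\ge1$ forces $1\in\mathcal P_{k-1}(K)$, the projection property \eqref{Pj12} gives $(\pw u-u,1)_K=0$, hence $(\delta^*,1)_K=(u-u_h,1)_K=(\theta_u,1)_K$ and $|K|^{-1/2}|(\delta^*,1)_K|\le\|\theta_u\|_K$. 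Summing over $K\in\mathscr T_h$ produces $\|\delta^*\|\le C h\|\bq-\bq_h\|+\|\theta_u\|$. Theorem~\ref{tt2} bounds $\|\bq-\bq_h\|$ by $C h^{k+1}$ times data norms dominated by those in the statement, so $h\|\bq-\bq_h\|=O(h^{k+2})$, and the whole theorem is reduced to proving $\|\theta_u(t)\|\le C h^{k+2}(\cdots)$.

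The remaining and genuinely hard step is this $L^2$-superconvergence of $\theta_u$. My approach is an Aubin--Nitsche duality: at fixed $t$ solve $-\Delta\Phi=\theta_u(t)$, $\Phi|_{\partial\Omega}=0$, with $\|\Phi\|_{H^2}\le C\|\theta_u(t)\|$ and $\Psi=\nabla\Phi$, and test the error equations \eqref{ee1}--\eqref{ee2} against the HDG and BDM projections of $(\Psi,\Phi)$, using the commutativity property exploited in Lemma~\ref{ll2} together with the divergence/trace identities \eqref{bdm} of ${\bf{\Pi}}^{BDM}$. As in the elliptic superconvergence theorem of \cite{cockburn2010projection}, the projection-residual pairings $(\br_{\bq},\pV\Psi)$ and $(\pV\Psi-\Psi,\theta_{\bq})$ each gain one order of $h$ from orthogonality to $\mathcal P_{k-1}$, so they contribute $O(h^{k+2})\|\theta_u\|$.

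The main obstacle, and the reason for the extra time regularity in the statement, is the acceleration term $(\theta_{u_{tt}},\pw\Phi)$ together with the nonlinear term $(f(u)-f(u_h),\pw\Phi)$ coming from \eqref{ee2}: a static dual bounds them only by $\|\theta_{u_{tt}}\|\,\|\Phi\|$ and $\|f(u)-f(u_h)\|\,\|\Phi\|$, both of order $h^{k+1}\|\theta_u\|$, which is one power short. To recover $O(h^{k+2})$ I would replace the static dual by a time-dependent (backward) dual problem and integrate the acceleration term by parts in time over $(0,t)$ in the spirit of Baker \cite{baker1976error}; this transfers a time derivative onto $\Phi$ and generates the integrated norm $\|u\|_{W^{0,2}(0,T;H^{k+2})}$, while the superconvergent initial error $\theta_u(0)=O(h^{k+2})$ from Lemma~\ref{IC-approx-1} controls the boundary-in-time terms and the second-order estimates of Lemma~\ref{lem34} (hence the $W^{3,1}$ regularity and $\|u_1\|_{H^{k+2}}$) control $\theta_{u_{tt}}$ and $\theta_{u_t}-\widehat{\theta}_{u_t}$ inside the time integrals. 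The local Lipschitz nonlinearity I would split through $\pw u$ and treat exactly as in \eqref{th9}--\eqref{th131}, via the mean value theorem and the uniform $L^p$ bounds of Corollary~\ref{coro} and Lemma~\ref{ll2}; since these factors are already $O(h^{k+1})$, a concluding Gr\"onwall argument converts them into the required $O(h^{k+2})$.
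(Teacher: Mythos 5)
Your proposal follows essentially the same two-step architecture as the paper: first a local reduction of $\|u-u_h^{*}\|_K$ to $h_K\|\bq-\bq_h\|_K$ plus a mean-value term controlled by the projected displacement error (the paper does this via $P_{k+1}u$, the splitting $\|w\|_K\leqslant\|P_0w\|_K+Ch\|\nabla w\|_K$ and \eqref{main1}, which is equivalent to your $\Pi^{*}$/Poincar\'e argument), and second a superconvergence estimate for that term obtained by testing the error equations against a backward-in-time dual wave problem with BDM/interpolation corrections and a time integration by parts, exactly as in Lemma~\ref{lemma-pp}. Two details are worth noting: since $(\delta^{*},1)_K=(P_0\bs_u,1)_K$, the sharp reduction is to $\|P_{k-1}\bs_u\|$ (which is all the paper proves), whereas your reduction to the full $\|\bs_u\|$ demands a slightly stronger superconvergence result --- still reachable by the same duality, but unnecessary; and the paper closes the nonlinear term not by Gr\"onwall but by building the linearized potential $f_u(u)\phi$ into the dual problem \eqref{ereq3}, so that the leading contribution $(f_u(u)(u-u_h),\phi)$ cancels exactly and only projection-orthogonality remainders of order $h^{k+2}$ survive.
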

To prove this theorem, we first prove a superconvergence of $P_{k-1}\theta_u$ with the help of the duality argument.
\subsection{The dual problem} 
Now, we introduce a dual problem of the wave equation as follows. We define the function $\phi \in H^1_0(\Omega)$ to be the solution of the wave equation and for any given function $\Theta \in L^2(\Omega)$, $\phi$ satisfying
\begin{subequations}\label{ereq3}
\begin{eqnarray}
{\phi}_{ss}(s)-\Delta\phi(s) + f_u(u)\phi(s) &=& 0,~~~~~~~~~\text{in }\Omega\times[0,t),\label{ep1}\\
\phi &=&0,\label{ep3}~~~~~~~~~\text{on }\partial\Omega\times[0,t),\\
\phi(t)&=&0, ~~~~~~~~~\text{in }\Omega,\label{ep4}\\
\phi_s(t)&=&\Theta, ~~~~~~~~~\text{in }\Omega.\label{ep5}
\end{eqnarray}
\end{subequations}
\textbf{Proposition 4.1}. With $u \in {L^\infty(\Omega)}$, there is a positive constant $C$ such that  the following holds:
\begin{align}
\|\phi\|_{L^\infty(0,t;L^2(\Omega))}+\|\phi\|_{L^\infty(0,t;H^1(\Omega))}+\|\phi_t\|_{L^\infty(0,t;L^2(\Omega))}\leqslant C \|\Theta\|.\label{p0} 
\end{align}
Moreover, 
\begin{align}
    \|\undertilde{\phi}\| _{L^\infty(0,t;H^2(\Omega))}\leqslant C \|\Theta\|,\label{p2}
\end{align}
where $\|\undertilde{\phi}(s)\| = \int_s^t \phi(z)\;dz$.
\begin{proof}
By change of variable $s\longrightarrow t-s$ and with $\bar{\phi}(s)= \phi(t-s),$ the problem \eqref{ereq3} in  transformed into a forward problem in $\bar{\phi}.$

\begin{subequations}\label{ereq3-1}
\begin{eqnarray}
\bar{\phi}_{ss}(s)-\Delta \bar{\phi}(s) + f_u(\bar{u}) \bar{\phi}(s) &=& 0,~~~~~~~~~\text{in }\Omega\times[0,t),\label{ep1-1}\\
\bar{\phi} &=&0,\label{ep3-1}~~~~~~~~~\text{on }\partial\Omega\times[0,t),\\
\bar{\phi}(0)&=&0, ~~~~~~~~~\text{in }\Omega,\label{ep4-1}\\
\bar{\phi}_s(0)&=&-\Theta, ~~~~~~~~~\text{in }\Omega.\label{ep5-1}
\end{eqnarray}
\end{subequations}
Now, the standard energy argument along with the boundedness of $f_u$ and Gr\"{o}nwall's inequality, shows
\begin{align}
\|\bar{\phi}\|_{L^\infty(0,t;L^2(\Omega))}+\|\bar{\phi}\|_{L^\infty(0,t;H^1(\Omega))}+\|\bar{\phi}_t\|_{L^\infty(0,t;L^2(\Omega))}\leqslant C\; \|\Theta\|.\label{p0-1} 
\end{align}
and  on integrating concerning time, use of \eqref{p0-1} yields in a straightforward way the estimate
\begin{align}
    \|\undertilde{\bar{\phi}}\| _{L^\infty(0,t;H^2(\Omega))}\leqslant C \|\Theta\|,
\end{align}
where $\|\undertilde{\bar{\phi}}(s)\| = \int_s^t \bar{\phi}(z)\;dz.$
Then an application of inverse transformation $t-s \longrightarrow s$ and with $\bar{\phi}(t-s)= \phi(s)$ completes the rest of the proof.
\end{proof}

Next, in  order  to prove the Theorem \ref{th-pp}, we need to find the estimate of $P_{k-1}\bs_u$
\subsection{Estimate of \texorpdfstring{$P_{k-1}\bs_u$}{TEXT}}

\begin{lemma}\label{lemma-pp}
Under the assumptions of Theorem~\ref{tt2}, there holds for $k\geqslant1$  
\begin{align}
   \|P_{k-1}\bs_u(t)\| \leqslant {C\;h^{k+2}\Big( \|u_0\|_{H^{k+2}}+ \|u_1\|_{H^{k+2}} + \|u\|_{W^{3,1}(0,T;H^{k+2})} \Big)}.  \label{pp3}
\end{align}
\end{lemma}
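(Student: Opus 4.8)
The plan is to estimate $\|P_{k-1}\bs_u(t)\|$ by a duality argument based on the dual problem \eqref{ereq3}, exploiting the superconvergence of the $L^2$-projection onto $\mathcal{P}_{k-1}$. First I would fix $t \in (0,T]$, choose the data $\Theta = P_{k-1}\bs_u(t)$ in the dual problem, and set up the representation formula. The idea is to pair $\bs_u$ with the dual solution $\phi$ (or its time integral $\undertilde{\phi}$) so that, after integration by parts in time and space, the quantity $\|P_{k-1}\bs_u(t)\|^2$ appears on the left-hand side, while the right-hand side consists of terms involving the already-estimated errors $\bs_\bq$, $\bs_u - \widehat{\bs}_u$, the projection residuals $\br_u, \br_\bq$, and the nonlinear consistency term.

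The key steps, in order, are as follows. I would integrate the error equations \eqref{ee1}--\eqref{ee5} against the dual solution and its time integral, using the HDG projection commutativity and the definition of the numerical trace \eqref{eflux} to telescope the boundary terms $\langle \widehat{\bs}_u, \cdot \rangle$ and $\langle \widehat{\bs}_\bq \cdot \bn, \cdot \rangle$ exactly as in the proof of Lemma~\ref{ll2}. The crucial gain of one extra power of $h$ comes from using the BDM projection $\bf{\Pi}^{BDM}$ of $\nabla \phi$ (or of $\undertilde{\phi}$) together with property \eqref{bdm}, which lets me replace $\phi$ by its HDG/BDM projection at no cost on the relevant $\mathcal{P}_{k-1}$ components, so that the residual terms are paired against $(\,\cdot\, - \text{projection})$ and each contributes $O(h^{k+2})$ rather than $O(h^{k+1})$. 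After integrating by parts twice in time (here the endpoint conditions $\phi(t)=0$, $\phi_s(t)=\Theta$ are used to produce exactly $(\bs_u(t), P_{k-1}\bs_u(t)) = \|P_{k-1}\bs_u(t)\|^2$, since $\bs_u - P_{k-1}\bs_u \perp \mathcal{P}_{k-1}$), I would collect all remaining terms and bound them using the Cauchy--Schwarz inequality, the dual regularity bounds \eqref{p0} and \eqref{p2}, and the semidiscrete estimates from Lemmas~\ref{ll3} and \ref{lem34}. The nonlinear term $(f(u)-f(u_h), \phi)$ is handled by the splitting through $\pw u$ as in \eqref{th9}--\eqref{th131}, combined with the $L^p$ bounds of Lemma~\ref{ll2} and Corollary~\ref{coro}, so that it too is controlled by the superconvergent error quantities times $\|\Theta\|$.

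The main obstacle I anticipate is the careful time-integration-by-parts bookkeeping needed to make $\|P_{k-1}\bs_u(t)\|^2$ emerge cleanly while all boundary-in-time contributions at $s=0$ and $s=t$ are either zero (by the dual endpoint conditions and the initial estimates \eqref{estimate:IC}, \eqref{estimate:IC-0} of Lemma~\ref{IC-approx-1}) or absorbed into the superconvergent bound. In particular I must track that the initial data $\bs_u(0)$ already satisfies the $O(h^{k+2})$ estimate \eqref{estimate:IC-0}, which is what allows the boundary term at $s=0$ to be of the right order; dividing through by $\|\Theta\| = \|P_{k-1}\bs_u(t)\|$ then yields \eqref{pp3}. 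The other delicate point is ensuring the nonlinear consistency term does not lose a power of $h$: this relies on the $W^{3,1}$-regularity hypothesis feeding the time-derivative estimates of $\bs_u$ through Lemma~\ref{lem34}, so that the duality pairing against $\undertilde{\phi}$ retains full order $k+2$.
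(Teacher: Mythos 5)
Your overall architecture is the same as the paper's: a duality argument with the linearized dual problem \eqref{ereq3}, the BDM projection property \eqref{bdm} to convert $(\Delta\phi,\bs_u)$ into a pairing that the error equation \eqref{ee1} can absorb, the identity \eqref{identity} to shift a time derivative onto $\bE_{\bq}$ (which is where the $W^{3,1}$ regularity and Lemma~\ref{lem34} enter), and the initial superconvergence \eqref{estimate:IC-0} to control the $s=0$ boundary term. Taking $\Theta=P_{k-1}\bs_u(t)$ directly rather than dualizing over all $\Theta\in L^2$ is an immaterial difference.

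However, there is one genuine gap: your treatment of the nonlinear term. You propose to control $(f(u)-f(u_h),\phi)$ by the splitting through $\pw u$ as in \eqref{th9}--\eqref{th131} together with Lemma~\ref{ll2}. But \eqref{th131} only gives $\|f(u)-f(u_h)\|\leqslant C\big(\|\br_u\|+\|\bs_{\bq}\|+\|\bs_u-\widehat{\bs}_u\|_+ +\|\br_{\bq}\|\big)=O(h^{k+1})$; these quantities are optimal-order, not superconvergent, so pairing with $\|\phi\|\leqslant C\|\Theta\|$ caps the entire estimate at $O(h^{k+1})$ and the lemma fails. The extra power of $h$ for the nonlinearity does not come from the error quantities at all: it comes from the test functions. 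The paper Taylor-expands $f(u)-f(u_h)=f_u(u)(u-u_h)+O\big((u-u_h)^2\big)$ so that the linear part combines with the term $-\big(f_u(u)\phi,P_{k-1}\bs_u\big)$ that was deliberately built into the dual equation \eqref{ep1}; what survives is (i) $\big(f_u(u)(u_h-u),I_h\phi-\phi\big)$, which gains a factor $h$ from the interpolation error of $\phi$, (ii) the terms $J_6+J_7$ involving $f_u(u)\phi-P_{k-1}(f_u(u)\phi)$, which gain a factor $h$ from the $L^2$-projection error of the test function $f_u(u)\phi$ in $H^1$, and (iii) the quadratic remainder $\big(f_{uu}(u)(u-u_h)^2,I_h\phi\big)=O(h^{2k+2})$, which is $\leqslant Ch^{k+2}$ for $k\geqslant 1$ (here $\|u-u_h\|_{L^\infty}$-type control via Lemma~\ref{ll2} is also needed). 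Without this cancellation-plus-projection mechanism your argument cannot reach order $k+2$, so you should replace the \eqref{th9}--\eqref{th131} step with the linearization against the dual problem's $f_u(u)\phi$ term.
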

\begin{proof}
On multiplying $P_{k-1}\theta_u$ to \eqref{ep1} and integrating over $\Omega$,
we note that
\begin{align*}
\int_0^t \frac{d}{ds}\Big((\phi_s,P_{k-1}\theta_u)-(\phi,P_{k-1}\theta_{u_s})\Big) &= \int_0^t\Big(-(\phi,P_{k-1}\theta_{u_{ss}}) + (\Delta\phi,P_{k-1}\theta_u)-\Big( f_u(u)\phi,P_{k-1}\theta_u)\Big)\;ds.
\end{align*}
Using equation \eqref{ep5} from the dual problem, the property of projection and the property of BDM-projection \eqref{bdm}, gives
\begin{align*}
(\Theta,P_{k-1}\theta_u)&= (\phi_s(0),P_{k-1}\theta_u(0))-(\phi(0),P_{k-1}\theta_{u_s}(0))+\int_0^t\Big(-(\phi,P_{k-1}\theta_{u_{ss}}) \\&+ \big(\nabla\cdot ({\bf{\Pi}}^{BDM}\nabla\phi),\theta_u\big)-( f_u(u)\phi,P_{k-1}\theta_u)\Big)\;ds.
\end{align*}
Since, $u_{ht}(0)=\pV u_1,$  the second term on the right-hand side become zero.
For the second last term under the time integral, use of  \eqref{ee1} shows
$$\big(\nabla\cdot ({\bf{\Pi}}^{BDM}\nabla\phi),\theta_u\big) = (\pV\bq-\bq,{\bf{\Pi}}^{BDM}\nabla\phi)-(\bs_\bq,{\bf{\Pi}}^{BDM}\nabla\phi)+\langle \widehat{\theta}_u,{\bf{\Pi}}^{BDM}\nabla\phi\cdot\bn\rangle.$$
Here, the boundary term becomes zero as $\widehat\theta_u = 0 $ on $\partial\Omega$ and ${\bf{\Pi}}^{BDM}\nabla\phi \in H(div,\Omega)$. Thus, we arrive at
\begin{align*}
(\Theta,P_{k-1}\theta_u)
&= (\phi_s(0),P_{k-1}\theta_u(0))+\int_0^t\Big(-(\phi,P_{k-1}\theta_{u_{ss}})+(\bq_h-\bq,{\bf{\Pi}}^{BDM}\nabla\phi)-( f_u(u)\phi,P_{k-1}\theta_u)\Big)ds,\\
&= (\phi_s(0),P_{k-1}\theta_u(0))+\int_0^t\Big(-(P_{k-1}\phi,\theta_{u_{ss}})+(\bq_h-\bq,{\bf{\Pi}}^{BDM}\nabla\phi - \nabla I_h\phi)\Big)\;ds\\
&-\int_{0}^t \Big( (\bs_\bq,\nabla I_h\phi)-( f_u(u)\phi,P_{k-1}\theta_u)\Big)\;ds,
\end{align*}
where $I_h$ indicates the interpolation operator from $L^2(\Omega)$ into $W_h \cap H_0^1(\mathscr{T}_h)$. Using the second error equation \eqref{ee2} for the second last term in the above equation, we arrive at
\begin{align*}
(\Theta,P_{k-1}\theta_u)
&=(\phi_s(0),P_{k-1}\theta_u(0))+\int_0^t\Big( -(P_{k-1}\phi,\theta_{u_{ss}})+(\bq_h-\bq,{\bf{\Pi}}^{BDM}\nabla\phi - \nabla I_h\phi)+(\theta_{u_{ss}},I_h\phi)\\
&+(f(u)-f(u_h),I_h\phi)-(\pw u_{ss}-u_{ss},I_h\phi)-( f_u(u)\phi,P_{k-1}\theta_u)\Big)\;ds.
\end{align*}
Here, as  $\langle\widehat{\bs}_\bq,\mu\rangle = 0$ on $\mathcal{E}_h/\Gamma_\partial$ and $I_h\phi = 0$ on $\partial\Omega$, we have used $\langle\widehat{\bs}_{\bq}\cdot\bn,I_h\phi\rangle=0,$
\begin{align}
(\Theta,P_{k-1}\theta_u)
&=(\phi_s(0),P_{k-1}\theta_u(0))+ \int_0^t\Big( (\bq_h-\bq,{\bf{\Pi}}^{BDM}\nabla\phi- \nabla I_h\phi)+(u_{ss}-u_{hss},I_h\phi-P_{k-1}\phi)\Big)\nonumber\\
&+\int_0^t\Big( (f(u)-f(u_h),I_h\phi)-( f_u(u)\phi,P_{k-1}\theta_u)\Big)ds.\label{post-2}
\end{align}
For the first term in integration, use the identity,
\begin{align}
    \int_0^t h(z)g(z)dz = h(0)\undertilde{g}(0)\;dz + \int_0^t h_z(z)\undertilde{g}(z)\;dz,\label{identity}
\end{align}
and utilize Taylor's series expansion to deal with the term $f(u)-f(u_h)$ on the right-hand side  of \eqref{post-2} 
 to arrive with $ \bE_{\bq s}= \bq_s-\bq_{hs}$ at,
\begin{align}
(\Theta,P_{k-1}\bs_u)
&=(\phi_s(0),P_{k-1}\theta_u(0))- (\bE_{\bq}(0),{\bf{\Pi}}^{BDM}\nabla\undertilde{\phi}(0)- \nabla I_h\undertilde{\phi}(0))
- \int_0^t\Big((\bE_{\bq s},{\bf{\Pi}}^{BDM}\nabla\undertilde{\phi}- \nabla I_h\undertilde{\phi}) \nonumber\\
&+(u_{ss}-u_{hss},I_h\phi-P_{k-1}\phi)+(f_u(u)(u_h-u),I_h\phi-\phi)+(f_u(u)(u_h-u),\phi)\nonumber\\
&+(f_{uu}(u)(u_h-u)^2,I_h\phi)-( f_u(u)\phi,P_{k-1}\theta_u)\Big)ds.\label{post-3}
\end{align}
A use of  the property of $P_{k-1}$ in \eqref{post-3} yields
\begin{align}
(\Theta,P_{k-1}\bs_u)
=& (\phi_s(0),P_{k-1}\theta_u(0))-  (\bE_{\bq}(0),{\bf{\Pi}}^{BDM}\nabla\undertilde{\phi}(0)- \nabla I_h\undertilde{\phi}(0))-\int_0^t\Big( (\bE_{\bq s},{\bf{\Pi}}^{BDM}\nabla\undertilde{\phi}- \nabla I_h\undertilde{\phi})\nonumber\\&+(u_{ss}-u_{hss},I_h\phi-P_{k-1}\phi)+(f_u(u)(u_h-u),I_h\phi-\phi)+(f_{uu}(u)(u_h-u)^2,I_h\phi)\nonumber\\&-( f_u(u)\phi-P_{k-1}(f_u(u)\phi),P_{k-1}\bs_u+\bs_u)+(f_u(u)\phi-P_{k-1}(f_u(u)\phi),\br_u)\Big)ds\nonumber\\
 =&J_0+J_1 +\int_0^t\big( J_2 + J_3 +J_4 + J_5+J_6+J_7\big)\;ds. \label{pp1}
\end{align}
A straightforward applications of the Cauchy-Schwarz inequality  with \eqref{estimate:IC}-\eqref{estimate:IC-0}   elliptic regularity results yield
\begin{align*}
&|J_0| \leqslant \|\phi_s(0)\| \;\|\theta_u(0)\| \leqslant \;h^{k+2}\; \|u_0\|_{H^{k+2}}\;\|\phi_s(0)\| \\
&|J_1| \leqslant \|{\bf{\Pi}}^{BDM}\nabla\undertilde{\phi}(0)- \nabla I_h\undertilde{\phi}(0)\| \|\bq_h(0)-\bq(0)\|\leqslant  C\;h^{k+2} \|\undertilde{\phi}(0)\|_{H^2},
\end{align*} 
A use of  Theorem~\ref{tt2} with approximation property  in BDM projection \eqref{BDM-projection} and in interpolant shows
\begin{align*}
|J_2| \leqslant &\Big(\|{\bf{\Pi}}^{BDM}\nabla\undertilde{\phi}-\nabla \undertilde{\phi}\|+ \| \nabla (\undertilde{\phi}- I_h\undertilde{\phi})\|\Big)\; \|\bq_{ht}-\bq_t\|\\
&\leqslant C\;h^{k+2} \Big( \|u_0\|_{H^{k+2}} + \|u_1\|_{H^{k+2}} + \|u\|_{W^{3,1}(0,T;H^{k+2})} \Big)\;\|\undertilde{\phi}\|_{H^2}.
\end{align*} 
Similarly an application of Theorem ~\ref{tt2} with approximation property  in BDM projection \eqref{BDM-projection}, $L^2$ projection  and interpolant shows
\begin{align*}
&|J_3| \leqslant \| u_{tt}-u_{htt}\| (\|I_h\phi-\phi\|+\|\phi-P_{k-1}\phi\|)
\leqslant C\;h^{k+2}\Big( \|u_0\|_{H^{k+2}} + \|u_1\|_{H^{k+2}} + \|u\|_{W^{3,1}(0,T;H^{k+2})} \Big)\;\|\phi\|_{H^1},\\
&|J_4| \leqslant \| u-u_h\| \|I_h\phi-\phi\|
\leqslant C\;h^{k+2}\Big( \|u_0\|_{H^{k+2}} + \|u\|_{W^{2,1}(0,T;H^{k+2})} \Big)\;\|\phi\|_{H^1},\\
&|J_5|\leqslant \|u-u_h\|^2\|I_h\phi\|_{L^{\infty}}
\leqslant C\;h^{2k+2}\;\Big( \|u_0\|_{H^{k+2}} + \|u\|_{W^{2,1}(0,T;H^{k+2})} \Big) \;\|\phi\|_{H^1},\\
&|J_6+J_7| \leqslant C\;h\|\phi\|_{H^1}(\|\bs_u\|+\|\br_u\|)
\leqslant C\;h^{k+2}\;\Big( \|u_0\|_{H^{k+2}} + \|u\|_{W^{2,1}(0,T;H^{k+2})} \Big) \;\|\phi\|_{H^1}.
\end{align*}
Substitute all above estimate $J_0,\cdots,J_7$ in \eqref{pp1}, a use of  Proposition $4.1$ yields
\begin{align*}
(\Theta,P_{k-1}\bs_u(t)) &\leqslant  C\;h^{k+2}\Big( \textcolor{blue}{\|u_0\|_{H^{k+2}}}+ \|u_1\|_{H^{k+2}} + \|u\|_{W^{3,1}(0,T;H^{k+2})} \Big)\;\Big(\|\undertilde{\phi}(0)\|_{H^2}+\sup_{s \in (0,T)}\|\phi_s(s)\| \\&+ \int_0^t  (\|\phi\|_{H^1}+\|\undertilde{\phi}\|_{H^2})ds\Big),\\
&\leqslant  C\;h^{k+2}\Big(  \textcolor{blue}{\|u_0\|_{H^{k+2}}}+\|u_1\|_{H^{k+2}} + \|u\|_{W^{3,1}(0,T;H^{k+2})} \Big)\;\|\Theta\|.
\end{align*}
Since 
\begin{align*}
\|P_{k-1}\bs_u(t)\| = \sup_{\Theta \in L^2(\Omega)}\frac{(\Theta,P_{k-1}\bs_u(t))}{\|\Theta\|},
\end{align*}
this concludes the rest of the proof.
\end{proof}
\subsection{Proof of Theorem~\ref{th-pp}} For finding the estimate of $\|u-u_{h}^{*}\|$, we rewrite
\begin{align*}
\|u-u_{h}^{*}\|_K \leqslant \|u-P_{k+1}u\|_K+\|P_{k+1}u-u_{h}^{*}\|_K.
\end{align*}
For ${w}\in H^1(K)$ and $k\geqslant1$, we observe that
\begin{align}
    \|{w}\|_K &\leqslant \|P_0{w}\|_K+\|P_0w-w\|_K
\leqslant \|P_0w\|_K+Ch\|\nabla w\|_K.\label{wk}
\end{align}
Now, set $w = P_{k+1}u-u_{h}^{*}$ and  note that
     \begin{equation*}
         \|P_{0}w\|_K^2 = (P_{k+1}u-u_{h}^{*},P_{0}w)_K
          =(u-u_{h}^{*},P_{0}w)_K.
     \end{equation*}
     Now, with the help of  the HDG projection, we rewrite
     \begin{equation}
\|P_{0}w\|_K^2 = (\bs_u,P_{0}w)_K= (P_{k-1}\bs_u,P_{0}w)_K\Rightarrow \|P_0w\|_K\leqslant \|P_{k-1}\theta_u\|_K.\label{p0w}
     \end{equation}
For any $v\in P_{k+1}(K),$ we note that 
\begin{align}
    (\nabla w, \nabla v)_K &= (\nabla(P_{k+1}u-u_{h}^{*}),\nabla v)_K,\nonumber\\
    &= (\nabla(P_{k+1}u-u),\nabla v)_K+(\nabla(u-u_{h}^{*}),\nabla v)_K,\nonumber\\
    &= (\nabla(P_{k+1}u-u),\nabla v)_K+(\bq-\bq_h),\nabla v)_K, \label{gw}
\end{align}
by the first equation in \eqref{main1}. Hence, substituting the estimates \eqref{p0w} and \eqref{gw} in \eqref{wk}, we obtain that 
 \begin{align*}
     \|u-u_{h}^{*}\|_K \leqslant \|u-P_{k+1}u\|_K+\|P_{k-1}\bs_u\|_K+ Ch(\|\nabla(u-P_{k+1}u)\|_K+\|\bq-\bq_h\|_K).
 \end{align*}
 Now squaring both sides and then using the estimate of $\|\bq-\bq_h\|_K$ from Theorem \ref{tt2} and $\|P_{k-1}\bs_u\|_K$ from \eqref{pp3}, we complete the proof of the Theorem~\ref{th-pp}.

\section{Completely Discrete Schemes}
This section focuses on conservative and non-conservative schemes with error analyses.
\subsection{A Fully Discrete Conservative Scheme}

Let $\Delta t$ be the time step, and $t_n = n\Delta t$, where $1\leqslant n \leqslant N$ and $N\Delta t = 1$. We now define some notations for our subsequent use:
\begin{eqnarray*}
&\varphi^n = \varphi(t_n), &\varphi^{n+1/2}= \frac{1}{2}(\varphi^{n+1}+\varphi^{n}),~~~ \partial^{*}_{t}\varphi^{1/2}=\dfrac{(\varphi^{1}-\varphi^{0})}{\Delta t}\\
&\overline{\partial}_t\varphi^{n} = \dfrac{(\varphi^{n}-\varphi^{n-1})}{\Delta t}, &\partial_{t}\varphi^{n} = \dfrac{(\varphi^{n+1}-\varphi^{n})}{\Delta t},~~~~~~~~
\updelta_{t}\varphi^{n} = \dfrac{\varphi^{n+1}-\varphi^{n-1}}{2\Delta t}\\ &\mathscr{A}{\varphi}^{n}= \frac{1}{2}(\varphi^{n+1}+\varphi^{n-1}),
&\partial_t^{2}\varphi = \frac{1}{(\Delta t)^2}(\varphi^{n+1}-2\varphi^{n}+\varphi^{n-1}) ={\partial}_t \overline\partial_t \varphi^{n}
\end{eqnarray*}
The HDG  fully discretized formulation related to HDG method \eqref{hdg1}-\eqref{hdg5} reads as: Seek $(U^n,\bQ^n,\widehat{U}^n)\in W_h\times \bV_h\times M_h$ such that 
\begin{subequations}\label{ereq2}
\begin{align}
&\frac{2}{\Delta t}(\partial^{*}_{t}U^{1/2},w_h)+(\bQ^{1/2},\nabla w_h)-\langle\widehat {\bQ}^{1/2}\cdot\bn,w_h\rangle+\big(\mathscr{F}(U^{1},U^{0}),w_h\big)-\frac{2}{\Delta t}(u_1,w_h)=0,\label{ep17}\\
&(\partial_{t}^2U^n,w_h)+(\mathscr{A}{\bQ}^{n},\nabla w_h)-\langle\mathscr{A}{\widehat {\bQ}}^{n}\cdot\bn,w_h\rangle+\big(\mathscr{F}(U^{n+1},U^{n-1}),w_h\big)=0,\quad\forall~~  n\geqslant 1, \label{ep11}\\
&(\bQ^{n},\bv_h)+(U^{n},\nabla\cdot\bv_h)-\langle\widehat{U}^{n},\bv_h\cdot\bn\rangle=0,\quad\forall~~ n\geqslant 0, \label{ep12}\\
&\langle \widehat{\bQ}^{n}\cdot\bn,\mu_1\rangle_{\mathcal{E}_h/\Gamma_\partial}=0,\quad\forall~~ n\geqslant 0, \label{ep14}\\
&\langle \widehat{U}^{n},\mu_2\rangle_{\Gamma_\partial}=0,\quad\forall~~ n\geqslant 0,\label{ep15}
\end{align}
\end{subequations}
for all $(w_h,\bv_h,\mu_1, \mu_2)\in W_h\times\bV_h\times M_h\times M_h$, where, 
\begin{equation}
    \widehat{\bQ}^{n}\cdot\bn=\bQ^{n}\cdot\bn- \tau (U^n-\widehat{U}^n)\;\text{ on } \mathcal{E}_h,\label{ep13}
\end{equation}
and the non-linear term is defined as: \[\mathscr{F}(U^{n+1},U^{n-1})=\begin{cases} \dfrac{F(U^{n+1})-F(U^{n-1})}{U^{n+1}-U^{n-1}}, &U^{n+1}\neq U^{n-1}\\ F^{\prime}(U^{n+1}),&U^{n+1}= U^{n-1} \end{cases} \] with $F(U)=\frac{1}{4}(1-U^2)^2$.
Given initial values $U^0$, $Q^0$, and $\widehat{U}^0$, we can determine $U^1$, $Q^1$, and $\widehat{U^1}$ using equations \eqref{ep17} and \eqref{ep12}.
\subsubsection{Discrete energy conservative property. }  Below, we establish the following discrete conservation law for the energy of the fully discrete scheme \eqref{ereq2}.
\begin{theorem}\label{th5}
The fully discrete HDG approximation $(U^{n+1},\bQ^{n+1},\widehat{U}^{n+1})\in W_h\times \bV_h\times M_h$ given by \eqref{ep11}-\eqref{ep14} satisfies the following fully discrete energy conservation property for $n\geqslant 1$,
\begin{eqnarray}
\mathbb{E}^{n+1/2}=\mathbb{E}^{3/2},
\end{eqnarray}
where,
\begin{eqnarray*}
    \mathbb{E}^{n+1/2}=\|\partial_tU^{n}\|^2+\frac{1}{2}\Big(\|\bQ^{n}\|^2+\|\bQ^{n+1}\|^2+\|\tau^{1/2}(U^{n}-\widehat{U}^{n})\|^2+\|U^{n+1}-\widehat{U}^{n+1}\|_+^2\\+2\big(F(U^{n}),1\big)+2\big(F(U^{n+1}),1\big)\Big).
\end{eqnarray*}
\end{theorem}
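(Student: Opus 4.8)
The plan is to establish the one-step identity $\mathbb{E}^{n+1/2}=\mathbb{E}^{n-1/2}$ for every $n\geqslant 2$ and then telescope it down to the base level $\mathbb{E}^{3/2}$; since the case $n=1$ is vacuous, this yields $\mathbb{E}^{n+1/2}=\mathbb{E}^{3/2}$ for all $n\geqslant 1$. The discrete argument mirrors the continuous manipulation used in the proof of Theorem~\ref{2.1} and is driven by two test-function choices. First, in the leapfrog equation \eqref{ep11} at level $n$ I take $w_h=\updelta_t U^n=(U^{n+1}-U^{n-1})/(2\Delta t)$. Second, I subtract the flux equation \eqref{ep12} at levels $n+1$ and $n-1$, divide by $2\Delta t$, and test the result with $\bv_h=\mathscr{A}\bQ^n=\tfrac12(\bQ^{n+1}+\bQ^{n-1})$; this is the discrete counterpart of differentiating \eqref{hdg1} in time and adding it to \eqref{hdg2}. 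Adding the two resulting scalar equations produces four groups of terms.

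Three of these telescope by elementary discrete identities. Writing $\partial_t U^n=(U^{n+1}-U^n)/\Delta t$, one checks $(\partial_t^2U^n,\updelta_t U^n)=\tfrac{1}{2\Delta t}\big(\|\partial_tU^n\|^2-\|\partial_tU^{n-1}\|^2\big)$ and $(\updelta_t\bQ^n,\mathscr{A}\bQ^n)=\tfrac{1}{4\Delta t}\big(\|\bQ^{n+1}\|^2-\|\bQ^{n-1}\|^2\big)$. The decisive term is the nonlinear one: by the very definition of $\mathscr{F}$ as the difference quotient of $F$, \[\big(\mathscr{F}(U^{n+1},U^{n-1}),\updelta_tU^n\big)=\Big(\frac{F(U^{n+1})-F(U^{n-1})}{U^{n+1}-U^{n-1}},\frac{U^{n+1}-U^{n-1}}{2\Delta t}\Big)=\frac{1}{2\Delta t}\big((F(U^{n+1}),1)-(F(U^{n-1}),1)\big),\] the identity remaining valid on the set $\{U^{n+1}=U^{n-1}\}$ since both sides vanish there. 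This exact telescoping of the potential energy is precisely why the scheme is built with this discretization of $f$.

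The main work is the interface/flux coupling, handled exactly as in Theorem~\ref{2.1}. Integration by parts element-by-element gives $(\mathscr{A}\bQ^n,\nabla\updelta_tU^n)+(\updelta_tU^n,\nabla\cdot\mathscr{A}\bQ^n)=\langle\updelta_tU^n,\mathscr{A}\bQ^n\cdot\bn\rangle$, so after adding the two tested equations the volume coupling cancels and only the boundary terms $\langle\updelta_tU^n,\mathscr{A}\bQ^n\cdot\bn\rangle-\langle\mathscr{A}\widehat{\bQ}^n\cdot\bn,\updelta_tU^n\rangle-\langle\updelta_t\widehat{U}^n,\mathscr{A}\bQ^n\cdot\bn\rangle$ survive. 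Inserting the numerical-trace relation \eqref{ep13} averaged over the levels $n\pm1$, namely $\mathscr{A}\widehat{\bQ}^n\cdot\bn=\mathscr{A}\bQ^n\cdot\bn-\tau(\mathscr{A}U^n-\mathscr{A}\widehat{U}^n)$, the first two boundary terms cancel and a $\tau$-term appears. For the last term I again substitute \eqref{ep13} and then use the conservativity condition \eqref{ep14} (applied at levels $n\pm1$ with the single-valued test function $\updelta_t\widehat{U}^n\in M_h$ on interior faces) together with the boundary condition \eqref{ep15} (which forces $\updelta_t\widehat{U}^n=0$ on $\Gamma_\partial$) to conclude $\langle\updelta_t\widehat{U}^n,\mathscr{A}\widehat{\bQ}^n\cdot\bn\rangle=0$. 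What remains collapses to $\langle\tau\,\mathscr{A}(U^n-\widehat{U}^n),\updelta_t(U^n-\widehat{U}^n)\rangle=\tfrac{1}{4\Delta t}\big(\|U^{n+1}-\widehat{U}^{n+1}\|_+^2-\|U^{n-1}-\widehat{U}^{n-1}\|_+^2\big)$, via the pointwise identity $(a+b)(a-b)=a^2-b^2$ with $a=U^{n+1}-\widehat{U}^{n+1}$ and $b=U^{n-1}-\widehat{U}^{n-1}$. I expect this interface bookkeeping, in particular the correct vanishing of $\langle\updelta_t\widehat{U}^n,\mathscr{A}\widehat{\bQ}^n\cdot\bn\rangle$, to be the only delicate step.

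Collecting the four telescoped contributions and multiplying through by $2\Delta t$ gives exactly $\mathbb{E}^{n+1/2}-\mathbb{E}^{n-1/2}=0$ (recalling that $\|\tau^{1/2}(U^n-\widehat{U}^n)\|^2=\|U^n-\widehat{U}^n\|_+^2$, so the face-norm and flux differences match the definition of $\mathbb{E}^{n+1/2}$). Iterating this identity from $n=2$ upward telescopes every $\mathbb{E}^{n+1/2}$ back to $\mathbb{E}^{3/2}$, which is the assertion of the theorem.
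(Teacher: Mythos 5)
Your proposal is correct and follows essentially the same route as the paper: test \eqref{ep11} with $\updelta_t U^n$, test the time-differenced flux equation \eqref{ep12} with $\mathscr{A}\bQ^n$, add, and let the kinetic, flux, stabilization and potential terms telescope, with the interface terms handled via \eqref{ep13}--\eqref{ep15} exactly as in the paper's proof. The only difference is cosmetic (you observe directly that the resulting identity is $\mathbb{E}^{n+1/2}-\mathbb{E}^{n-1/2}=0$, whereas the paper adds the common level-$n$ terms to both sides before iterating), and the paper additionally derives an expression for $\mathbb{E}^{3/2}$ from the starting equation \eqref{ep17}, which is not needed for the stated identity.
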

\begin{proof}
Taking the difference between two levels, i.e., between $t=t_{n+1}$ and $t=t_{n-1}$ and then dividing by $2\Delta t$ in \eqref{ep12}, we find that
\begin{eqnarray}
(\updelta_t\bQ^{n},\bv_h)+(\updelta_tU^{n},\nabla\cdot\bv_h)-\langle\updelta_t\widehat{U}^{n},\bv_h\cdot\bn\rangle&=&0 \label{epp12}
\end{eqnarray}
Now, choose $\bv_h=\mathscr{A}{\bQ}^{n}$, and $w_h=\updelta_t U^n$ in equations \eqref{ep11} and \eqref{epp12} respectively, then add resulting equations
\begin{align*}
(\partial_{t}^2U^n,\updelta_t U^n)+(\updelta_t\bQ^{n},\mathscr{A}{\bQ}^{n})&+(\updelta_tU^{n},\nabla\cdot\mathscr{A}{\bQ}^{n})-\langle\updelta_t\widehat{U}^{n},\mathscr{A}{\bQ}^{n}\cdot\bn\rangle+(\mathscr{A}{\bQ}^{n},\nabla\updelta_t U^n)\\&-\langle\mathscr{A}{\widehat {\bQ}}^{n}\cdot\bn,\updelta_t U^n\rangle+\big(\mathscr{F}(U^{n+1},U^{n-1}),\updelta_t U^n\big)=0.
\end{align*}
An application of \eqref{ep14} with \eqref{ep15} and \eqref{ep13} yields
\begin{align*}
  (\partial_{t}^2U^n,\updelta_t U^n)+(\updelta_t\bQ^{n},\mathscr{A}{\bQ}^{n})+\tau\langle \updelta_t{U}^{n}-\updelta_t\widehat{U}^{n}, \mathscr{A}{U}^n-\mathscr{A}{\widehat U}^n\rangle  +\big(\mathscr{F}(U^{n+1},U^{n-1}),\updelta_t U^n\big) = 0,
\end{align*}
and hence,  we arrive  at 
\begin{align*}
\frac{1}{2\Delta t}\Big(\|\partial_tU^{n}\|^2-\|\partial_tU^{n-1}\|^2&+\frac{1}{2}\Big(\|\bQ^{n+1}\|^2-\|\bQ^{n-1}\|^2+\|U^{n+1}-\widehat{U}^{n+1}\|_+^2\\&-\|U^{n-1}-\widehat{U}^{n-1}\|_+^2\Big) +\big(\mathscr{F}(U^{n+1},U^{n-1}\big),\updelta_t U^n)=0.
\end{align*}
From the definition of $\updelta_t$, it follows that
\begin{align*}
    \big(\mathscr{F}(U^{n+1},U^{n-1}\big),\updelta_t U^n)&=\Big(\dfrac{F(U^{n+1}-F(U^{n-1}))}{U^{n+1}-U^{n-1}},\dfrac{U^{n+1}-U^{n-1}}{2\Delta t}\Big)\\& = \dfrac{1}{2\Delta t}\big(F(U^{n+1})-F(U^{n-1}),1\big).
\end{align*}
Altogether, we obtain
\begin{align}\label{eppp13}
\|\partial_tU^{n}\|^2+\dfrac{1}{2}\big(\|\bQ^{n+1}\|^2&+\|U^{n+1}-\widehat{U}^{n+1}\|_+^2\big)+\big(F(U^{n+1}),1\big)=\|\partial_tU^{n-1}\|^2\nonumber\\&+\dfrac{1}{2}\big(\|\bQ^{n-1}\|^2+\|U^{n-1}-\widehat{U}^{n-1}\|_+^2\big)+\big(F(U^{n-1}),1\big).
\end{align}
By adding 
$$\frac{1}{2}\Big(\|\bQ^{n}\|^2+\|U^{n}-\widehat{U}^{n}\|_+^2+2\big(F(U^{n}),1\big)\Big)$$
 on both sides of \eqref{epp13}, we derive the energy conservation property:
  $$E^{n+1/2}=E^{n-1/2}=...=E^{3/2}.$$
Solving equations \eqref{ep17} and \eqref{ep12} at $n=0$ and add 
$$\frac{1}{2}\Big(\|\bQ^{0}\|^2+\|U^{0}-\widehat{U}^{0}\|_+^2+2\big(F(U^{0}),1\big)\Big)$$ 
on both sides to arrive at
\begin{align}\label{eppp14}
     2\|\partial_t U^{0}\|^2&+\frac{1}{2}\Big(\|\bQ^{0}\|^2+\|\bQ^{1}\|^2+\|U^{0}-\widehat{U}^{0}\|_+^2+\|U^{1}-\widehat{U}^{1}\|_+^2\Big)+\big(F(U^{0}),1\big)\nonumber\\&+\big(F(U^{1}),1\big)=\|\bQ^{0}\|^2+\|U^{0}-\widehat{U}^{0}\|_+^2+2\big(F(U^{0}),1\big)+2(u_1,\partial_tU^{0}).
\end{align}
 From \eqref{eppp14}, we obtain the relation 
 \begin{eqnarray*}
     \mathbb{E}^{3/2}=-\|\partial_tU^{0}\|^2+\|\bQ^{0}\|^2+\|U^{0}-\widehat{U}^{0}\|_+^2+2\big(F(U^{0}),1\big)+2(u_1,\partial_tU^{0}).
\end{eqnarray*}
With the  initial energy at $n=0$ given by 
$$
\mathcal{E}^0=2\|\partial_t U^{0}\|^2+\frac{1}{2}\Big(\|\bQ^{0}\|^2+\|\bQ^{1}\|^2+\|U^{0}-\widehat{U}^{0}\|_+^2+\|U^{1}-\widehat{U}^{1}\|_+^2\Big)+\big(F(U^{0}),1\big)+\big(F(U^{1}),1\big),
$$
 it is observe that $\mathbb{E}^{3/2}\leqslant \mathcal{E}^0$ and this concludes the rest of the proof.
\end{proof}
The next lemma helps us to deal with uniqueness and error estimates.
\begin{lemma}\label{N-Linear}
    For  $\Phi_1, ~\Phi_2,~\Psi_1,~\Psi_2\in L^6(\Omega),$ there holds
    \begin{align*}
        \|\mathscr{F}(\Phi_1,\Psi_1)-\mathscr{F}(\Phi_2,\Psi_2)\|^2\leqslant C\;\big(\|\Phi_1-\Phi_2\|^2_{L^6}+\|\Psi_1-\Psi_2\|^2_{L^6}\big)\big(\|\Phi_1\|^2_{L^6}+\|\Psi_1\|^2_{L^6}+\|\Phi_2\|^2_{L^6}+\|\Psi_2\|^2_{L^6}+1\big).
    \end{align*}
\end{lemma}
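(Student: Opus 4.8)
The plan is to begin by removing the apparent case distinction in the definition of $\mathscr{F}$. Since $F(s)=\tfrac14(1-s^2)^2$ is a polynomial, the divided difference is itself a polynomial: writing $F(a)-F(b)=-\tfrac12(a^2-b^2)+\tfrac14(a^4-b^4)$ and using $a^2-b^2=(a-b)(a+b)$ together with $a^4-b^4=(a-b)(a+b)(a^2+b^2)$, one obtains
\[
\mathscr{F}(a,b)=\tfrac14(a+b)(a^2+b^2)-\tfrac12(a+b)\qquad\text{for all }a,b\in\mathbb{R},
\]
and this expression reduces to $F'(a)=a^3-a$ when $a=b$. Hence $\mathscr{F}$ is a globally smooth (cubic) function and the piecewise definition is irrelevant for the estimate; this is the observation that makes everything else routine.

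Next I would expand the difference $\mathscr{F}(\Phi_1,\Psi_1)-\mathscr{F}(\Phi_2,\Psi_2)$ and factor out the two increments. Telescoping each monomial of the cubic part (for instance $\Phi_1^2\Psi_1-\Phi_2^2\Psi_2=\Psi_1(\Phi_1-\Phi_2)(\Phi_1+\Phi_2)+\Phi_2^2(\Psi_1-\Psi_2)$) lets one write
\[
\mathscr{F}(\Phi_1,\Psi_1)-\mathscr{F}(\Phi_2,\Psi_2)=A\,(\Phi_1-\Phi_2)+B\,(\Psi_1-\Psi_2),
\]
where $A$ and $B$ are quadratic polynomials in $\Phi_1,\Phi_2,\Psi_1,\Psi_2$; equivalently, this is the mean value theorem applied to the smooth map $\mathscr{F}$ along the segment joining $(\Phi_2,\Psi_2)$ to $(\Phi_1,\Psi_1)$, whose gradient is quadratic. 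Bounding $|A|,|B|$ by a quadratic then yields the pointwise estimate
\[
|\mathscr{F}(\Phi_1,\Psi_1)-\mathscr{F}(\Phi_2,\Psi_2)|\leqslant C\big(|\Phi_1|^2+|\Phi_2|^2+|\Psi_1|^2+|\Psi_2|^2+1\big)\big(|\Phi_1-\Phi_2|+|\Psi_1-\Psi_2|\big).
\]

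Finally I would square this bound, integrate over $\Omega$, and split the resulting product by Hölder's inequality with conjugate exponents $\tfrac32$ and $3$: the squared quadratic coefficient is placed in $L^{3/2}$ and, via the identity $\||g|^2\|_{L^3}=\|g\|_{L^6}^2$, contributes $L^6$ norms of $\Phi_i,\Psi_i$, while the squared increment factor is placed in $L^3$ and contributes $\|\Phi_1-\Phi_2\|_{L^6}^2+\|\Psi_1-\Psi_2\|_{L^6}^2$ after a triangle inequality. Collecting the terms yields the claimed inequality. I expect the only real obstacle to be the Hölder bookkeeping --- choosing the exponents so that precisely $L^6$ norms appear on both factors --- together with the accompanying observation that the quadratic coefficient factor naturally comes out to one power higher than displayed, which is reconciled to the stated first-power form using the uniform $L^6$ bounds on the discrete iterates supplied by Corollary~\ref{coro} when this lemma is applied.
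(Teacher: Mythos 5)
Your proposal is correct and follows essentially the same route as the paper: write $\mathscr{F}(a,b)=\tfrac14(a+b)(a^2+b^2-2)$ explicitly so the case distinction disappears, telescope the difference into increments multiplied by quadratic coefficients, and split the integral with H\"older so that only $L^6$ norms appear. Your closing caveat is also on target --- the coefficient factor genuinely comes out one power higher than displayed (equivalently, with fourth powers of the $L^6$ norms, which is exactly the form the paper itself uses when applying the lemma in \eqref{N-1}), so the first-power form of the statement is only recovered via the uniform $L^6$ bounds on the discrete iterates.
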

\begin{proof}
    From the definition of $\mathscr{F}(U,V)$ with $F(U)=\frac{1}{4}(1-U^2)^2$, it follows that 
    \[\mathscr{F}(\Phi_1,\Psi_1)=\frac{1}{4}(\Phi_1+\Psi_1)(\Phi_1^2+\Psi_1^2-2),\]
 and   therefore,
    \begin{align*}
        \mathscr{F}(\Phi_1,\Psi_1)-\mathscr{F}(\Phi_2,\Psi_2)&=\dfrac{1}{4}\big(\Phi_1^3-\Phi_2^3+\Psi_1^3-\Psi_2^3+\Phi_1 \Psi_1^2-\Phi_2 \Psi_2+\Phi_1^2 \Psi_1-\Phi_2^2 \Psi_2\\
        &-2(\Phi_1-\Phi_2)-2(\Psi_1-\Psi_2)\big)
    \end{align*}
    Adding and subtracting $\Phi_2 \Psi_1^2$ and $\Phi_1^2 \Psi_2$, we obtain
  \begin{align}
        \mathscr{F}(\Phi_1,\Psi_1)-\mathscr{F}(\Phi_2, \Psi_2)&=\dfrac{1}{4}\Big(\big(\Phi_1^3-\Phi_2^3\big)+\Psi_1^2\big(\Phi_1-\Phi_2\big)+\Psi_2\big(\Phi_1^2-\Phi_2^2\big)-2(\Phi_1-\Phi_2)\nonumber\\
        &+\big(\Psi_1^3-\Psi_2^3\big)+\Phi_1^2\big(\Psi_1-\Psi_2\big)
        +\Phi_2\big(\Psi_1^2-\Psi_2^2\big)-2(\Psi_1-\Psi_2)\Big)
    \end{align}
 A use of the Cauchy-Schwarz inequality with the property of $L^p$ bounds yields    
 \begin{align}
        \|\mathscr{F}(\Phi_1,\Psi_1)-\mathscr{F}(\Phi_2,\Psi_2)\|^2
        &\leqslant \frac{1}{16}\|\Big(\big(\Phi_1-\Phi_2\big)+\big(\Psi_1-\Psi_2\big)\Big)\big(\Phi_1+\Psi_1+\Phi_2+\Psi_2+2\big)^2\|^2 \nonumber\\
        &\leqslant C\;\big(\|\Phi_1-\Phi_2\|^2_{L^6}+\|\Psi_1-\Psi_2\|^2_{L^6}\big)\big(\|\Phi_1\|^2_{L^6}\nonumber\\
        &+\|\Psi_1\|^2_{L^6}+\|\Phi_2\|^2_{L^6}+\|\Psi_2\|^2_{L^6}+1\big),
    \end{align}
and this completes the rest of the proof.
    \end{proof}
\subsection{Well-posedness of Discrete Problem}
The discrete system \ref{ereq2} gives rise to a system of nonlinear algebraic equations, requiring a discussion of its well-posedness using the following variant of the Brouwer fixed point theorem.
\begin{lemma}[The Brouwer fixed point theorem \cite{kesavan2015topics}]\label{BFT}
    Let $X$ be a finite-dimensional Hilbert space with inner product $\langle\cdot,\cdot\rangle$ and norm $\|\cdot\|$. Further let $\mathscr{H}: X\rightarrow X$ be a continuous map such that $\big(\mathscr{H}(x),x\big)>0~ \forall x\in X$ with $\|x\|_X=R>0.$ Then, there exist $x^* \in X$ with $\|x^*\|_X<R$ such that $\mathscr{H}(x^*)=0.$
\end{lemma}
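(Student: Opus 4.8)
The plan is to argue by contradiction and reduce the statement to the classical Brouwer fixed point theorem for a continuous self-map of a closed ball. Write $\bar B_R = \{x \in X : \|x\| \leqslant R\}$ for the closed ball; since $X$ is finite-dimensional, $\bar B_R$ is a compact convex set homeomorphic to a closed Euclidean ball, so Brouwer's theorem applies to any continuous map $\bar B_R \to \bar B_R$.

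First I would suppose, for contradiction, that $\mathscr{H}(x) \neq 0$ for every $x \in \bar B_R$. Under this assumption the map
$$g(x) = -R\,\frac{\mathscr{H}(x)}{\|\mathscr{H}(x)\|}$$
is well defined and continuous on $\bar B_R$ (continuity of $\mathscr{H}$ together with $\|\mathscr{H}(x)\| > 0$), and it sends $\bar B_R$ into itself because $\|g(x)\| = R$ for all $x$.

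Then I would invoke Brouwer's fixed point theorem to obtain $x_0 \in \bar B_R$ with $g(x_0) = x_0$. Since $\|g(x_0)\| = R$, the fixed point lies on the boundary sphere, i.e.\ $\|x_0\| = R$. Substituting the definition of $g$ and using $x_0 = g(x_0)$,
$$(\mathscr{H}(x_0), x_0) = (\mathscr{H}(x_0), g(x_0)) = -R\,\frac{(\mathscr{H}(x_0), \mathscr{H}(x_0))}{\|\mathscr{H}(x_0)\|} = -R\,\|\mathscr{H}(x_0)\| < 0,$$
which contradicts the hypothesis $(\mathscr{H}(x), x) > 0$ for all $x$ with $\|x\| = R$. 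Hence the supposition fails and $\mathscr{H}$ must vanish at some $x^* \in \bar B_R$. The sign condition also forces $(\mathscr{H}(x), x) > 0 \neq 0$ whenever $\|x\| = R$, so $\mathscr{H}$ cannot vanish on the boundary; therefore $\|x^*\| < R$, which is exactly the asserted conclusion.

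The step I expect to require the most care is the reduction itself: verifying that the normalized map $g$ is a legitimate continuous self-map of the compact convex ball so that Brouwer applies, and correctly locating the resulting fixed point on the boundary sphere. Everything after that is a short computation exploiting the strict sign hypothesis.
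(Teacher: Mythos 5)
Your argument is correct and complete: assuming $\mathscr{H}$ has no zero in the closed ball, the normalized map $g(x)=-R\,\mathscr{H}(x)/\|\mathscr{H}(x)\|$ is a continuous self-map of the compact convex set $\bar B_R$, its Brouwer fixed point necessarily lies on the sphere $\|x_0\|=R$, and the computation $(\mathscr{H}(x_0),x_0)=-R\|\mathscr{H}(x_0)\|<0$ contradicts the sign hypothesis; the strict inequality on the sphere then forces the zero into the open ball. The paper does not prove this lemma at all — it is stated with a citation to Kesavan and used as a black box in Theorem 5.3 — so there is no in-paper proof to compare against; what you have written is the standard textbook derivation of this corollary from the classical Brouwer fixed point theorem, and it fills that gap correctly.
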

\begin{theorem} [Existence and Uniqueness Result]\label{EU-DS}
Given $(U^m,\bQ^m,\widehat{U}^m)\in W_h\times \bV_h\times M_h$ for $m=0,1,\cdots,n,$  there exists a unique discrete solution triplet $(U^{n+1},\bQ^{n+1},\widehat{U}^{n+1})\in W_h\times \bV_h\times M_h$ to \eqref{ereq2}.
\end{theorem}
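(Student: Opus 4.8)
The plan is to establish existence through the Brouwer fixed point theorem (Lemma~\ref{BFT}) and uniqueness through an energy estimate on the difference of two solutions. I would first reduce the coupled system at level $n+1$ to a single nonlinear equation for the unknown $U^{n+1}\in W_h$. For any $V\in W_h$, the equations \eqref{ep12}, \eqref{ep14}, \eqref{ep15} together with the numerical flux \eqref{ep13} (all at level $n+1$, with $U^{n+1}$ replaced by $V$) form a square linear system for the pair $(\bQ[V],\widehat U[V])\in\bV_h\times M_h$. Testing this local solver with $\bv_h=\bQ[V]$, integrating by parts, using $\langle\widehat{\bQ}[V]\cdot\bn,\widehat U[V]\rangle=0$ and $\widehat U[V]|_{\Gamma_\partial}=0$ exactly as in the proof of Theorem~\ref{th5}, I obtain the coercivity identity $a_h(V,V):=(\bQ[V],\nabla V)-\langle\widehat{\bQ}[V]\cdot\bn,V\rangle=\|\bQ[V]\|^2+\|V-\widehat U[V]\|_+^2$. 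Since $\tau>0$, this shows the homogeneous local problem has only the trivial solution, so $(\bQ[V],\widehat U[V])$ is well defined and linear in $V$.

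Next I would define the continuous map $\mathscr H\colon W_h\to W_h$ by Riesz representation of the residual of \eqref{ep11}, namely
\[(\mathscr H(V),w_h)=\tfrac{1}{(\Delta t)^2}(V-2U^n+U^{n-1},w_h)+(\mathscr A\bQ^n,\nabla w_h)-\langle\mathscr A\widehat{\bQ}^n\cdot\bn,w_h\rangle+(\mathscr F(V,U^{n-1}),w_h),\]
with $\mathscr A\bQ^n=\tfrac12(\bQ[V]+\bQ^{n-1})$ and $\mathscr A\widehat{\bQ}^n=\tfrac12(\widehat{\bQ}[V]+\widehat{\bQ}^{n-1})$, so that $\mathscr H(V)=0$ is equivalent to $V=U^{n+1}$ solving the full system. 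Testing with $w_h=V$ and using the identity above gives $(\mathscr H(V),V)\ge\tfrac{1}{(\Delta t)^2}\|V\|^2+\tfrac12 a_h(V,V)+(\mathscr F(V,U^{n-1}),V)-C\|V\|$, where every remaining contribution is linear in $V$ and hence bounded by $C\|V\|$ (with $C$ depending on $h$, $\Delta t$ and the data). The crucial point is that, since $\mathscr F(V,U^{n-1})=\tfrac14(V+U^{n-1})(V^2+(U^{n-1})^2-2)$, the quartic part contributes $\tfrac14\|V\|_{L^4}^4$ and all lower-order parts are absorbed by Young's inequality, giving $(\mathscr F(V,U^{n-1}),V)\ge\tfrac18\|V\|_{L^4}^4-C$. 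Thus $(\mathscr H(V),V)\ge\tfrac{1}{(\Delta t)^2}\|V\|^2-C(1+\|V\|)>0$ on $\|V\|=R$ for $R$ large, so Lemma~\ref{BFT} yields $V^*$ with $\mathscr H(V^*)=0$; setting $U^{n+1}=V^*$, $\bQ^{n+1}=\bQ[V^*]$, $\widehat U^{n+1}=\widehat U[V^*]$ gives a solution. The first step producing $U^1$ from \eqref{ep17} is handled identically, with the coefficient $\tfrac{2}{(\Delta t)^2}$ replacing the mass term.

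For uniqueness I would take two solutions, subtract the equations, and write $e_U,e_{\bQ},e_{\widehat U}$ for the differences; the history terms cancel and $(e_{\bQ},e_{\widehat U})$ is precisely the local solver output for $e_U$. Testing the difference of \eqref{ep11} with $w_h=e_U$ and invoking the coercivity identity again gives
\[\tfrac{1}{(\Delta t)^2}\|e_U\|^2+\tfrac12\big(\|e_{\bQ}\|^2+\|e_U-e_{\widehat U}\|_+^2\big)+\big(\mathscr F(U^{n+1}_1,U^{n-1})-\mathscr F(U^{n+1}_2,U^{n-1}),e_U\big)=0.\]
By Lemma~\ref{N-Linear} (with $\Psi_1=\Psi_2=U^{n-1}$) one gets $\|\mathscr F(U^{n+1}_1,U^{n-1})-\mathscr F(U^{n+1}_2,U^{n-1})\|\le C\|e_U\|_{L^6}$, the constant being finite because both discrete solutions are bounded uniformly through Theorem~\ref{th5} and Lemma~\ref{ll2}. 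Since the difference of \eqref{ep12} has zero right-hand side, Lemma~\ref{ll2} yields $\|e_U\|_{L^6}\le C(\|e_{\bQ}\|+\|e_U-e_{\widehat U}\|_+)$, and a Young's inequality then absorbs half of the coercive terms, leaving $\big(\tfrac{1}{(\Delta t)^2}-C\big)\|e_U\|^2+\tfrac14(\|e_{\bQ}\|^2+\|e_U-e_{\widehat U}\|_+^2)\le0$.

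\emph{The main difficulty} is that, because $f(s)=s^3-s$ and $F(s)=\tfrac14(1-s^2)^2$ is a non-convex double-well potential, $\mathscr F$ is not monotone, so the pairing of the nonlinear difference with $e_U$ carries no definite sign and cannot simply be dropped. I therefore expect uniqueness to require a mild time-step restriction $\tfrac{1}{(\Delta t)^2}>C$, with $C$ the data-dependent but $h$- and $n$-independent constant above; under this condition the three nonnegative terms all vanish, forcing $e_U=0$, $e_{\bQ}=0$ and, since $\tau>0$, $e_{\widehat U}=0$. For the existence part the non-monotonicity is harmless, since the quartic term entering the Brouwer coercivity has the favourable sign.
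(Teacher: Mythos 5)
Your proof is correct and rests on the same two pillars as the paper's --- a Brouwer fixed-point argument via Lemma~\ref{BFT} for existence, driven by the HDG coercivity identity and the favourable sign of the quartic part of $\mathscr{F}$, and an energy/kickback argument with a time-step restriction for uniqueness --- but it is organized differently. For existence, the paper applies Lemma~\ref{BFT} to the full averaged triple $\mathscr{W}=(\mathscr{A}U^{n},\mathscr{A}\bQ^{n},\mathscr{A}\widehat{U}^{n})$ in $W_h\times\bV_h\times M_h$, equipped with the weighted norm $\opnorm{\mathscr{W}}^2=\|W_1\|^2+\tfrac{(\Delta t)^2}{2}\big(\|\mathscr{A}\bQ^{n}\|^2+\|W_1-\widehat{W}_1\|_+^2\big)$, and obtains positivity on the sphere of radius $R=2\|U^n\|+1$, independent of $h$; you instead condense out $(\bQ,\widehat{U})$ through the HDG local solver and run the fixed point only in $W_h$, which mirrors how the scheme is actually implemented but makes your radius depend on $h$ through the inverse inequality needed to control $(\bQ^{n-1},\nabla V)$ --- harmless for existence, though it forfeits the $h$-uniformity of the paper's bound. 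Your verification of the local solver's invertibility and of the identity $a_h(V,V)=\|\bQ[V]\|^2+\|V-\widehat{U}[V]\|_+^2$ is exactly the integration-by-parts computation the paper performs inside its positivity estimate, so nothing is missing there. For uniqueness, since the theorem fixes the history up to level $n$, your single-level identity
\[
\tfrac{1}{(\Delta t)^2}\|e_U\|^2+\tfrac12\big(\|e_{\bQ}\|^2+\|e_U-e_{\widehat{U}}\|_+^2\big)+\big(\mathscr{F}(U^{n+1}_1,U^{n-1})-\mathscr{F}(U^{n+1}_2,U^{n-1}),e_U\big)=0,
\]
closed with Lemma~\ref{N-Linear}, the $L^6$ bound of Lemma~\ref{ll2} and the energy bound of Theorem~\ref{th5}, is precisely the paper's treatment of the first step $n=1$ promoted to a general level; the paper instead lets entire solution sequences differ, sums from $n=1$ to $m$, and invokes the discrete Gr\"{o}nwall lemma, which proves more than the statement strictly requires. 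Both routes need the same kind of smallness condition on $\Delta t$ (your $\tfrac{1}{(\Delta t)^2}>C$ versus the paper's $2C\Delta t<1$), and your diagnosis that this restriction is forced by the non-monotonicity of the double-well nonlinearity, while the quartic sign is what rescues the Brouwer coercivity, matches the mechanism in the paper exactly.
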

\begin{proof}
In order to apply Lemma ~\ref{BFT}, now
re-write system \eqref{ereq2} as
\begin{subequations}\label{exi01}
\begin{align}
     (\dfrac{2\mathscr{A}{U}^{n}-2U^n}{\Delta t^2},w_h)+(\mathscr{A}{\bQ}^{n},\nabla w_h)-\langle\mathscr{A}{\widehat {\bQ}}^{n}\cdot\bn,w_h\rangle+\big(\mathscr{F}(2\mathscr{A}{U}^{n}-U^{n-1},U^{n-1}),w_h\big)=0,\label{exi1}\\
    (\mathscr{A}\bQ^{n},\bv_h)+(\mathscr{A}U^{n},\nabla\cdot\bv_h)-\langle\mathscr{A}\widehat{U}^{n},\bv_h\cdot\bn\rangle=0,\label{exi2}\\
    \langle \mathscr{A}\widehat{\bQ}^{n}\cdot\bn,\mu_1\rangle_{\mathcal{E}_h/\Gamma_\partial}=0, \label{exi3}\\
\langle \mathscr{A}\widehat{U}^{n},\mu_2\rangle_{\Gamma_\partial}=0,\label{exi4}
\end{align}
\end{subequations}
and
\begin{equation}
    \mathscr{A}\widehat{\bQ}^{n}\cdot\bn=\mathscr{A}\bQ^{n}\cdot\bn- \tau (\mathscr{A}U^n-\mathscr{A}\widehat{U}^n)\;\text{ on } \mathcal{E}_h,\label{exi5}
\end{equation}
With $W_1 = \mathscr{A}U^{n},~\bW = \mathscr{A}{\bQ}^{n}$ and $\widehat{W}_1 = \mathscr{A}\widehat{U}^{n},$
we need to discuss first the existence of solution triplet $\mathscr{W} =(W_1, \bW, \widehat{W}_1)\in W_h\times \bV_h\times M_h$ using Lemma~\ref{BFT}. Now a use of the flux equation \eqref{exi5} with \eqref{exi3} and \eqref{exi4} shows with $x=\mathscr{W}$ and $\big(\mathscr{H}(x),x\big)$ as
\begin{equation}
 \begin{split}
 \big(\mathscr{H}(\mathscr{W}), \mathscr{W}\big) :&=
    (\dfrac{2W_1-2U^n}{\Delta t^2},W_1)+(\bW,\bW)+(\bW,\nabla W_1)-\langle\bW\cdot\bn,W_1\rangle+\langle \tau(W_1-\widehat{W}_1),W_1\rangle\nonumber\\&+\big(\mathscr{F}(2W_1-U^{n-1},U^{n-1}),W_1\big)+(W_1,\nabla\cdot\bW)-\langle\widehat{W}_1,\bW\cdot\bn\rangle,
 \end{split}
 \end{equation}
and hence , using integration by parts, we obtain in a standard way
\begin{equation}
 \begin{split}
 \big(\mathscr{H}(\mathscr{W}), \mathscr{W}\big) :
  &=\|W_1\|^2-(U^n,W_1) +\dfrac{(\Delta t)^2}{2}\big(\|\bW\|^2+\|(W_1-\widehat{W}_1)\|^2_+\big)\nonumber\\
  &- (U^n, W_1)+\dfrac{(\Delta t)^2}{2}\big(\mathscr{F}(2W_1-U^{n-1},U^{n-1}),W_1\big)-\dfrac{\Delta t^2}{2}\langle\widehat{W}_1,\bW\cdot\bn\rangle.
 \end{split}
 \end{equation}
Using the definition of $\mathscr{F}(U,V)$, the  non linear term can be rewritten as 
\[\big(\mathscr{F}(2W_1-U^{n-1},U^{n-1}),W_1\big)=\big(\dfrac{1}{4}(2W_1((2W_1-U^{n-1})^2+{U^{n-1}}^2-2)),W_1\big).\]
and hence, with 
\[\opnorm{\mathscr{W}}^2= \|W_1\|^2+\dfrac{(\Delta t)^2}{2}\Big(\|\bW\|^2+\|(W_1-\widehat{W}_1)\|^2_+\Big)\]
we now derive for $0<\Delta t<1$
\begin{eqnarray}
(\mathscr{H}(\mathscr{W}),\mathscr{W})&=&\opnorm{\mathscr{W}}^2-(U^n,W_1)-\dfrac{(\Delta t)^2}{2}\|W_1\|^2+\dfrac{(\Delta t)^2}{4}\big(W_1((2W_1-U^{n-1})^2+{U^{n-1}}^2),W_1\big),\nonumber\\
&\geqslant& \opnorm{\mathscr{W}}^2-\dfrac{(\Delta t)^2}{2}\|W_1\|^2-\|U^n\|\|W_1\|\nonumber\\
&\geqslant& \big((1-\dfrac{(\Delta t)^2}{2})\opnorm{\mathscr{W}}-\|U^n\|\big)\opnorm{\mathscr{W}}\nonumber\\
&\geqslant& \big(\dfrac{1}{2}\opnorm{\mathscr{W}}-\|U^n\|\big)\opnorm{\mathscr{W}}\nonumber
\end{eqnarray}
Therefore, the map $(\mathscr{H}(\mathscr{W}),\mathscr{W}) > 0$ for all $\opnorm{\mathscr{W}}=2\|U^n\|+1 = R>0$ and now  from Lemma~ \ref{BFT}, there exists $\mathscr{W^*}=(U^{n+1}, \bQ^{n+1}, \widehat{U}^{n+1})\in W_h\times \bV_h\times M_h$ with $\opnorm{(U^{n+1}, \bQ^{n+1}, \widehat{U}^{n+1})}<R$ such that $\mathscr{H}(\mathscr{W^*})=0$. This completes the existence of  solution to the discrete system \eqref{ereq2}.

\noindent For uniqueness of the discrete solutions,  let $(U_1^{n+1}, \bQ_1^{n+1}, \widehat{U}_1^{n+1}),~(U_2^{n+1}, \bQ_2^{n+1}, \widehat{U}_2^{n+1})\in W_h\times \bV_h\times M_h$ be two arbitrary solutions of \eqref{ereq2}. 
With
\begin{equation}
    W^{n+1}_{12}=U_1^{n+1}-U_2^{n+1},\;\;
    \bW^{n+1}=\bQ_1^{n+1}-\bQ_2^{n+1}\;\mbox{and}\;\;
    \widehat{W}^{n+1}_{12}=\widehat{U}_1^{n+1}-\widehat{U}_2^{n+1},
\end{equation}
taking difference appropriately, it follows that
\begin{subequations}\label{ereq2new}
\begin{align}
&(\partial_{t}^2 W^n_{12},w_h)+(\mathscr{A}{\bW}^{n},\nabla w_h)-\langle\mathscr{A}{\widehat {\bW}}^{n}\cdot\bn,w_h\rangle+\big(\mathscr{F}(U_1^{n+1},U_1^{n-1})-\mathscr{F}(U_2^{n+1},U_2^{n-1}),w_h\big)=0, \label{ep11n}\\
&(\bW^{n+1},\bv_h)+(W_{12}^{n+1},\nabla\cdot\bv_h)-\langle\widehat{W}^{n+1}_{12},\bv_h\cdot\bn\rangle=0, 
\label{ep12n}\\
&\langle \widehat{\bW}^{n+1}\cdot\bn,\mu_1\rangle_{\mathcal{E}_h/\Gamma_\partial}=0, \label{ep13n}\\
&\langle \widehat{W}_{12}^{n+1}\cdot\bn,\mu_2\rangle_{\Gamma_\partial}=0,\label{ep14n}
\end{align}
\end{subequations}
for all $(w_h,\bv_h,\mu_1, \mu_2)\in W_h\times\bV_h\times M_h\times M_h$, where, 
\begin{equation}
    \widehat{\bW}^{n+1}\cdot\bn=\bW^{n+1}\cdot\bn- \tau (W^{n+1}_{12}-\widehat{W}^{n+1}_{12}).\label{ep15n}
\end{equation}
Now, choose $w_h=\updelta_t W_{12}^n$ in \eqref{ep11n}.  In \eqref{ep12n}, taking difference between between $t=t_{n+1}$ and $t=t_{n-1}$ and dividing by $2\Delta t,$ set $\bv_h=\mathscr{A}\bW^n.$ Then, add the resulting equations to arrive at 
\begin{equation}\label{ep16n}
 \begin{split}
 \frac{1}{2} \overline{\partial}_t \|\partial_t W^{n}_{12}\|^2
 &+\updelta_t \Big(\|\bW^{n}\|^2
 + \|W^{n}_{12}-\widehat{W}^{n+1}_{12}\|_+^2\Big)\\
 &
 +\big(\mathscr{F}(U_1^{n+1},U_1^{n-1})-\mathscr{F}(U_2^{n+1},U_2^{n-1}),\updelta_t W^n_{12}\big)=0.
 \end{split}
 \end{equation}
 Multiplying by $2\Delta t,$  a use of the Cauchy-Schwarz with Young's inequality shows
  \begin{equation}\label{ep17n}
 \begin{split}
 \|\partial_t W^{n}_{12}\|^2-\|\partial_t W^{n-1}_{12}\|^2&+\frac{1}{2}\Big(\|\bW^{n+1}\|^2-\|\bW^{n-1}\|^2+\|W^{n+1}_{12}-\widehat{W}^{n+1}_{12}\|_+^2
 -\|W^{n-1}_{12}-\widehat{W}^{n-1}_{12}\|_+^2\Big) \\&\leqslant \Delta t\|\big(\mathscr{F}(U_1^{n+1},U_1^{n-1})-\mathscr{F}(U_2^{n+1},U_2^{n-1})\|^2+  \frac{\Delta t}{4}(\|\partial_t W^{n}_{12}\|^2+\|\partial_t W^{n-1}_{12}\|^2).
 \end{split}
 \end{equation}
For the nonlinear term on the right hand side of \eqref{ep16n},  we note   as a consequence of Theorem ~\ref{th5} the boundedness of solution in $L^6$ norm that
\begin{equation}\label{N-1}
 \begin{split}
\|\mathscr{F}(U_1^{n+1},U_1^{n-1})&-\mathscr{F}(U_2^{n+1},U_2^{n-1})\|^2\leqslant C\;\big((\|U_1^{n+1}\|^4_{L^6}
+\|U_1^{n-1}\|^4_{L^6}+\|U_2^{n+1}\|^4_{L^6}\\
&+\|U_2^{n-1}\|^4_{L^6}+2^4)(\|W^{n+1}_{12}\|^2_{L^6}+\|W^{n-1}_{12}\|^2_{L^6})\big),\\
&\leqslant C \big(\|{\bW}^{n+1}\|^2
+\|{\bW}^{n-1}\|^2 +\|W^{n+1}_{12}-\widehat{W}^{n+1}_{12}\|_+^2+\|W^{n-1}_{12}-\widehat{W}^{n-1}_{12}\|_+^2\big).
\end{split}
 \end{equation}
Substituting  \eqref{N-1} in \eqref{ep17n}, and summing up the resulting inequality  from $n=1$ to $m$, we arrive at
\begin{equation}
\begin{split}
 \|\partial_t W_{12}^{m}\|^2&+\frac{1}{2}\Big(\|\bW^{m+1}\|^2+\|W_{12}^{m+1}-\widehat{W}_{12}^{m+1}\|_+^2\Big) 
 \leqslant\|\partial_t W_{12}^{0}\|^2+\frac{1}{2}\big(\|\bW^{0}\|^2+\|W_{12}^{0}-\widehat{W}_{12}^{0}\|_+^2\\&+\|\bW^{1}\|^2+\|W_{12}^{1}-\widehat{W}_{12}^{1}\|_+^2\big)+\Delta t\sum_{n=0}^m\Big( C (\|{\bW}^{n+1}\|^2
+\|{\bW}^{n-1}\|^2 +\|W_{12}^{n+1}-\widehat{W}_{12}^{n+1}\|_+^2)\\&+\|W_{12}^{n-1}-\widehat{W}_{12}^{n-1}\|_+^2+\frac{1}{4}\big(\|\partial_t W_{12}^{n}\|^2+\|\partial_t W_{12}^{n-1}\|^2\big)\Big).
\end{split}
\end{equation}
 With $W^0_{12}=0$, $\bW^{0}=0$ and  $W_{12}^{0}-\widehat{W}_{12}^{0}=0,$ a use of the kickback argument with an
 application of the  discrete Gr\"{o}nwall's lemma yields 
\begin{eqnarray}\label{uniqueness-1}
 (1-\frac{\Delta t}{4})\|\partial_t W^{n}_{12}\|^2&+(\frac{1}{2}-C\Delta t)\big(\|\bW^{n+1}\|^2+\|W^{n+1}_{12}-\widehat{W}^{n+1}_{12}\|_+^2\big)  \leqslant C \Big(\|\partial_t W_{12}^{0}\|^2\nonumber\\
 &+\|\bW^{1}\|^2+\|W_{12}^{1}-\widehat{W}_{12}^{1}\|_+^2\Big).
\end{eqnarray}
Choose $\Delta t$ so that $2C\Delta t < 1$ and $0<\Delta t <1$, and to complete the remaining part, we need to obtain an estimate 
for $n=1.$  Proceed similarly to arrive at
\begin{subequations}
\begin{align}
&\frac{2}{(\Delta t)^2}(W_{12}^1,w_h)-(\frac{1}{2}\bW^{1},\nabla w_h)+\langle\frac{1}{2}\widehat{W}_{12}^{1}\cdot\bn,w_h\rangle+\big(\mathscr{F}(U_1^{1},U_1^{0})-\mathscr{F}(U_2^{1},U_2^{0}),w_h\big)=0, \label{W1}\\
&(\bW^{1},\bv_h)+(W_{12}^{1},\nabla\cdot\bv_h)-\langle\widehat{W}_{12}^{1},\bv_h\cdot\bn\rangle=0, \label{W2}\\
&\langle \widehat{\bW}^{1}\cdot\bn,\mu_1\rangle_{\mathcal{E}_h/\Gamma_\partial}=0, \label{W3}\\
&\langle \widehat{W}_{12}^{1},\mu_2\rangle_{\Gamma_\partial}=0,\label{W4}
\end{align}
\end{subequations}
for all $(w_h,\bv_h,\mu_1, \mu_2)\in W_h\times\bV_h\times M_h\times M_h$, where 
\begin{equation}
    \widehat{\bW}^{1}\cdot\bn={\bW}^{1}\cdot\bn- \tau (W_{12}^{1}-\widehat{W}_{12}^{1}).\label{W5}
\end{equation}
Now, in \eqref{W1}, choosing $w_h=W_{12}^1/(\Delta t)$ and in \eqref{W2} set $\bv_h=\bW^{1}/(\Delta t).$ Then, add these two resulting equations and then multiply by $\Delta t$  to find that 
\begin{equation}
 \begin{split}
2 \|W_{12}^1/(\Delta t)\|^2+\frac{1}{2}\Big(\|\bW^{1}\|^2+\|W_{12}^{1}-\widehat{W}_{12}^{1}\|_+^2\Big) =
-\Delta t \;\big(\mathscr{F}(U_1^{1},U_1^{0})-\mathscr{F}(U_2^{1},U_2^{0}),W_{12}^1/(\Delta t)\big).\label{un11}
 \end{split}
 \end{equation}
For the nonlinear term, using Lemma \ref{N-Linear}, then Lemma \ref{ll2} and Theorem \ref{th5} gives
 \begin{equation}
 \begin{split}
2 \|W_{12}^1/(\Delta t)\|^2+\frac{1}{2}\Big(\|\bW^{1}\|^2+\|W_{12}^{1}-\widehat{W}_{12}^{1}\|_+^2\Big) \leqslant C\;\Delta t\; \big(\|\bW^{1}\|^2+\|W_{12}^{1}-\widehat{W}_{12}^{1}\|_\tau^2\big)+\|W_{12}^1/(\Delta t)\|^2.\label{un21}
 \end{split}
 \end{equation}
 Hence, the use of a kickback argument shows
  \begin{equation}
 \begin{split}
\|W_{12}^1/(\Delta t)\|^2+(\frac{1}{2}-\Delta t \mathrm{C})\Big(\|\bW^{1}\|^2+\|W_{12}^{1}-\widehat{W}_{12}^{1}\|_+^2\Big)\leqslant 0.\label{un31}
 \end{split}
 \end{equation}
 By choosing $\Delta t$ small so that   $(\frac{1}{2}- C \Delta t) ) >0$, there holds  $U_1^1=U_2^1,~ \bQ_1^1=\bQ_2^1,~\widehat{U}_1^{1}=\widehat{U}_2^{1}$.
On substitution in \eqref{uniqueness-1},  this completes the rest of the proof.
\end{proof}
\subsection{ Error analysis}
Since the error estimates for the semidiscrete scheme are derived in Section 3, we  now  split the error in the completely discrete scheme  \eqref{ereq2} as
\begin{align*}
u(t_n)-U^n=u(t_n)-u_h(t_n) +\xi_{u}^{n}\;\;\mbox{and}\;\;
\bq(t_n)-\bQ^n=\bq(t_n)-\bq_h(t_n) +\xi_{\bq}^{n}.
\end{align*}
Now, it remains to estimate $\|\xi_{u}^{n}\|$ and $\|\xi_{\bq}^{n}\|.$ Since  $U^0 = u_{h}(0)$ and $\bQ^0 = \bq_h(0)$ are the solutions of problem \ref{HDG0}, it follows that $\xi_{u}(0)=0, ~\xi_{\bq}(0)=0.$

With the help of \eqref{ereq} and \eqref{ereq2}, we  now obtain
\begin{subequations}
\begin{align}
\dfrac{2}{\Delta t}(\partial^{*}_{t}\xi_{u}^{1/2},w_h)+({\xi}_{\bq}^{1/2},\nabla w_h)-\langle{\widehat{\xi}}_{\bq}^{1/2}\cdot\bn,w_h\rangle+\big(f(u_{h}(t_{1/2}))-\mathscr{F}(U^{1},U^{0}),w_h\big)\nonumber\\+\dfrac{2}{\Delta}(u_1,w_h)=(\dfrac{2}{\Delta}\partial_{t}^* u_{h}(t_{1/2})-{u}_{htt}(t_{1/2}),w_h),\label{ep20}\\
(\xi_{\bq}^{1/2},\bv_h)+(\xi_{u}^{1/2},\nabla\cdot\bv_h)-\langle\widehat{\xi}_{u}^{1/2},\bv_h\cdot\bn\rangle=0,\label{ep10}\\
(\partial_{t}^2\xi_{u}^{n},w_h)+(\mathscr{A}{\xi}_{\bq}^{n},\nabla w_h)-\langle\mathscr{A}{\widehat{\xi}}_{\bq}^{n}\cdot\bn,w_h\rangle+\big(\mathscr{A}f(u_{h}(t_{n}))-\mathscr{F}(U^{n+1},U^{n-1}),w_h\big)\nonumber\\=(\partial_{t}^2 u_{h}(t_{n})-\mathscr{A}{u}_{htt}(t_{n}),w_h),\label{ep21}\\
(\xi_{\bq}^{n},\bv_h)+(\xi_{u}^{n},\nabla\cdot\bv_h)-\langle\widehat{\xi}_{u}^{n},\bv_h\cdot\bn\rangle=0,\label{ep22}
\end{align}
\end{subequations}
Below, we discuss the main theorem on error analysis of the fully discrete scheme. 
\begin{theorem}\label{th5.2}
Let  $\{U^{n},Q^{n},\widehat{U}^{n}\},~n\geqslant 0$ be defined by \eqref{ep17}-\eqref{ep15}. Then, there is a positive constant $C$ independent of $h$ and $k$ such that  for all $ J\geqslant 1$
\begin{align}
       \max_{0\leqslant n \leqslant J}\big( \|\partial_t(u(t^{n+1})-U^{n+1})\|+\|\bq(t^{n+1})-\bQ^{n+1}\|\big)\leqslant C\;\big(h^{k+1}+(\Delta t)^2\big),\label{s1}\\
         \max_{0\leqslant n \leqslant J} \|u(t^{n+1})-U^{n+1}\|\leqslant C\;(h^{k+1}+(\Delta t)^2).\label{s2}
    \end{align}
\end{theorem}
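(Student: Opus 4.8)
The plan is to run a discrete energy argument that mirrors the proofs of Theorem~\ref{th5} and Theorem~\ref{EU-DS}, using the semidiscrete estimates already in hand to reduce the problem to the purely temporal error. Writing $u(t_n)-U^n=(u-u_h)(t_n)+\xi_u^n$ and $\bq(t_n)-\bQ^n=(\bq-\bq_h)(t_n)+\xi_\bq^n$, the quantities $(u-u_h)(t_n)$, $(u_t-u_{ht})(t_n)$ and $(\bq-\bq_h)(t_n)$ are already $O(h^{k+1})$ by Theorems~\ref{tt2} and~\ref{tt3}, so the task reduces to bounding $\xi_u^n,\xi_\bq^n$ governed by the error system \eqref{ep20}--\eqref{ep22}. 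Since $U^0=u_h(0)$ and $\bQ^0=\bq_h(0)$ solve the same elliptic problem \eqref{HDG0}, one has $\xi_u^0=0$ and $\xi_\bq^0=0$, which pins down the starting data.

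For the generic step I would take the central difference of \eqref{ep22} between levels $t_{n+1}$ and $t_{n-1}$ and divide by $2\Delta t$ to obtain an identity for $\updelta_t\xi_\bq^n$; then choose $\bv_h=\mathscr{A}\xi_\bq^n$ there and $w_h=\updelta_t\xi_u^n$ in \eqref{ep21}, and add. Using the trace relations \eqref{ep14}, \eqref{ep15} together with the flux definition \eqref{ep13}, exactly as in Theorem~\ref{th5}, the linear terms telescope into the discrete energy increment $\tfrac{1}{2}\overline{\partial}_t\|\partial_t\xi_u^{n}\|^2+\updelta_t\big(\tfrac12\|\xi_\bq^{n}\|^2+\tfrac12\|\xi_u^{n}-\widehat{\xi}_u^{n}\|_+^2\big)$, leaving the nonlinear term and the consistency term on the right.

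The key step is the nonlinear contribution $\mathscr{A}f(u_h(t_n))-\mathscr{F}(U^{n+1},U^{n-1})$, which I split as $\big(\mathscr{A}f(u_h(t_n))-\mathscr{F}(u_h(t_{n+1}),u_h(t_{n-1}))\big)+\big(\mathscr{F}(u_h(t_{n+1}),u_h(t_{n-1}))-\mathscr{F}(U^{n+1},U^{n-1})\big)$. The first bracket is a Taylor-consistency error of size $O((\Delta t)^2)$, bounded using time-derivative bounds on $u_h$ inherited from the regularity of $u$ via the semidiscrete equations. The second bracket is controlled by Lemma~\ref{N-Linear} in terms of $\|\xi_u^{n+1}\|_{L^6}+\|\xi_u^{n-1}\|_{L^6}$, where the remaining $L^6$ factors are uniformly bounded thanks to Corollary~\ref{coro} for $u_h$ and Theorem~\ref{th5} for $U^n$; an application of Lemma~\ref{ll2} then converts these $L^6$ norms into $\|\xi_\bq^{n\pm1}\|+\|\xi_u^{n\pm1}-\widehat{\xi}_u^{n\pm1}\|_+$, which are energy quantities. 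The remaining right-hand term $\partial_t^2u_h(t_n)-\mathscr{A}u_{htt}(t_n)$ is a standard centred second-order difference truncation, again $O((\Delta t)^2)$.

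Finally I would multiply by $2\Delta t$, sum from $n=1$ to $m$ so the energy part telescopes, and treat the first step \eqref{ep20}--\eqref{ep10} separately: choosing test functions proportional to $\xi_u^1/\Delta t$ and $\xi_\bq^1$ as in \eqref{un11}, and using $u_{ht}(0)=\pw u_1$ with the $O((\Delta t)^2)$ consistency of the half-step initialization, yields bounds on $\|\partial_t\xi_u^0\|$, $\|\xi_\bq^1\|$ and $\|\xi_u^1-\widehat{\xi}_u^1\|_+$ of order $(\Delta t)^2$. After a kickback absorption of the $\Delta t$-weighted energy terms (valid for $\Delta t$ small) and the discrete Gr\"onwall lemma, one obtains $\max_n\big(\|\partial_t\xi_u^n\|+\|\xi_\bq^{n+1}\|+\|\xi_u^{n+1}-\widehat{\xi}_u^{n+1}\|_+\big)\le C(\Delta t)^2$; combined with the $O(h^{k+1})$ semidiscrete bounds this gives \eqref{s1}. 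For \eqref{s2} I would write $\xi_u^{n+1}=\Delta t\sum_{j=0}^n\partial_t\xi_u^j$ (using $\xi_u^0=0$) and invoke $N\Delta t=1$, then add the $O(h^{k+1})$ estimate of $\|(u-u_h)(t_{n+1})\|$ from Theorem~\ref{tt3}. The main obstacle is keeping the nonlinear term and the truncation error simultaneously in telescoping/energy form so that the kickback and discrete Gr\"onwall arguments close with only a smallness restriction on $\Delta t$, while correctly accounting for the special first step.
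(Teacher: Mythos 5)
Your proposal is correct and follows essentially the same route as the paper: the same splitting $u(t_n)-U^n=(u-u_h)(t_n)+\xi_u^n$ with $\xi_u^0=\xi_\bq^0=0$, the same test functions $w_h=\updelta_t\xi_u^n$, $\bv_h=\mathscr{A}\xi_\bq^n$, the same $\mathcal{I}_1+\mathcal{I}_2$ splitting of the nonlinearity handled via Taylor expansion, Lemma~\ref{N-Linear}, Theorem~\ref{th5} and Lemma~\ref{ll2}, the same separate treatment of the first step, and the same kickback plus discrete Gr\"onwall closure. The only (harmless) deviation is in concluding \eqref{s2}: you telescope $\xi_u^{n+1}=\Delta t\sum_{j\leqslant n}\partial_t\xi_u^j$, whereas the paper applies the $p=2$ case of Lemma~\ref{ll2} to the flux error equation to bound $\|\xi_u^{m+1}\|$ directly by $\|\xi_\bq^{m+1}\|+\|\xi_u^{m+1}-\widehat{\xi}_u^{m+1}\|_+$; both yield the stated estimate.
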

\begin{proof}
Choose $w_h=\updelta_{t}\xi_{u}^{n}$ in \eqref{ep21} and in \eqref{ep22} taking difference between two levels, i.e., between $t=t_{n+1}$ and $t=t_{n-1}$ and then dividing by $2\Delta t$, then choose $\bv_h = \mathscr{A}{\xi}_{\bq}^{n}$, we add the resulting equations. 
\begin{align}\label{eq01}
  \frac{1}{2} \overline{\partial}_t\|\partial_{t}\xi_{u}^{n}\|^2 &+\updelta_t \big(\|\xi_{\bq}^{n}\|^2
  +\|\xi_{u}^{n}-\widehat{\xi}_{u}^{n}\|_+^2\big)
  = (\partial_{t}^2 u_{h}(t_{n})-\mathscr{A}u_{htt}(t_{n}),\updelta_{t}\xi_{u}^{n})\nonumber\\
  &-\big(\mathscr{A}f(u_{h}(t_{n}))-\mathscr{F}(U^{n+1},U^{n-1}),\updelta_{t}\xi_{u}^{n}\big).  
  \end{align}
side of \eqref{eq01} 
We can easily deal with the first term on the right-hand side in \eqref{eq01} using Taylor's series expansion 
\begin{align*}
  \partial_{t}^2 u_{h}(t_{n})-\mathscr{A}u_{htt}(t_{n})&=\partial_{t}^2 (  u_{h}(t_{n})-u(t_{n}) )+\partial_{t}^2 u(t_{n})-\mathscr{A}u_{htt}(t_{n}),\\
  &=-\big(\partial_{t}^2 ( u(t_n)-u_{h}(t_{n}) \big)+\big(\mathscr{A}u_{tt}(t_{n})-\mathscr{A}u_{htt}(t_{n})\big)+\dfrac{1}{(\Delta t)^2}R^{*}_1 -\dfrac{1}{2}R^{*}_2, 
 \end{align*}
 where,
 \begin{align*}
 R^{*}_1=\dfrac{1}{6}\int_{-\Delta t}^{\Delta t}(\Delta t - |\mu|)^3 u_{tttt}(t_n+\mu)\;d\mu,\;\;
R^{*}_2=\int_{-\Delta t}^{\Delta t}(\Delta t - |\mu|) u_{tttt}(t_n+\mu)\;d\mu.
 \end{align*}
 A use of  Theorem \ref{tt2} with Lemma \ref{ll3} and following the argument given in \cite{pani2001mixed} to estimate $R^{*}_1$ and $R^{*}_2$ shows
  \begin{eqnarray} \label{truncation}
   \|\partial_{t}^2 u_{h}(t_{n})-\mathscr{A}u_{htt}(t_{n})\|^2 &\leqslant & C \;h^{2(k+1)} \|u_{tt}\|^2_{L^{\infty}(H^{k+1})}+ \dfrac{1}{(\Delta t)^4}\|R^{*}_1\|^2 +\dfrac{1}{4}\|R^{*}_2\| \nonumber\\
   &\leqslant & C \;\big( h^{2(k+1)} \|u_{tt}\|^2_{L^{\infty}(H^{k+1})}+  (\Delta t)^3 \int_{t_{n-1}}^{t_{n+1}} \|u_{tttt}(s)\|^2\;ds\Big).
  \end{eqnarray}
Therefore, we need to estimate the nonlinear term and 
now, we rewrite it as
\begin{align}\label{ff}
 \mathscr{A}f(u_{h}(t_{n}))-\mathscr{F}(U^{n+1},U^{n-1})&=\mathscr{A}F^\prime\big(u_{h}(t_{n})\big)-\mathscr{F}(U^{n+1},U^{n-1}),\nonumber\\
& = \mathcal{I}_1+\mathcal{I}_2,
\end{align}
where
\begin{align*}
     \mathcal{I}_1= \mathscr{A}F^\prime\big(u_{h}(t_{n})\big)-\mathscr{F}\big(u_h(t_{n+1}),u_h(t_{n-1})\big),\;\;
\mathcal{I}_2=\mathscr{F}\big(u_h(t_{n+1}),u_h(t_{n-1}))\big)-\mathscr{F}(U^{n+1},U^{n-1}). 
\end{align*}
Again, an application of  the Taylor series expansion shows
\begin{align*}
   \mathcal{I}_1& = \dfrac{1}{2}(R_1+R_2)-\dfrac{1}{u_h(t_{n+1})-u_h(t_{n-1})}(R_1-R_2),
   \end{align*}
where, the remainder $R_1$ and $R_2$ are given by
\begin{align*}
    R_1 &= (u_h(t_{n+1})-u_h(t_{n}))^2\int_0^{1} F^{''}(u_h(t_{n})+\lambda(u_h(t_{n+1})-u_h(t_{n})))(1-\lambda)d\lambda,\\
    R_2 &= (u_h(t_{n-1})-u_h(t_{n}))^2\int_0^{1} F^{''}(u_h(t_{n})+\lambda(u_h^{n-1}-u_h(t_{n})))(1-\lambda)d\lambda.
\end{align*}
After substituting  the value of $F^{''}$ and integrating in $\mathcal{I}_1$, we use Theorem \ref{tt3} to derive
\begin{align*}
 \|\mathcal{I}_1\| &\leqslant C\;\big( h^{k+1}+(\Delta t)^2)\big).
\end{align*}
For the estimate of  $\mathcal{I}_2,$ a use of Lemma \ref{N-Linear} with   Lemma \ref{ll2} and Theorem \ref{th5} yields
\begin{align*}
    \|\mathcal{I}_2\|^2 &\leqslant C\;(\|\xi_{u}^{n-1}\|^2_{L^6}+\|\xi_{u}^{n+1}\|^2_{L^6})\big(\|u_h(t_{n+1})\|^4_{L^6}+\|u_h(t_{n-1})\|^4_{L^6}+\|U^{n+1}\|^4_{L^6}+\|U^{n-1}\|^4_{L^6}+1\big) \nonumber\\
 &\leqslant C\; \big(\|\xi_{\bq}^{n+1}\|^2
+\|\xi_{\bq}^{n-1}\|^2 +\|\xi_{u}^{n+1}-\widehat{\xi}_{u}^{n+1}\|_+^2+\|\xi_{u}^{n-1}-\widehat{\xi}_{u}^{n-1}\|_+^2\big).  
    \end{align*}
Substitute the estimates of $ \mathcal{I}_1$ and $\mathcal{I}_2$ in \eqref{ff} to arrive at
\begin{align}
     \| \mathscr{A}f(u_{h}(t_{n}))-\mathscr{F}(U^{n+1},U^{n-1})\|^2&\leqslant C\;\Big((h^{k+1}+(\Delta t)^2)^2+\|\xi_{\bq}^{n+1}\|^2
+\|\xi_{\bq}^{n-1}\|^2 \nonumber\\&+\|\xi_{u}^{n+1}-\widehat{\xi}_{u}^{n+1}\|_+^2+\|\xi_{u}^{n-1}-\widehat{\xi}_{u}^{n-1}\|_+^2\Big).\label{sd}
\end{align}
Substitute  \eqref{truncation} and \eqref{sd} in \eqref{eq01},  multiply by $2 \Delta t$ and sum up from $n =1$ to $m$ with $U^0 =  u_{h}(0),~ Q^0=\bq_h(0)$ and $\widehat{U}^0 = \widehat{u}_h(0)$, to obtain
\begin{align}
 \|\partial_{t}\xi_{u}^{m}\|^2&+\dfrac{1}{2}\big(\|\xi_{\bq}^{m+1}\|^2 +\|\xi_{u}^{m+1}-\widehat{\xi}_{u}^{m+1}\|_+^2\big) \leqslant \dfrac{1}{2}\big(\|\xi_{\bq}^{1}\|^2 +\|\xi_{u}^{1}-\widehat{\xi}_{u}^{1}\|_+^2 \big) + C\; \big( h^{2(k+1)} + (\Delta t)^4\big)\nonumber\\
 &+\frac{1}{4}(\|\partial_{t}\xi_{u}^{n}\|^2+\|\partial_{t}\xi_{u}^{n-1}\|^2)\big)+C\;\Big(\Delta t\sum_{n=1}^m\big(\|\xi_{\bq}^{n+1}\|^2+\|\xi_{\bq}^{n-1}\|^2 \nonumber\\&+\|\xi_{u}^{n+1}-\widehat{\xi}_{u}^{n+1}\|_+^2
 +\|\xi_{u}^{n-1}-\widehat{\xi}_{u}^{n-1}\|_+^2 \big)\Big). 
\end{align}
Using the kickback argument, we arrive at
\begin{align}
(1-\frac{\Delta t}{4})\|\partial_{t}\xi_{u}^{m}\|^2&+(\frac{1}{2}-C\Delta t)\big(\|\xi_{\bq}^{m+1}\|^2 +\|\xi_{u}^{m+1}-\widehat{\xi}_{u}^{m+1}\|_+^2\big) \leqslant \dfrac{1}{2}\big(\|\xi_{\bq}^{1}\|^2 +\|\xi_{u}^{1}-\widehat{\xi}_{u}^{1}\|_+^2 \big)\nonumber\\
&+ C\; \big( h^{2(k+1)} + (\Delta t)^4\big)+C\;\Delta t\sum_{n=1}^m\big(\|\partial_{t}\xi_{u}^{n-1}\|^2 +
\|\xi_{\bq}^{n}\|^2
+\|\xi_{u}^{n}-\widehat{\xi}_{u}^{n}\|_+^2
\big). \label{eq02}
\end{align}
Now, with the help of \eqref{ep20} and \eqref{ep10}, we plan to  bound  $\|\xi_{\bq}^{1}\|^2 +\|\xi_{u}^{1}-\widehat{\xi}_{u}^{1}\|_+^2.$
Choose $w_h=\partial^{*}_{t}\xi_{u}^{1/2}$ in  \eqref{ep20} and $\bv_h={\xi}_{\bq}^{1/2}$ in \eqref{ep10}, after adding resulting equation, we obtain
\begin{align}
   \dfrac{2}{\Delta t}\|\partial^{*}_{t}\xi_{u}^{1/2}\|^2&+\dfrac{1}{2\Delta t}(\|{\xi}_{\bq}^{1}\|^2-\|{\xi}_{\bq}^{0}\|^2+\|\xi_{u}^1-\widehat{\xi}_{u}^1\|_+^2-\|\xi_{u}^0-\widehat{\xi}_{u}^0\|_+^2)\nonumber \\&=(\dfrac{2}{\Delta t}(\partial_{t}^* u_{h}(t_{1/2})- U^0)-{u}_{htt}(t_{1/2}),\partial^{*}_{t}\xi_{u}^{1/2})-(f(u_{h}(t_{1/2}))-\mathscr{F}(U^{1},U^{0}),\partial^{*}_{t}\xi_{u}^{1/2}\big). \label{eq03}
\end{align}
An application of  the Taylor's series expansion in \eqref{eq03} shows
\begin{align}
 \|\xi_{u}^{1}\|^2+ \|\xi_{\bq}^{1}\|^2+\|\xi_{u}^1-\widehat{\xi}_{u}^1\|_+^2\leqslant C\;(h^{2k+2} +  (\Delta t)^4).\label{eq04} 
\end{align}
 On substitution \eqref{eq04} in \eqref{eq02} and choosing $\Delta t >0$ small sothat $(2C\Delta t-1 )>0,$ an application of 
 Gr\"{o}nwall's lemma yields
 \begin{align}\label{estimate-q}
 \max_{0\leqslant m \leqslant J} \big(&\|\partial_{t}\xi_{u}^{m}\|^2+\|\xi_{\bq}^{m+1}\|^2 +\|\xi_{u}^{m+1}-\widehat{\xi}_{u}^{m+1}\|_+^2\big)\leqslant C\;(h^{2k+2} +  (\Delta t)^4).
\end{align}
 For the estimate \eqref{s2},  note using Lemma \ref{ll2} that 
\begin{eqnarray} \label{Lp-estimate}
\|\xi_{u}^{m+1}\|_{L^{p}} \leqslant C \Big( \|\xi_{\bq}^{m+1}\| + \|\xi_{u}^{m+1}-\widehat{\xi}_{u}^{m+1}\|_+ \Big).
\end{eqnarray}
A use of \eqref{estimate-q} in  \eqref{Lp-estimate} for $p=2$  with  Lemma \ref{tt3} and triangle inequality completes the rest of the proof. 
 \end{proof}
 \subsection{A Fully Discrete Non-Conservative Scheme}
This subsection is focused on the fully discrete, non-conservative HDG scheme. We approximate the nonlinear term in system \eqref{ereq2} by replacing $\mathscr{F}(U^{n+1},U^{n-1})$ simply by $f(U^n)$. The advantage of this scheme is that at each time step, a linear system of algebraic equations has to be solved.

The HDG  fully discretized formulation reads as: for each $n\geqslant 0$, we seek $(U^n,\bQ^n,\widehat{U}^n)\in W_h\times \bV_h\times M_h$ such that 
\begin{subequations}\label{ereqq2}
\begin{align}
&\frac{2}{\Delta t}(\partial^{*}_{t}U^{1/2},w_h)+(\bQ^{1/2},\nabla w_h)-\langle\widehat {\bQ}^{1/2}\cdot\bn,w_h\rangle+\big({f}(U^{0}),w_h\big)-\frac{2}{\Delta t}(u_1,w_h)=0\label{epp17}\\
&(\partial_{t}^2U^n,w_h)+(\mathscr{A}{\bQ}^{n},\nabla w_h)-\langle\mathscr{A}{\widehat {\bQ}}^{n}\cdot\bn,w_h\rangle+\big({f}(U^{n}),w_h\big)=0\quad\forall~~  n\geqslant 1, \label{epp11}\\
&(\bQ^{n},\bv_h)+(U^{n},\nabla\cdot\bv_h)-\langle\widehat{U}^{n},\bv\cdot\bn\rangle=0\quad\forall~~ n\geqslant 0, \label{epp13}\\
&\langle \widehat{\bQ}^{n}\cdot\bn,\mu_1\rangle_{\mathcal{E}_h/\Gamma_\partial}=0\quad\forall~~ n\geqslant 0, \label{epp14}\\
&\langle \widehat{U}^{n},\mu_2\rangle_{\Gamma_\partial}=0\quad\forall~~ n\geqslant 0,\label{epp15}
\end{align}
\end{subequations}
for all $(w_h,\bv_h,\mu_1, \mu_2)\in W_h\times\bV_h\times M_h\times M_h$, where, 
\begin{equation}
    \widehat{\bQ}^{n}\cdot\bn=\bQ^{n}\cdot\bn- \tau (U^n-\widehat{U}^n)\;\text{ on } \mathcal{E}_h,
\end{equation}
\begin{theorem}
Assume that the solution triplet $\{U^{n},Q^{n},\widehat{U}^{n}\},~n\geqslant 0$ is defined by the system \eqref{ereqq2}. Then there is a constant $C$ independent of $h$ and $k$ such that for all $ J\geqslant 1,$
\begin{align}
       \max_{0\leqslant n \leqslant J}\big(\|\partial_t(u(t^{n+1})-U^{n+1})\|+\|\bq(t^{n+1})-\bQ^{n+1}\|\big)\leqslant C\;(h^{k+1}+(\Delta t)^2),\label{s3}\\
      \max_{0\leqslant n \leqslant J}\big(\|u(t^{n+1})-U^{n+1}\|\big)\leqslant C\;(h^{k+1}+(\Delta t)^2).\label{s4}
    \end{align}
\end{theorem}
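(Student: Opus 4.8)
The plan is to reproduce verbatim the machinery of the conservative error analysis in Theorem~\ref{th5.2}, since the non-conservative system \eqref{ereqq2} differs from \eqref{ereq2} only by carrying $f(U^n)$ in place of $\mathscr{F}(U^{n+1},U^{n-1})$. I would first split $u(t_n)-U^n=\big(u(t_n)-u_h(t_n)\big)+\xi_u^n$ and $\bq(t_n)-\bQ^n=\big(\bq(t_n)-\bq_h(t_n)\big)+\xi_\bq^n$, with $\xi_u^n=u_h(t_n)-U^n$, $\xi_\bq^n=\bq_h(t_n)-\bQ^n$ and $\widehat{\xi}_u^n=\widehat{u}_h(t_n)-\widehat{U}^n$, so that the semidiscrete contributions are already controlled by Theorems~\ref{tt2} and \ref{tt3} and only the fully discrete defects $\xi$ remain. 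Subtracting \eqref{epp11}--\eqref{epp15} from the $\mathscr{A}$-average of the semidiscrete system \eqref{hdg1}--\eqref{hdg5} taken at $t_{n+1}$ and $t_{n-1}$ yields error equations with exactly the structure of \eqref{ep21}--\eqref{ep22}: the same truncation term $\partial_t^2 u_h(t_n)-\mathscr{A}u_{htt}(t_n)$ on the right, but now the nonlinear defect $\mathscr{A}f(u_h(t_n))-f(U^n)$ in place of $\mathscr{A}f(u_h(t_n))-\mathscr{F}(U^{n+1},U^{n-1})$.

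Choosing $w_h=\updelta_t\xi_u^n$ in the displacement equation and $\bv_h=\mathscr{A}\xi_\bq^n$ in the centred difference of the flux equation, and adding, I obtain the discrete energy identity mirroring \eqref{eq01},
\[
\tfrac12\overline{\partial}_t\|\partial_t\xi_u^n\|^2+\updelta_t\big(\|\xi_\bq^n\|^2+\|\xi_u^n-\widehat{\xi}_u^n\|_+^2\big)=\big(\partial_t^2u_h(t_n)-\mathscr{A}u_{htt}(t_n),\updelta_t\xi_u^n\big)-\big(\mathscr{A}f(u_h(t_n))-f(U^n),\updelta_t\xi_u^n\big).
\]
The truncation term is bounded precisely as in \eqref{truncation} by $C(h^{k+1}+(\Delta t)^2)$. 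The genuinely new piece is the nonlinear defect, which I split as $\mathscr{A}f(u_h(t_n))-f(U^n)=\big[\mathscr{A}f(u_h(t_n))-f(u_h(t_n))\big]+\big[f(u_h(t_n))-f(U^n)\big]$. The first bracket is a pure time-consistency term; a Taylor expansion of $t\mapsto f(u_h(t))=u_h^3-u_h$, together with the uniform-in-time $L^6$ bounds on $u_h,u_{ht},u_{htt}$ inherited from Corollary~\ref{coro} and Lemmas~\ref{ll3}--\ref{lem34} via Lemma~\ref{ll2} (possibly at the cost of mild extra time regularity of $u$), gives $\|\mathscr{A}f(u_h(t_n))-f(u_h(t_n))\|\le C(\Delta t)^2$. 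The second bracket is handled by the mean value theorem, $f(u_h(t_n))-f(U^n)=(3\zeta^2-1)\xi_u^n$ with $\zeta$ between $u_h(t_n)$ and $U^n$, so that $\|f(u_h(t_n))-f(U^n)\|\le C\big(1+\|u_h(t_n)\|_{L^6}^2+\|U^n\|_{L^6}^2\big)\|\xi_u^n\|_{L^6}$, and $\|\xi_u^n\|_{L^6}$ is reduced to $\|\xi_\bq^n\|+\|\xi_u^n-\widehat{\xi}_u^n\|_+$ through Lemma~\ref{ll2} applied to the flux error equation.

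The main obstacle is exactly the factor $\|U^n\|_{L^6}$ in this last estimate. Unlike the conservative scheme, the system \eqref{ereqq2} admits no discrete energy law, so Theorem~\ref{th5} is unavailable and there is no a priori control of $\|U^n\|_{L^6}$. I would close this gap by induction on the time level: assuming the asserted bound holds up to step $n$, Lemma~\ref{ll2} converts it into $\|\xi_u^m\|_{L^6}\le C(h^{k+1}+(\Delta t)^2)$ for $m\le n$, whence $\|U^m\|_{L^6}\le\|u_h(t_m)\|_{L^6}+\|\xi_u^m\|_{L^6}\le C$ is uniformly bounded for $h,\Delta t$ sufficiently small. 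This fixes the constant in the nonlinear estimate and reduces the Lipschitz bracket to $\|f(u_h(t_n))-f(U^n)\|^2\le C\big(\|\xi_\bq^n\|^2+\|\xi_u^n-\widehat{\xi}_u^n\|_+^2\big)$, exactly as for the term $\mathcal{I}_2$ in Theorem~\ref{th5.2}.

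Finally, summing the energy identity from $n=1$ to $m$, absorbing the $\|\partial_t\xi_u^n\|^2$ and the energy contributions by a kickback argument and invoking the discrete Gr\"onwall inequality, together with the first-step estimate obtained from \eqref{epp17} and \eqref{epp13} exactly as in \eqref{eq04}, yields
\[
\max_{0\le m\le J}\big(\|\partial_t\xi_u^m\|^2+\|\xi_\bq^{m+1}\|^2+\|\xi_u^{m+1}-\widehat{\xi}_u^{m+1}\|_+^2\big)\le C\big(h^{2k+2}+(\Delta t)^4\big),
\]
which both closes the induction and establishes \eqref{s3}. The displacement bound \eqref{s4} then follows from the $p=2$ case of Lemma~\ref{ll2} applied to the flux error equation, combined with Theorem~\ref{tt3} and the triangle inequality, just as at the end of Theorem~\ref{th5.2}.
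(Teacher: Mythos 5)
Your proposal is correct and follows essentially the same route as the paper: the same splitting of the error into a semidiscrete part plus $\xi_u^n,\xi_{\bq}^n$, the same energy identity (the paper's \eqref{eq011}), the same truncation bound \eqref{truncation}, the same two-part treatment of the nonlinear defect (a Taylor/consistency term of size $(\Delta t)^2$ plus a Lipschitz term controlled through Lemma~\ref{N-Linear} and Lemma~\ref{ll2}), and the same kickback--Gr\"onwall conclusion with the first-step estimate handled as in the conservative case. The one place where you genuinely add something is the control of $\|U^n\|_{L^6}$: you correctly observe that the non-conservative scheme \eqref{ereqq2} has no discrete energy law, so the bound of Theorem~\ref{th5} is unavailable, and you close this gap by an induction on the time level that bootstraps the error estimate into a uniform $L^6$ bound on $U^n$ for $h,\Delta t$ small. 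The paper instead appeals to ``$L^6$ estimates of $u_h(t_n),U^n$ \ldots with an application of energy arguments,'' which strictly speaking is only justified for the conservative scheme; your bootstrap supplies the missing justification (with the usual caveat that the Gr\"onwall constant must be fixed independently of the induction level before the argument is run). In short: same proof skeleton, but your version is more careful on the a priori boundedness of the discrete solution.
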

\begin{proof}
    Similar to what is done in Theorem \ref{th5.2}, we arrive at 
\begin{equation}
    \begin{split}
  \frac{1}{2\Delta t}\Big(\|\partial_{t}\xi_{u}^{n}\|^2-\|\partial_{t}\xi_{u}^{n-1}\|^2+\frac{1}{2}\big(\|\xi_{\bq}^{n+1}\|^2&-\|\xi_{\bq}^{n-1}\|^2 +\|\xi_{u}^{n+1}-\widehat{\xi}_{u}^{n+1}\|_+^2-\|\xi_{u}^{n-1}-\widehat{\xi}_{u}^{n-1}\|_+^2\big)\Big) \\&= (\partial_{t}^2 u_{h}(t_{n})-\mathscr{A}u_{htt}(t_{n}),\updelta_{t}\xi_{u}^{n})-\big(\mathscr{A}f(u_{h}(t_{n}))-f(U^{n}),\updelta_{t}\xi_{u}^{n}\big)\label{eq011}  
  \end{split}
\end{equation}\
Only the nonlinear term on the right-hand side of the equation \eqref{eq011} needs to be handled. Therefore, by applying Taylor's expansion, we arrive at
 \begin{eqnarray*}
 |\big(\mathscr{A}f(u_{h}(t_{n}))-f(U^{n}),\updelta_{t}\xi_{u}^{n}\big)|&\leqslant&\|f(u_{h}(t_{n}))-f(U^{n})+\dfrac{1}{2}(r_1+r_2)\|\|\updelta_{t}\xi_{u}^{n}\|,
\end{eqnarray*}
where, $$r_1 = (u_h(t_{n+1})-u_h(t_{n}))^2\int_0^{1} f^{'}(u_h(t_{n})+\lambda(u_h(t_{n+1})-u_h(t_{n})))(1-\lambda)d\lambda,$$
$$r_2 = (u_h(t_{n-1})-u_h(t_{n}))^2\int_0^{1} f^{'}(u_h(t_{n})+\lambda(u_h(t_{n-1})-u_h(t_{n})))(1-\lambda)d\lambda.$$
Note from Lemma \ref{N-Linear} that 
 \begin{eqnarray*}
 \|f(u_{h}(t_{n}))-f(U^{n})\|^2&\leqslant& C \|\xi_{u}^{n}\|^2_{L^6}(\|u_{h}(t_{n})\|^4_{L^6}+\|U^{n}\|^4_{L^6}+1),
\end{eqnarray*}
and using the $L^6$ estimates of $u_h(t_n), U^n$ at $nth$ level with an application of energy arguments, Lemma \ref{ll2} and the Young's inequality yield,
\begin{eqnarray*}
|\big(\mathscr{A}f(u^{n})-f(U^{n}),\updelta_{t}\xi_{u}^{n}\big)| &\leqslant& C\;\big(h^{2k+2}+(\Delta t)^4+\|\xi_{\bq}^{n}\|^2+\|\xi_{u}^{n}-\widehat{\xi}_{u}^{n}\|_+^2+\|\updelta_{t}\xi_{u}^{n}\|^2\big).
\end{eqnarray*}
Putting this inequality in \eqref{eq011}, then multiplying by $2\Delta t$ and summing from $n=1$ to $J$, it follows that
\begin{align*}
 (1-\frac{\Delta t}{4})\|\partial_{t}\xi_{u}^{J}\|^2&+(\frac{1}{2}-C\Delta t)\big(\|\xi_{\bq}^{J+1}\|^2 +\|\xi_{u}^{J+1}-\widehat{\xi}_{u}^{J+1}\|_+^2\big) \leqslant C\;(\Delta t) \Big(\sum_{n=1}^J\big(|h^{2k+2}+(\Delta t)^2|\\&+\|\partial_{t}^2 u_{h}(t_{n})-u_{htt}(t_{n})\|^2+\|\partial_{t}\xi_{u}^{n}\|^2+\|\xi_{\bq}^{n}\|^2 +\|\xi_{u}^{n}-\widehat{\xi}_{u}^{n}\|_+^2\big)+\|\xi_{\bq}^{1}\|^2 +\|\xi_{u}^{1}-\widehat{\xi}_{u}^{1}\|_+^2\Big).
\end{align*}
Now, for $\Delta t >0$ small, $1-2C\Delta t >0$. The second term on the right-hand side of \eqref{eq011} is one that we previously dealt with in the Theorem \ref{th5.2}. The desired outcome is now obtained by utilizing Gr\"{o}nwall's inequality. The last inequality \eqref{s4} in this theorem can be proved similarly as in the conservative case. This concludes the rest of the proof.
 \end{proof}
 \section{One Variation  of  the HDG method}
This section deals with a novel HDG method proposed by Qiu and Shi in \cite{qiu2018hdg}, a variation of the HDG method. It differs from the existing method in that polynomials of degree $k+1$ and $k$, with $k\geqslant 0$, approximate the displacement and its gradient, respectively. As a result,  the convergence of order $O(h^{k+2})$ for the displacement in $L^{\infty}(L^2)$-norm is derived, while the convergence of order $O(h^{k+1})$ is shown for the gradient in $L^{\infty}(L^2)$-norm. 

In this section, we indicate only the differences in the proof of the error analysis. 
With $\bV_h$ and $M_h$ as in Section 2, the space $W_h$ is now changed to
$$W_h=\{w\in L^2(\Omega):w|_K\in \mathcal{P}_{k+1}(K)~\forall ~K\in \mathscr{T}_h\}$$
The new variation of the  HDG formulation is to seek $(u_h(t),\bq_h(t),\widehat{u}_h(t)) \in W_h\times \bV_h\times M_h$  such that for $t\in (0,T]$
\begin{subequations}
\begin{eqnarray}
(\bq_h,\bv_h)+(u_h,\nabla\cdot\bv_h)-\langle\widehat{u}_h,\bv_h\cdot\bn\rangle&=&0\quad\forall \bv_h\in\bV_h,\label{nhdg1}\\
(u_{htt},w_h)+(\bq_h,\nabla w_h)-\langle\widehat{\bq}_h\cdot\bn,w_h\rangle+(f(u_h),w_h)&=&0\quad\forall w_h\in W_h,\label{nhdg2}\\
\langle \widehat{\bq}_h\cdot\textbf{n},\mu_1\rangle_{\mathcal{E}_h/\Gamma_\partial}&=&0\quad\forall\mu_1\in M_h,\label{nhdg3}\\
\langle \widehat{u}_h,\mu_2\rangle_{\Gamma_\partial}&=&0\quad\forall\mu_2\in M_h,\label{nhdg4}
\end{eqnarray}
\end{subequations}
where the new flux is defined as
\begin{equation}
\widehat{\bq}_h\cdot \bn=\bq_h\cdot\bn+\dfrac{\tau}{h_K}(\mathcal{P}u_h-\widehat{u}_h) \text{ on } \mathcal{E}_h.\label{nhdg5}
\end{equation}
Here, $\mathcal{P}$ is the $L^2$ projection onto $M_h.$\\
Energy conservation is the same as in Theorem \ref{2.1} with Corollary \ref{coro}.
\begin{theorem}\label{newth}
  Let $u_0,~u_1 \in L^{\infty}(0,T;H^{k+2})$ and $u \in  L^{\infty}(0,T;H^{k+2}\cap H^1_0)$, $u_t \in L^{2}(0,T; H^{k+2})$. Then for all $h= \max\limits_{K\in \mathscr{T}_h}h_K$ and for all $t \in (0,T]$, the following estimates hold:
\begin{equation}\label{cgt-NHDG}
 \begin{split}
 \|(u_t-u_{ht})(t)\|+\|(\bq-\bq_h)(t)\|+\|\nabla(u-u_h)(t)\| \leqslant {C\;h^{k+1} \Big(\|u_0\|_{H^{k+2}} +\|u\|_{L^{\infty}(0,T;H^{k+2})}+ \|u_t\|_{L^{2}(0,T;H^{k+2})}}\Big).
 \end{split}
\end{equation}
with the additional assumptions $u_{tt}\in L^2(0,T;H^{k+2}),$
\begin{equation}\label{cgt-NHDG1}
 \begin{split}
 {\|(u_{tt}-u_{htt})(t)\|+\|(\bq_t-\bq_{ht})(t)\| \leqslant C\;h^{k+1} \Big(\|u_1\|_{H^{k+2}} +\|u_t\|_{L^{\infty}(0,T;H^{k+2})} + \|u_{tt}\|_{ L^2(0,T;H^{k+2})} \Big).}
 \end{split}
\end{equation}
Moreover,
\begin{equation}\label{super-cgt-NHDG}
 \begin{split}
\|(u-u_h)(t)\|\leqslant { C\;h^{k+2}\Big(\|u_0\|_{H^{k+2}} +\|u_1\|_{H^{k+2}} + \|u\|_{L^{\infty}(0,T;H^{k+2})} +\|u_{tt}\|_{ L^2(0,T;H^{k+2})} \Big).}
\end{split}
\end{equation}  
\end{theorem}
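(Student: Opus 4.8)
The plan is to mirror the semidiscrete analysis of Section~3, replacing the HDG projection of Section~2 by the projection $(\pw,\pV)$ tailored to the Qiu--Shi flux \eqref{nhdg5}, and then to exploit the richer displacement space $W_h=\mathcal{P}_{k+1}(K)$ to obtain the superconvergent bound \eqref{super-cgt-NHDG} directly, without any post-processing. First I would recall from \cite{qiu2018hdg} the projection pair associated with \eqref{nhdg5}, characterised by interior moment conditions against $[\mathcal{P}_{k-1}(K)]^2$ and $\mathcal{P}_k(K)$ and a face condition making $\pV\bq\cdot\bn+\frac{\tau}{h_K}\mathcal{P}\pw u$ match $\bq\cdot\bn+\frac{\tau}{h_K}\mathcal{P}u$ on each $E$. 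The properties I need are the approximation estimates $\|\pV\bq-\bq\|=O(h^{k+1})$ and $\|\pw u-u\|=O(h^{k+2})$ (the latter being the source of the extra order) together with the associated commuting identity. Writing $\br_u=\pw u-u$, $\br_\bq=\pV\bq-\bq$, $\bs_u=\pw u-u_h$, $\bs_\bq=\pV\bq-\bq_h$ and $\widehat{\bs}_u=\mathcal{P}u-\widehat{u}_h$, the error equations then take the same form as \eqref{ereq} but with the trace \eqref{nhdg5}.

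Next I would establish \eqref{cgt-NHDG}. Differentiating the first error equation in time, choosing $\bv_h=\bs_\bq$ and $w_h=\bs_{u_t}$ in the second, and adding, gives an energy identity of the type \eqref{th6} whose left side controls $\frac{d}{dt}\big(\|\bs_\bq\|^2+\|\bs_{u_t}\|^2+|\mathcal{P}\bs_u-\widehat{\bs}_u|^2\big)$, with the last term the $\tau/h_K$-weighted face seminorm. The nonlinear contribution $(f(u)-f(u_h),\bs_{u_t})$ is split into $f(u)-f(\pw u)$, bounded by $C\|\br_u\|$ using boundedness of $\pw u$ in $L^\infty$, and $f(\pw u)-f(u_h)$, bounded by the $L^p$ machinery of Lemma~\ref{ll2} (re-derived for the flux \eqref{nhdg5}) together with the a~priori $L^4$ and $L^6$ bounds of Corollary~\ref{coro}, which carry over verbatim since energy conservation is unchanged. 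Gr\"onwall's lemma, the initial-data estimates paralleling Lemma~\ref{IC-approx-1}, and the projection orders then yield \eqref{cgt-NHDG}; the bound on $\|\nabla(u-u_h)\|$ follows by writing $\nabla(u-u_h)=(\bq-\bq_h)+(\bq_h-\nabla u_h)$ and controlling $\bq_h-\nabla u_h$ through the first error equation and an inverse estimate applied to the $\tau/h_K$-weighted face term. Estimate \eqref{cgt-NHDG1} is obtained by differentiating the error equations once more and repeating the argument exactly as in Lemma~\ref{lem34}, under the added regularity $u_{tt}\in L^2(0,T;H^{k+2})$.

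Finally, for the superconvergence \eqref{super-cgt-NHDG} I would write $\|u-u_h\|\le\|u-\pw u\|+\|\bs_u\|$, where $\|u-\pw u\|=O(h^{k+2})$ is immediate from the degree-$(k+1)$ displacement space. To bound $\|\bs_u\|$ at the same rate I would run the Aubin--Nitsche/duality argument of Section~4 built on the backward dual wave problem \eqref{ereq3} and its stability estimates in Proposition~4.1, testing the dual solution against the error equations and using the BDM projection \eqref{bdm1}. The gain of one power of $h$ comes from the orthogonality of $\bs_\bq$ to $\nabla I_h\phi$ and the order-$(k+2)$ bound on $\|u-\pw u\|$, while the quadratic nonlinear remainder is absorbed using $\|u-u_h\|^2=O(h^{2k+2})$ as in the estimate of the term $J_5$. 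Because the displacement space is already rich enough, no separate $P_{k-1}\bs_u$ step nor element-by-element post-processing is required here, which is the structural simplification afforded by this variant.

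The main obstacle I expect is re-establishing the two projection-dependent ingredients for the modified flux \eqref{nhdg5}: first, the $L^p$ stability estimate of Lemma~\ref{ll2}, whose duality argument must now be carried out with the $\tau/h_K$ scaling and with the $L^2$-projected trace $\mathcal{P}u_h$ in place of $u_h$; and second, the commuting and approximation properties of $(\pw,\pV)$ from \cite{qiu2018hdg} that make the nonlinear and flux terms telescope correctly and supply the crucial order-$(k+2)$ bound for $\br_u$. Once these two facts are in place, the energy and duality steps are routine adaptations of Lemmas~\ref{ll3}, \ref{lem34}, and \ref{ll4}.
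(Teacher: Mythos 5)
Your overall architecture (an energy identity for \eqref{cgt-NHDG}, its time-differentiated version for \eqref{cgt-NHDG1}, and a duality argument on the backward wave problem for \eqref{super-cgt-NHDG}, with no separate post-processing needed because the displacement space already has degree $k+1$) matches the paper's. The genuine gap is at your very first step: you build everything on a projection pair ``associated with \eqref{nhdg5}'', with interior moments against $[\mathcal{P}_{k-1}(K)]^2$ and $\mathcal{P}_k(K)$ and a face condition matching $\pV\bq\cdot\bn+\frac{\tau}{h_K}\mathcal{P}\pw u$ to the exact trace, and you then rely on this projection to make the flux terms telescope so that the energy identity reduces to the clean form \eqref{th6}. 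No such projection is constructed in \cite{qiu2018hdg}, and for the unequal-degree pair $(\mathcal{P}_{k+1}(K),[\mathcal{P}_k(K)]^2)$ a dimension count of the conditions you list does not close (it is off by one), so the object your whole proof rests on is not available. This is not a cosmetic issue: the telescoping is exactly what you invoke to eliminate the face consistency terms, and without it the energy argument stalls at the first estimate.

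What the paper does instead is take $\pw$ and $\pV$ to be the plain $L^2$ orthogonal projections onto $W_h$ and $\bV_h$ (properties \eqref{L2proj}), accept that the error equations then carry a nonvanishing consistency term $\langle \br_{\bq}\cdot\bn+\frac{\tau}{h_K}\mathcal{P}\br_u,\;\bs_{u_t}-\widehat{\bs}_{u_t}\rangle$ in the energy identity, and control it by integrating by parts in time (moving the time derivative onto the projection residual), trace inequalities, and the face estimate $\|\br_u\|_{E}\leqslant Ch^{l_u-1/2}\|u\|_{H^{l_u}(K)}$ with $l_u$ up to $k+2$ --- which is precisely where the extra power of $h$ from the enriched displacement space enters. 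The bounds on $\|\nabla(u-u_h)\|$ and $\|\bs_u\|$ then come from the discrete Poincar\'e inequality of \cite[Lemma 3.2]{qiu2016hdg} rather than your inverse-estimate route (both are workable). Your superconvergence step is essentially the paper's, but in the $L^2$-projection framework the order-$(k+2)$ gain in the duality argument comes from pairing $\frac{\tau}{h_K}\mathcal{P}\br_u$ and $\br_{\bq}\cdot\bn$ with differences of projections of the time-integrated dual solution, not from an orthogonality supplied by a tailored projection. If you replace your projection step by this $L^2$-projection bookkeeping and supply the missing treatment of the consistency face terms, the rest of your plan goes through.
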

Proof of this Theorem is given in the next two subsections.
\subsection{Error Estimates} 
We redefine the projections $\pw$ and $\pV$ as the standard $L^2$ orthogonal projection onto $W_h$ and $\bV_h$, respectively, and followings are the properties of these projections \cite{qiu2018hdg}:
\begin{equation}\left.
\begin{aligned}
    \|\br_u\| &\leqslant C h^{l_u} \|u\|_{H^{l_u}(\mathscr{T}_h)}, && \quad \forall \quad 0 \leqslant l_u \leqslant k+2, \\
    \|\widehat{\br}_u\|_{\mathcal{E}_h} &\leqslant C h^{l_u - 1/2} \|\bq\|_{H^{l_u}(\mathscr{T}_h)}, && \quad \forall \quad 1 \leqslant l_u \leqslant k+1, \\
    \|\br_u\|_{E} &\leqslant C h^{l_u - 1/2} \|u\|_{H^{l_u}(K)}, && \quad \forall \quad 1 \leqslant l_u \leqslant k+2, \\
    \|\br_\bq\| &\leqslant C h^{l_q} \|\bq\|_{H^{l_q}(\mathscr{T}_h)}, && \quad \forall \quad 0 \leqslant l_q \leqslant k+1, \\
    \|\widehat{\br}_\bq\|_{E} &\leqslant C h^{l_q - 1/2} \|\bq\|_{H^{l_q}(K)}, && \quad \forall \quad 1 \leqslant l_q \leqslant k+1, \\
    \|\br_\bq\|_{E} &\leqslant C h^{l_q - 1/2} \|\bq\|_{H^{l_q}(K)}, && \quad \forall \quad 1 \leqslant l_q \leqslant k+1, \\
    \|w\|_{E} &\leqslant C h^{1/2} \|w\|_K, && \quad \forall \quad w \in W_h.
\end{aligned}\right\}\label{L2proj}
\end{equation}
Here, {$E$} denotes the face of an element $K$. Also, define  
\[\|\psi\|^2 = \sum_{K\in\mathscr{T}_h} \|\psi\|^2_{L^2(K)}, ~{\|\psi-\widehat{\psi}\|_{\partial\mathscr{T}_h}^2 = \sum_{K\in\mathscr{T}_h} \left\Vert\left(\dfrac{\tau}{h_K}\right)^{1/2}(\mathcal{P}\psi-\widehat{\psi})\right\Vert_{L^2(\partial K)}}.\]
Using $L^2$ projections $(\pw, \pV, \mathcal{P})$, error equations in $\bs_u, ~\bs_\bq$ and $\widehat\bs_{u}$ are written for all $(\bv_h,w_h,\mu_1, \mu_2)\in  \bV_h\times W_h\times M_h\times M_h$ as
\begin{subequations}\label{ereq1}
\begin{eqnarray}
(\bs_\bq,\bv_h)+(\bs_u,\nabla\cdot\bv_h)-\langle \widehat{\bs}_u,\bv_h\cdot\bn\rangle&=&0,\label{nee1}\\
(\bs_{u_{tt}},w_h)+(\bs_\bq,\nabla w_h)-\langle(\bq-\widehat{\bq}_h)\cdot\bn,w_h\rangle+(f(u)-f(u_h),w_h)
&=&0,\label{nee2}\\
\langle (\bq-\widehat{\bq}_h)\cdot\bn,\mu_1\rangle_{\mathcal{E}_h/\Gamma_\partial}&=&0,\label{nee4}\\
\langle\widehat{\bs}_u,\mu_2\rangle_{\Gamma_\partial}&=&0,\label{nee5}
\end{eqnarray}
\end{subequations}
where the numerical trace for flux is defined as
\begin{align}\label{neflux}
    (\bq-\widehat{\bq}_h)\cdot\bn=(\bs_{\bq}-\br_\bq)\cdot\bn-\dfrac{\tau}{h_K}(\mathcal{P}\bs_u-\widehat{\bs}_u)+\dfrac{\tau}{h_K}\mathcal{P}\br_u.
\end{align}
With the help of the error equations \eqref{ereq1}, we derive the estimates of $\bs_\bq$. Differentiating equation \eqref{nee1} with respect to time and set $\bv_h= \bs_{\bq}$ and choose $w_h= \bs_{u_t}$ in \eqref{nee2}, then adding these resulting equations to obtain
\begin{align}
     \frac{1}{2}\frac{d}{dt}\left(\|\bs_{\bq}\|^2+\|\theta_{u_{t}}\|^2\right)+\langle
     {\bs}_\bq\cdot\bn,\bs_{u_t}-\widehat{\bs}_{u_t}\rangle-\langle(\bq-\widehat{\bq}_h)\cdot\bn,\bs_{u_t}\rangle+(f(u)-f(u_h),\theta_{u_{t}})=0.
\end{align}
Adding the zero-term $\langle(\bq-\widehat{\bq}_h)\cdot\bn,\widehat{\bs}_{u_t}\rangle$, a use of   \eqref{neflux} shows
\begin{align*}
     \frac{1}{2}\frac{d}{dt}\left(\|\bs_{\bq}\|^2+\|\theta_{u_{t}}\|^2+{\|\bs_u-\widehat{\bs}_{u}\|_{\partial\mathscr{T}_h}^2}\right)+\left\langle
     {\br}_{\bq}\cdot\bn +\dfrac{\tau}{h_K}\mathcal{P}\br_u,\bs_{u_t}-\widehat{\bs}_{u_t}\right\rangle+(f(u)-f(u_h),\theta_{u_{t}})=0.
\end{align*}
On integrating from $0$ to $t$, it follows that
\begin{align}
     \|\bs_{\bq}\|^2+\|\theta_{u_{t}}\|^2+{\|\bs_u-\widehat{\bs}_{u}\|_{\partial\mathscr{T}_h}^2}=-\int_0^t\Bigg(\left\langle
     {\br}_{\bq}\cdot\bn +\dfrac{\tau}{h_K}\mathcal{P}\br_u,\bs_{u_t}-\widehat{\bs}_{u_t}\right\rangle+(f(u)-f(u_h),\theta_{u_{t}})\Bigg)\;ds.\label{57}
\end{align}
For estimating the right-hand side terms in \eqref{57}, we first rewrite  the first term as
\begin{align*}
    \int_0^t\Bigg(\left\langle{\br}_{\bq}\cdot\bn+\dfrac{\tau}{h_K}\mathcal{P}\br_u,\bs_{u_t}-\widehat{\bs}_{u_t}\right\rangle\Bigg)\;ds =&\int_0^t\Bigg(\dfrac{d}{ds}\left\langle{\br}_{\bq}\cdot\bn+\dfrac{\tau}{h_K}\mathcal{P}\br_u,\bs_{u}-\widehat{\bs}_{u}\right\rangle\\&-\left\langle{\br}_{\bq_t}\cdot\bn+\dfrac{\tau}{h_K}\mathcal{P}\br_{u_t},\bs_{u}-\widehat{\bs}_{u}\right\rangle\Bigg)\;ds,\\
   =&\left\langle{\br}_{\bq}\cdot\bn+\dfrac{\tau}{h_K}\mathcal{P}\br_u,\bs_{u}-\widehat{\bs}_{u}\right\rangle(t)-\left\langle{\br}_{\bq}\cdot\bn+\dfrac{\tau}{h_K}\mathcal{P}\br_u,\bs_{u}-\widehat{\bs}_{u}\right\rangle(0)\\&-\int_0^t\left\langle{\br}_{\bq_t}\cdot\bn+\dfrac{\tau}{h_K}\mathcal{P}\br_{u_t},\bs_{u}-\widehat{\bs}_{u}\right\rangle\;ds.
    \end{align*}
Using the properties of $L^2$ projection $\mathcal{P}$, the Cauchy-Schwarz inequality, and the trace's inequality, we arrive at
\begin{align}
  \langle{\br}_{\bq}\cdot\bn,\bs_{u}-\widehat{\bs}_{u}\rangle&\leqslant h^{1/2}\|\br_\bq\cdot\bn\|_{\partial\mathscr{T}_h}\|h_{K}^{-1/2}(\bs_{u}-\widehat{\bs}_{u})\|_{\partial\mathscr{T}_h}\nonumber\\&\leqslant {C\;h^{l_q}\|\bq\|_{H^{l_q}(\mathscr{T}_h)}\|\bs_{u}-\widehat{\bs}_{u}\|_{\partial\mathscr{T}_h},~~~ 0\leqslant l_q \leqslant k+1} \label{58}
\end{align}
Similarly, 
\begin{align}
    \left\langle\dfrac{\tau}{h_K}\mathcal{P}\br_u,\bs_{u}-\widehat{\bs}_{u}\right\rangle
    &\leqslant \|\tau h_{K}^{-1/2}\br_u\|_{\partial\mathscr{T}_h}\|h_{K}^{-1/2}(\bs_{u}-\widehat{\bs}_{u})\|_{\partial\mathscr{T}_h}\nonumber\\
    &\leqslant {C\; h^{l_u-1}\|u\|_{H^{l_u}(\mathscr{T}_h)}\|\bs_{u}-\widehat{\bs}_{u}\|_{\partial\mathscr{T}_h},~~~1\leqslant l_u \leqslant k+2}\label{59}
\end{align}
The non-linear term is dealt with similarly as done in Section 3. Hence, we find that
\begin{align}
 \|f(u)-f(u_h)\|^2 &\leqslant C\;\big(\|\br_u\|^2+\|\theta_{\bq}\|^2 + {\|\theta_u-\widehat{\theta}_u\|_{\partial\mathscr{T}_h}^2}\big). \label{150}
 \end{align}
 Substituting \eqref{58}, \eqref{59} and \eqref{150} in \eqref{57}, and using the Young's inequality with kickback argument, we  arrive at
\begin{align}
  \|\bs_{\bq}\|^2+\|\theta_{u_{t}}\|^2+\|\bs_u-\widehat{\bs}_{u}\|_{\partial\mathscr{T}_h}^2\leqslant & C\;\Big(h^{2l_q}(\|\bq\|^2_{H^{l_q}(\mathscr{T}_h)}+\|\nabla u_0\|^2_{H^{l_q}(\mathscr{T}_h)})+h^{2l_u-2}(\|u\|^2_{H^{l_u}(\mathscr{T}_h)}+\|u_0\|^2_{H^{l_u}(\mathscr{T}_h)})\Big)
  \nonumber\\
  &+ C\;\int_0^t\big(h^{2l_u-2}\|u_t\|^2_{H^{l_u}(\mathscr{T}_h)} +h^{2l_q}\|\bq_t\|^2_{H^{l_q}(\mathscr{T}_h)} 
  +h^{2l_u}\|u\|^2_{H^{l_u}(\mathscr{T}_h)}  \big)\;ds  \nonumber\\
  & +C\;\int_0^t\big(\|\theta_{\bq}\|^2+\|\theta_{u_{t}}\|^2+ \|\theta_u-\widehat{\theta}_u\|_{\partial\mathscr{T}_h}^2\big)\;ds.
   \end{align}
Then, an application of Gr\"{o}nwall's inequality  $ 1\leqslant l_u \leqslant k+2 $ and $ 0\leqslant l_q \leqslant k+1,$  yields
\begin{align}
     \|\bs_{\bq}\|^2+\|\theta_{u_{t}}\|^2+ \|\bs_u-\widehat{\bs}_{u}\|_{\partial\mathscr{T}_h}^2\leqslant C\;h^{2(k+1)} \Big( \|u_0\|^2_{H^{k+2}}+
     \|u\|^2_{H^{k+2}}
     + \int_0^t \big(\|u_t\|^2_{H^{k+2}} +\|\bq_t\|^2_{H^{k+1}}+ \|u\|^2_{H^{k+2}}\big)\;ds\Big).
     \label{new}
\end{align}
Next, a  use of the estimate of $\bs_u$ from article {\cite[Lemma 3.2]{qiu2016hdg}} shows
\begin{align*}
    \|\bs_u\|^2  &\leqslant C \;\big(\|\nabla \bs_u\|^2+\|h_{K}^{-1/2}(\bs_{u}-\widehat{\bs}_{u})\|^2_{\partial\mathscr{T}_h}\big),\\
     &\leqslant C\; \big(\|\bs_\bq\|^2+\|h_{K}^{-1/2}(\mathcal{P}\theta_u-\widehat{\theta}_u)\|^2_{\partial\mathscr{T}_h}\big),\\
   & \leqslant C\; \big(\|\bs_\bq\|^2+\|\theta_{u_{t}}\|^2+{\|\bs_u-\widehat{\bs}_{u}\|_{\partial\mathscr{T}_h}^2}\big)
\end{align*}
Hence, we derive the optimal estimates of $\|\bs_u\|$  from \eqref{new}. A use of Lemma~\ref{IC-approx-1} and Theorem~\ref{tt1} in \eqref{new} completes the proof of \eqref{cgt-NHDG}.

{In order to prove \eqref{cgt-NHDG1}, differentiate \eqref{nee1} twice with respect to time and differentiate \eqref{nee2}-\eqref{nee5} and then choose $\bv_h:=\bs_{\bq_t}$ and $w_h:=\theta_{u_{tt}}$ to obtain
\begin{align}
     \frac{1}{2}\frac{d}{dt}\left(\|\bs_{\bq_t}\|^2+\|\theta_{u_{tt}}\|^2\right)+\langle
     {\bs}_{\bq_t}\cdot\bn,\bs_{u_{tt}}-\widehat{\bs}_{u_{tt}}\rangle-\langle(\bq_t-\widehat{\bq}_{ht})\cdot\bn,\bs_{u_{tt}}\rangle+(f(u_t)-f(u_{ht}),\theta_{u_{tt}})=0.
\end{align}
Use of \eqref{neflux}, after differentiating with respect to time shows
\begin{align*}
     \frac{1}{2}\frac{d}{dt}\left(\|\bs_{\bq_t}\|^2+\|\theta_{u_{tt}}\|^2+{\|\bs_{u_t}-\widehat{\bs}_{u_t}\|_{\partial\mathscr{T}_h}^2}\right)+\left\langle
     {\br}_{\bq_t}\cdot\bn+\dfrac{\tau}{h_K}\mathcal{P}\br_{u_t},\bs_{u_{tt}}-\widehat{\bs}_{u_{tt}}\right\rangle+(f(u_t)-f(u_{ht}),\theta_{u_{tt}})=0.
\end{align*}
Proceed similarly as done in proving \eqref{cgt-NHDG},
\begin{align*}
    \int_0^t\Bigg(\left\langle
     {\br}_{\bq_t}\cdot\bn+\dfrac{\tau}{h_K}\mathcal{P}\br_{u_t},\bs_{u_{tt}}-\widehat{\bs}_{u_{tt}}\right\rangle\Bigg)\;ds = &
   \left\langle{\br}_{\bq_t}\cdot\bn+\dfrac{\tau}{h_K}\mathcal{P}\br_{u_t},\bs_{u_t}-\widehat{\bs}_{u_t}\right\rangle(t)\\&-\left\langle{\br}_{\bq_t}\cdot\bn+\dfrac{\tau}{h_K}\mathcal{P}\br_{u_t},\bs_{u_t}-\widehat{\bs}_{u_t}\right\rangle(0)\\&-\int_0^t\left\langle{\br}_{\bq_{tt}}\cdot\bn+\dfrac{\tau}{h_K}\mathcal{P}\br_{u_{tt}},\bs_{u_t}-\widehat{\bs}_{u_t}\right\rangle\;ds.
    \end{align*}
    Hence
 \begin{align*} 
   \left|\left\langle{\br}_{\bq_t}\cdot\bn+\dfrac{\tau}{h_K}\br_{u_t},\bs_{u_t}-\widehat{\bs}_{u_t}\right\rangle\right|\leqslant C\;(h^{l_q}\|\bq_t\|_{H^{l_q}(\mathscr{T}_h)}+ h^{l_u-1}\|u_t\|_{H^{l_u}(\mathscr{T}_h)})\|\bs_{u_t}-\widehat{\bs}_{u_t}\|_{\partial\mathscr{T}_h},\\ \forall ~0\leqslant l_q \leqslant k+1 ,~ 1\leqslant l_u \leqslant k+2.
    \end{align*}  
The non-linear term is dealt with similarly to Section 3. Thus, using the Young's inequality with kickback argument,
\begin{align*}
     \|\bs_{\bq_t}\|^2+\|\theta_{u_{tt}}\|^2+\|\bs_{u_t}-\widehat{\bs}_{u_t}\|_{\partial\mathscr{T}_h}^2\leqslant& C\;\Big(h^{2l_q}(\|\nabla u_1\|^2_{H^{l_q}(\mathscr{T}_h)}+\|\bq_t\|^2_{H^{l_q}(\mathscr{T}_h)})+h^{2l_u-2}(\|u_1\|^2_{H^{l_u}(\mathscr{T}_h)}+\|u_t\|^2_{H^{l_u}(\mathscr{T}_h)})
  \nonumber\\
  &+\int_0^t\big(h^{2l_u-2}\|u_{tt}\|^2_{H^{l_u}(\mathscr{T}_h)} +h^{2l_q}\|\bq_{tt}\|^2_{H^{l_q}(\mathscr{T}_h)} 
 +\|\theta_{\bq_t}\|^2 + \|\theta_{u_t}-\widehat{\theta}_{u_t}\|_{\partial\mathscr{T}_h}^2 \nonumber\\
 &+\|\theta_{u_{tt}}\|^2+\|\br_{u_t}\|^2\big)\;ds\Big).
\end{align*}
Then, an application of Gr\"{o}nwall's inequality  $ 1\leqslant l_u \leqslant k+2 $ and $ 0\leqslant l_q \leqslant k+1,$  yields
\begin{align}
     \|\bs_{\bq_t}\|^2+\|\theta_{u_{tt}}\|^2+\|\bs_{u_t}-\widehat{\bs}_{u_t}\|_{\partial\mathscr{T}_h}^2\leqslant C\;h^{2(k+1)} \big(\|u_1\|^2_{H^{k+2}}+\|u_t\|^2_{H^{k+2}}
  +\int_0^t\big(\|u_{tt}\|^2_{H^{k+2}} +\|\bq_{tt}\|^2_{H^{k+1}} 
  \big)\;ds\big).\label{qt_estimate}
\end{align}}
This completes  the rest of  the proof.

\subsection{Super-convergence Results}
This section derives the third result  \eqref{super-cgt-NHDG}  of the Theorem \ref{newth}. We use the same dual problem \eqref{ereq3} defined in Section 4 with the same regularity results \eqref{p0} and \eqref{p2}.  Multiplying by $\theta_u$ to \eqref{ep1} and integrating over $\Omega$, then split the time derivative and use \eqref{ep3}-\eqref{ep5}
to find that
\begin{align*}
(\Theta,\theta_u)=  (\phi_s(0),\theta_u(0))-(\phi(0),\theta_{u_s}(0))+\int_0^t\Big(-(\phi,\theta_{u_{ss}}) + \big(\nabla\cdot (\nabla\phi),\theta_u\big)-( f_u(u)\phi,\theta_u)\Big)ds.
\end{align*}
Here, we set $u_h(0)=\pw u_0$ and $u_{ht}(0)=\pw u_1,$ which makes the first and second terms on the right-hand side zero. Using the $L^2$ projection and integration by parts yields
\begin{align*}
(\Theta,\theta_u)=\int_0^t\Big(-(\phi,\theta_{u_{ss}})+ \big(\nabla\cdot (\pV\nabla\phi),\theta_u\big)+\langle(\nabla\phi-\pV\nabla\phi)\cdot\bn,\theta_u\rangle-( f_u(u)\phi,\theta_u)\Big)ds.
\end{align*}
Using \eqref{nee1} with $\bv = \pV\nabla\phi$, we obtain
\begin{align*}
(\Theta,\theta_u)= \int_0^t\Big(-(\phi,\theta_{u_{ss}}) -(\bs_\bq,\pV\nabla\phi) +\langle\widehat{\bs}_u,\pV\nabla\phi\rangle+\langle(\nabla\phi-\pV\nabla\phi)\cdot\bn,\theta_u\rangle-( f_u(u)\phi,\theta_u)\Big)ds.
\end{align*}
The second term on the right-hand side can be rewritten as
\[(\bs_\bq,\pV\nabla\phi)=(\bs_\bq,\pV\nabla\phi-\nabla \pw \phi+\nabla \pw \phi)=(\bs_\bq,\nabla \pw \phi)+(\bs_\bq,\nabla\phi-\nabla \pw \phi).\]
Hence, using  \eqref{nee2} for the term $(\bs_\bq,\nabla \pw \phi)$, we arrive at
\begin{align*}
(\Theta,\theta_u)=& \int_0^t\Big(-(\phi,\theta_{u_{ss}}) +(\bs_{u_{ss}},\pw \phi)-\langle(\bq-\widehat{\bq}_h)\cdot\bn,\pw \phi\rangle+(f(u)-f(u_h),\pw \phi)\\&-(\bs_\bq,\nabla\phi-\nabla \pw \phi)+\langle\widehat{\bs}_u,\pV\nabla\phi\rangle+\langle(\nabla\phi-\pV\nabla\phi)\cdot\bn,\theta_u\rangle-( f_u(u)\phi,\theta_u)\Big)ds,\\
=&Q_1+Q_2+Q_3+Q_4+Q_5+Q_6+Q_7+Q_8.
\end{align*}
{Substituting the identity \eqref{identity} in terms $Q_3$ and $Q_5$, a use of \eqref{nee4} with properties of $L^2$ projection shows
\begin{align*}
\int_0^tQ_3 ds=& \langle(\bq-\widehat{\bq}_h)(0)\cdot\bn,\mathcal{P}\undertilde{\phi}(0)- \pw\undertilde{\phi}(0)\rangle\\&+\langle(\bs_{\bq}-\br_\bq)\cdot\bn-\dfrac{\tau}{h_K}(\mathcal{P}\bs_u-\widehat{\bs}_u)+\dfrac{\tau}{h_K}\mathcal{P}\br_u,\mathcal{P}\undertilde{\phi}- \pw\undertilde{\phi}\rangle.
\end{align*}
And
\begin{align*}
\int_0^tQ_5 ds= \big((\bs_\bq)(0),\nabla\undertilde{\phi}(0)-\nabla \pw \undertilde{\phi}(0)\big)+\int_0^t(\bs_{\bq_s},\nabla\undertilde{\phi}-\nabla \pw \undertilde{\phi})\;ds.
\end{align*}
With the help of the Cauchy Schwarz inequality, the trace inequality, and the properties of $L^2$ projections from \eqref{L2proj}, we now arrive at
\begin{align}
    |\langle(\bs_{\bq}-\br_\bq)\cdot\bn,\mathcal{P}\undertilde{\phi}- \pw\undertilde{\phi}\rangle|&\leqslant C h^{-1/2}(\|\bs_{\bq}\cdot\bn\|+\|\br_\bq\cdot\bn\|)\|\undertilde{\phi}- \pw\undertilde{\phi}\|,\nonumber\\
    &\leqslant C h (\|\bs_{\bq}\cdot\bn\|+\|\br_\bq\cdot\bn\|)\|\undertilde{\phi}\|_2,\label{q31}
\end{align}
similar proceed for the other two terms,
\begin{align}
    \left|\left\langle\dfrac{\tau}{h_K}\mathcal{P}\br_u,\mathcal{P}\undertilde{\phi}- \pw\undertilde{\phi}\right\rangle\right|\leqslant C h^{-3/2}\|\br_u\|\|\undertilde{\phi}- \pw\undertilde{\phi}\|\leqslant C h^{k+2}\|u\|_{H^{k+2}}\|\undertilde{\phi}\|_2,\label{q32}
    \end{align}
    similarly
\begin{align}
    \left|\left\langle\dfrac{\tau}{h_K}(\mathcal{P}\bs_u-\widehat{\bs}_u)\right\rangle\right|\leqslant C h^{k+2}\|u\|_{H^{k+2}}\|\undertilde{\phi}\|_2. \label{q33}
    \end{align}
Combining \eqref{q31}, \eqref{q32} and \eqref{q33} gives the estimate for $Q_3$.
Estimate \eqref{qt_estimate}, help us to find the bound for $Q_5$.
 $Q_6$ and $Q_7$ can be combined by adding the zero term $\langle\widehat{\bs}_u, \nabla \phi\cdot\bn\rangle$ as $\widehat{\bs}_u$ is single-valued across the interior faces and us the identity \eqref{identity}, therefore,
\[Q_6+Q_7 = \langle(\bs_u-\widehat{\bs}_u)(0),(\nabla\undertilde{\phi}(0)-\pV\nabla\undertilde{\phi}(0))\cdot\bn\rangle+\int_0^t \langle\bs_{u_s}-\widehat{\bs}_{u_s},(\nabla\undertilde{\phi}-\pV\nabla\undertilde{\phi})\cdot\bn\rangle\]
\begin{align*}
|\langle\bs_{u_s}-\widehat{\bs}_{u_s},(\nabla\undertilde{\phi}-\pV\nabla\undertilde{\phi})\cdot\bn\rangle| &\leqslant \|\bs_{u_s}-\widehat{\bs}_{u_s}\|\| \nabla (\phi- I_h\undertilde{\phi})\|\; \\
&\leqslant C\;h^{k+2} \big(\|u_1\|^2_{H^{k+2}}+\|u_t\|^2_{H^{k+2}}
  +\int_0^t\|u_{tt}\|^2_{H^{k+2}}\;ds\big)\;\|\undertilde{\phi}\|_{H^2}.
\end{align*} }
For the nonlinear terms, we  rewrite as
\[Q_4+Q_8=(f_u(u)\bs_u,\pw\phi-\phi)+(f_u(u)\br_u,\pw\phi)+(f_{uu}(u)(u-u_h)^2,\pw\phi).\]
To complete the derivation, we proceed by bounding each term systematically.  Using the estimates of $\bs_{u_{ss}}$ from \eqref{cgt-NHDG1}, we bound $Q_1$ and $Q_2$. For $Q_5$, proceed similarly as done in subsection 4.2. At last, combining all the terms, applying regularity results \eqref{p0} and \eqref{p2} with the properties of $L^2$ projections, we derive the required result.  This completes the rest of the proof.

\section{Numerical Experiments}
This section illustrates three numerical examples to highlight our proposed methods' convergence, superconvergence, and energy conservation properties.\\

\noindent\textbf{Example-1}: 
 In this example, we consider the domain $\Omega = [0, 1]\times[0, 1]$ with homogeneous Dirichlet boundary conditions.
 The source terms $f_1(x,y,t)$, and the initial conditions are chosen so that the exact solution $u(x,y,t) = t^2\sin(\pi x)\sin(\pi y)$ for the nonlinear Klein-Gordon equation
\begin{align*}
&u_{tt}-\Delta u+u^3-u=f_1(x,y,t)\text{~~ in }[0, 1]\times[0, 1]\times(0,T],\\
&u|_{t=0}=u_0,~u_t|_{t=0}=u_1\text{~~~~~~~~~~~ in }[0, 1]\times[0, 1].
\end{align*}
The second-order discrete-time Galerkin scheme
is applied for the time discretization and the space discretization; we choose polynomial degrees $k = 1, 2, 3$ with mesh parameters $h=1/2^{m}$, for $m=1, 2, 3, 4$. The time
step is chosen to be $\Delta t = h^{(k+1)/2}$ with final time $T=1$.  Table \ref{table-1} confirms our numerical results with our theoretical findings.\\

\noindent\textbf{Example-2}: With the same problem and domain as in Example 1 and the chosen source term $f_2(x,y,t)$, and the initial conditions so that the exact solution $u=\exp(2t^2)\sin(\pi x)\sin(\pi y).$ In Table \ref{table-2}, we observe that numerical orders of convergence and superconvergence results are the same as have been predicted by our theoretical results.\\ 

\noindent\textbf{Example-3}: Next, we consider $u=\tanh(\frac{x}{\sqrt{3}}-t),$ can be the solution of nonlinear traveling wave. With the same problem and domain as mentioned in Example 1 and the chosen source term $f_2(x,y,t)$, with the initial conditions, we obtain the superconvergence for higher degree polynomials, see Table \ref{table-3}). \\

\noindent\textbf{Example-4}: We consider the following nonlinear wave equation in the unit square: $\Omega = [0, 1]\times[0, 1]$ with source terms $f_1(x,y,t)=0$, with the given initial conditions,
\begin{align*}
&u_{tt}-\Delta u+u^3-u=0\quad\quad\text{~~~~~~~~~~~~~~~~~~~~~~~~~~~~~~~~~~~~~~~~~~~~~~~ in }[0, 1]\times[0, 1]\times(0,T],\\
&u|_{t=0}=20x^2(1-x)^2y^2(1-y)^2,~u_t|_{t=0}=2\sin(2\pi x)\sin(2\pi y)\quad\text{ in }[0, 1]\times[0, 1].
\end{align*}
For this example, we do not have the explicit exact solution; therefore, we use energy conservation property.
We choose polynomial degrees $k = 1$ with mesh parameters $h=1/2^{m}$ for $m=1, 2, 3, 4$. The time
step is chosen to be $\Delta t = 0.1$ and $n =1/\Delta t $ with final time $T=1$. It is observed that our computational results in Table \ref{table-4} are conforming to the energy conservation for $n\geqslant 1$ as has been observed theoretically.
\begin{table}[H]
\begin{center}
\begin{tabular}{||c c c c c c c c||} 
 \hline
    $k$ & $m$ & $\|u-u_h\|$ & &$\|\bq-\bq_h\|$ & & $\|u-u^{*}_{h}\|$ & \\ [0.5ex] 
     & & Error& E.O.C& Error & E.O.C & Error & E.O.C \\
 \hline\hline
 \multirow{4}{*}{1} &  1  & 1.83e-1  &        & 5.44e-1  &        & 4.50e-2 &   \\
    &  2  & 6.46e-2  & 1.5101 & 1.47e-1  &  1.8912 & 5.20e-3 & 3.1017    \\
    &  3  & 1.34e-2  &  2.2697 & 2.98e-2  &  2.2994 & 5.00e-4 & 3.2694   \\
    &  4  & 3.40e-3  &  1.9631 & 7.40e-3  &  2.0021 & 1.00e-4  &  3.0221
    \\ \hline
 \multirow{4}{*}{2}  &  1   & 4.21e-2 &         & 7.74e-2  &        & 5.00e-3 &   \\
    &  2   & 4.80e-3  & 3.1226 & 9.90e-3  & 2.9640 & 3.00e-4 & 4.0556    \\
    &  3   & 6.00e-4  & 3.0169 & 1.20e-3  & 3.0688 & 1.00e-5 & 4.0999   \\
    &  4   & 1.00e-4  & 3.0158 & 1.00e-4  & 3.0253 & 1.00e-6  & 4.0116   \\
 \hline
 \multirow{4}{*}{3} &  1   & 8.60e-3  &        & 2.11e-2  &        & 1.30e-3 &   \\
    &  2   & 5.00e-4  & 4.1152 & 1.20e-3  & 4.1710 & 1.00e-4 & 5.1703   \\
    &  3   & 1.00e-5  & 4.0331 & 1.00e-4  & 4.0807 & 1.00e-5 & 5.0574   \\
    &  4   & 1.00e-6 & 3.9941  & 1.00e-5 & 4.0138 & 1.00e-6  & 5.0117 \\  
 \hline 
\end{tabular}
\caption{\label{table-1} The
$L_{2}$-Errors and Estimated Orders of Convergence for Example 1.}
\end{center}
\end{table}
\begin{table}[H]
\begin{center}
\begin{tabular}{||c c c c c c c c||} 
 \hline
    $k$ & $m$ & $\|u-u_h\|$ & &$\|\bq-\bq_h\|$ & & $\|u-u^{*}_{h}\|$ & \\ [0.5ex] 
     & & Error& E.O.C& Error & E.O.C & Error & E.O.C \\
 \hline\hline
\multirow{4}{*}{1} &  1  & 6.77e-1  &        & 5.23e-0  &        & 6.65e-1 &   \\
    &  2  & 1.83e-1  &  1.7470 & 1.64e-0  &  1.6729 & 5.54e-2 & 2.7870   \\
    &  3  & 9.18e-2  &  2.0836 & 5.43e-1  & 1.7944  & 1.18e-2 & 3.0439   \\
    &  4  & 5.65e-3  &  1.9906 & 1.61e-1  &  1.9502  & 7.93e-3  &  2.9804  \\
 \hline
\multirow{4}{*}{2}  &  1   & 6.52e-1 &         & 1.08e-0  &        & 9.04e-2 &   \\
    &  2   & 3.57e-2  & 4.1904 & 1.89e-1  & 2.5145 & 1.30e-2 & 3.7951    \\
    &  3   & 3.60e-3  & 3.3099 & 3.06e-2  & 2.6259 & 8.00e-4 & 4.0171   \\
    &  4   &5.00e-4  & 2.9537 & 6.30e-3  &  2.8868 & 1.00e-4  & 4.1590 \\
 \hline
\multirow{4}{*}{3} &  1   & 6.28e-2  &        & 5.33e-1  &        & 5.91e-2 &   \\
    &  2   & 3.09e-3  & 4.0203 & 2.54e-2  & 4.3924 & 2.30e-3 & 4.8996   \\
    &  3   & 2.00e-4  & 4.1077& 1.10e-3  & 4.5548 & 1.00e-4 & 4.9844   \\
    &  4   & 1.39e-5 &  4.0046  & 9.12e-5 & 4.5665 & 2.79e-6  & 4.9810 \\  
 \hline
\end{tabular}
\caption{\label{table-2} The
$L_{2}$-Errors and Estimated Orders of Convergence for Example 2.}
\end{center}
\end{table}
\begin{table}[H]
\begin{center}
\begin{tabular}{||c c c c c c c c||} 
 \hline
    $k$ & $m$ & $\|u-u_h\|$ & &$\|\bq-\bq_h\|$ & & $\|u-u^{*}_{h}\|$ & \\ [0.5ex] 
     & & Error& E.O.C& Error & E.O.C & Error & E.O.C \\
 \hline\hline
 \multirow{4}{*}{1} &  1  &4.44e-2  &        & 2.95e-2  &        & 8.89e-4 &   \\
    &  2  & 6.60e-3  & 2.4571 & 6.70e-3  &  2.1305 & 1.60e-4 & 2.8746    \\
    &  3  & 1.20e-3  &  2.4239 & 1.80e-3   &  1.9218 & 2.00e-4 & 2.4726   \\
    &  4  & 1.00e-4  &  2.2456 & 7.00e-4  &  2.0021 & 1.00e-5  &  3.0256
    \\ \hline
 \multirow{4}{*}{2}  &  1   & 6.50e-3 &         & 1.59e-2  &        & 3.50e-4 &   \\
    &  2   & 3.50e-3  & 4.3343 & 1.00e-3  & 3.9765 & 1.09e-5 & 4.3122    \\
    &  3   & 2.00e-4  & 3.9873 & 1.00e-4  & 2.7893 & 1.00e-5 & 3.9956   \\
    &  4   & 1.00e-4  & 3.4589 & 1.00e-5  & 3.0253 & 1.00e-6  & 4.0116   \\
 \hline
 \multirow{4}{*}{3} &  1   & 8.60e-3  &        & 7.60e-3  &        & 1.60e-3 &   \\
    &  2   & 1.60e-4  & 5.1964 & 3.20e-4  & 5.1847 & 1.30e-5 & 5.1916   \\
    &  3   & 1.00e-5  & 5.0087 & 1.00e-4  & 5.0003 & 1.00e-5 & 5.0079   \\
    &  4   & 1.00e-6 & 5.0415  & 1.00e-5 & 5.0012 & 1.00e-6  & 5.0117 \\  
 \hline 
\end{tabular}
\caption{\label{table-3} The
$L_{2}$-Errors and Estimated Orders of Convergence for Example 3.}
\end{center}
\end{table}
\begin{table}[H]
\begin{center}
\begin{tabular}{||c c c c c ||}
 \hline
$n$ & $m=1$ & $m= 2$ & $m=3 $ & $m=4 $\\ [0.5ex] 
 \hline\hline
 1   & 0.6402    &  0.0968  &  0.4813 &    0.3576\\
2  & 1.3878e-17  & 3.5534e-15 &1.7765e-14 &3.1972e-14\\
3   &  1.3878e-17  & 1.7767e-15   &1.7765e-14 &5.3300e-15\\
4   & 1.3878e-17& 5.3297e-15 &1.1421e-14 &3.5500e-15\\
5   & 1.3878e-17& 7.1052e-15 &1.0660e-14 &5.5329e-14\\
6   &  2.7756e-17  &  8.8823e-15 &2.4876e-14 &3.3730e-14\\
7   & 2.7756e-17  & 0.0000e-0   &1.0660e-14 &5.5070e-14\\
8   & 4.1633e-17  & 3.5537e-15&3.5534e-14 &9.2375e-14\\
9   & 4.1633e-17  &  3.5527e-15&2.1324e-14 &3.7340e-15\\
10   & 1.3878e-17  & 0.0000e-0&1.4213e-14 &8.5278e-14\\
\hline
\end{tabular}
\caption{\label{table-4} Error Energy $|\mathbb{E}^{n+1/2}-\mathbb{E}^{1/2}|$ for Example 4.}
\end{center}
\end{table}

\section{Summary and Conclusion}
In this study,  we have developed and analyzed the HDG approach for solving the nonlinear Klein-Gordon equation. The nonlinear function being odd degree polynomials is locally Lipschitz. HDG projection was used to derive the error estimates for the semi-discrete case. Further, element-by-element post-processing has been proposed. It has been demonstrated that the post-processed discrete displacement has super-convergence property, which is of order $k + 2, k \geqslant 1$. In contrast, the discrete displacement and its gradient have optimal rates of convergence, that is, of order $k + 1, k\geqslant0$, when piecewise polynomials of degree $k\geq 1$ are used to approximate both the displacement and its flux variables.
To discretize in time, a second-order conservative finite difference scheme is used. It is shown that discrete energy is conserved for $n\geqslant1$, and a priori error estimates are derived. A non-conservative scheme is also suggested, which computes the discrete solutions via a linear system of algebraic equations at each time level, and its optimal error estimates are briefly addressed. A variant of the HDG method is also analyzed, along with error analysis in brief.  Finally, several computational experiments are conducted, the results of which confirm our theoretical findings.The present analysis can be easily extended  to  problems with odd degree polynomial nonlinearity  or to sine Gordon equation.  

\bibliography{refer_1}

\begin{thebibliography}{10}

\bibitem{achouri2019finite}
T.~Achouri.
\newblock Finite difference schemes for the two-dimensional semilinear wave
  equation.
\newblock {\em Numerical Methods for Partial Differential Equations},
  35(1):200--221, 2019.

\bibitem{baccouch2019optimal}
M.~Baccouch.
\newblock Optimal error estimates of the local discontinuous {G}alerkin method
  for the two-dimensional sine-{G}ordon equation on cartesian grids.
\newblock {\em International Journal of Numerical Analysis \& Modeling},
  16(3):436--462, 2019.

\bibitem{baker1976error}
G.~A. Baker.
\newblock Error estimates for finite element methods for second order
  hyperbolic equations.
\newblock {\em SIAM journal on numerical analysis}, 13(4):564--576, 1976.

\bibitem{brezzi1985two}
F.~Brezzi, J.~Douglas, and L.~D. Marini.
\newblock Two families of mixed finite elements for second order elliptic
  problems.
\newblock {\em Numerische Mathematik}, 47:217--235, 1985.

\bibitem{chen2019superconvergent}
G.~Chen, B.~Cockburn, J.~Singler, and Y.~Zhang.
\newblock Superconvergent interpolatory {HDG} methods for reaction diffusion
  equations.
\newblock {\em Journal of Scientific Computing}, 81(3):2188--2212, 2019.

\bibitem{cockburn2010hybridizable}
B.~Cockburn.
\newblock The hybridizable discontinuous {G}alerkin methods.
\newblock In {\em Proceedings of the International Congress of Mathematicians
  2010, (volume 4) Vol. I: Plenary Lectures and Ceremonies Vols. II--IV:
  Invited Lectures}, pages 2749--2775. World Scientific, 2010.

\bibitem{cockburn2023}
B.~Cockburn.
\newblock Hybridizable discontinuous {G}alerkin methods for second-order
  elliptic problems: overview, a new result and open problems.
\newblock {\em Japan Journal of Industrial and Applied Mathematics},
  40(3):1637--1676, 2023.

\bibitem{cockburn2018stormer}
B.~Cockburn, Z.~Fu, A.~Hungria, L.~Ji, M.~A. S{a}nchez, and F.J. Sayas.
\newblock Stormer-{N}umerov {HDG} methods for acoustic waves.
\newblock {\em Journal of Scientific Computing}, 75(2):597--624, 2018.

\bibitem{cockburn2009derivation}
B.~Cockburn and J.~Gopalakrishnan.
\newblock The derivation of hybridizable discontinuous {G}alerkin methods for
  stokes flow.
\newblock {\em SIAM Journal on Numerical Analysis}, 47(2):1092--1125, 2009.

\bibitem{cockburn2009unified}
B.~Cockburn, J.~Gopalakrishnan, and R.~Lazarov.
\newblock Unified hybridization of discontinuous {G}alerkin, mixed, and
  continuous {G}alerkin methods for second order elliptic problems.
\newblock {\em SIAM Journal on Numerical Analysis}, 47(2):1319--1365, 2009.

\bibitem{cockburn2010projection}
B.~Cockburn, J.~Gopalakrishnan, and F.-J. Sayas.
\newblock A projection-based error analysis of {HDG} methods.
\newblock {\em Mathematics of Computation}, 79(271):1351--1367, 2010.

\bibitem{cockburn2012conditions}
B.~Cockburn, W.~Qiu, and K.~Shi.
\newblock Conditions for superconvergence of {HDG} methods for second-order
  elliptic problems.
\newblock {\em Mathematics of Computation}, 81(279):1327--1353, 2012.

\bibitem{cockburn2014uniform}
B.~Cockburn and V.~Quenneville-B{\'e}lair.
\newblock Uniform-in-time superconvergence of the {HDG} methods for the
  acoustic wave equation.
\newblock {\em Mathematics of Computation}, 83(285):65--85, 2014.

\bibitem{cockburn2013conditions}
B.~Cockburn and K.~Shi.
\newblock Conditions for superconvergence of {HDG} methods for {S}tokes flow.
\newblock {\em Mathematics of Computation}, 82(282):651--671, 2013.

\bibitem{cockburn2019interpolatory}
B.~Cockburn, J.~R. Singler, and Y.~Zhang.
\newblock Interpolatory {HDG} method for parabolic semilinear {PDEs}.
\newblock {\em Journal of Scientific Computing}, 79(3):1777--1800, 2019.

\bibitem{currie1980statistical}
J.~F. Currie, J.~A. Krumhansl, A.~R. Bishop, and S.~E. Trullinger.
\newblock Statistical mechanics of one-dimensional solitary-wave-bearing scalar
  fields: Exact results and ideal-gas phenomenology.
\newblock {\em Physical Review B}, 22(2):477, 1980.

\bibitem{duncan1997sympletic}
D.~B. Duncan.
\newblock Sympletic finite difference approximations of the nonlinear
  {K}lein-{G}ordon equation.
\newblock {\em SIAM Journal on numerical analysis}, 34(5):1742--1760, 1997.

\bibitem{jimenez1990analysis}
S.~Jim{\'e}nez and L.~V{\'a}zquez.
\newblock Analysis of four numerical schemes for a nonlinear {K}lein-{G}ordon
  equation.
\newblock {\em Applied Mathematics and Computation}, 35(1):61--94, 1990.

\bibitem{JT}
C.~Johnson and V.~Thom{\'e}e.
\newblock Error estimates for some mixed finite element methods for parabolic
  type problems.
\newblock {\em RAIRO. Analyse num{\'e}rique}, 15(1):41--78, 1981.

\bibitem{kapitanski1994global}
L.~Kapitanski.
\newblock Global and unique weak solutions of nonlinear wave equations.
\newblock {\em Mathematical Research Letters}, 1(2):211--223, 1994.

\bibitem{kesavan2015topics}
S.~Kesavan.
\newblock {\em Topics in Functional Analysis and Applications}.
\newblock New Age International Pvt Limited Publishers, 2015.

\bibitem{kirby2012}
R.~M. Kirby, S.~J. Sherwin, and B.~Cockburn.
\newblock To {HDG} or to {CG}: A comparative study.
\newblock {\em Journal of Scientific Computing}, 51(1):183--212, 2012.

\bibitem{kragh1984equation}
H.~Kragh.
\newblock Equation with the many fathers. {T}he {K}lein--{G}ordon equation in
  1926.
\newblock {\em American Journal of Physics}, 52(11):1024--1033, 1984.

\bibitem{pani2001mixed}
A.~K. Pani and J.~Y. Yuan.
\newblock Mixed finite element method for a strongly damped wave equation.
\newblock {\em Numerical Methods for Partial Differential Equations: An
  International Journal}, 17(2):105--119, 2001.

\bibitem{qiu2018hdg}
W.~Qiu, J.~Shen, and K.~Shi.
\newblock An {HDG} method for linear elasticity with strong symmetric stresses.
\newblock {\em Mathematics of Computation}, 87(309):69--93, 2018.

\bibitem{qiu2016hdg}
W.~Qiu and K.~Shi.
\newblock An {HDG} method for convection diffusion equation.
\newblock {\em Journal of Scientific Computing}, 66(1):346--357, 2016.

\bibitem{sanchez2022error}
N.~S{\'a}nchez, T.~S{\'a}nchez-Vizuet, and M.~Solano.
\newblock Error analysis of an unfitted {HDG} method for a class of non-linear
  elliptic problems.
\newblock {\em Journal of Scientific Computing}, 90(3):1--28, 2022.

\bibitem{sanchez2021priori}
N.~S{\'a}nchez, T.~S{\'a}nchez-Vizuet, and M.~E. Solano.
\newblock A priori and a posteriori error analysis of an unfitted {HDG} method
  for semi-linear elliptic problems.
\newblock {\em Numerische Mathematik}, 148(4):919--958, 2021.

\bibitem{sinha1998effect}
R.~K. Sinha and A.~K. Pani.
\newblock The effect of spatial quadrature on finite element galerkin
  approximations to hyperbolic integro-differential equations.
\newblock {\em Numerical functional analysis and optimization},
  19(9-10):1129--1153, 1998.

\bibitem{tourigny1990product}
Y.~Tourigny.
\newblock Product approximation for nonlinear {K}lein-{G}ordon equations.
\newblock {\em IMA journal of Numerical Analysis}, 10(3):449--462, 1990.

\bibitem{wong1997initial}
Y.~Wong, Q.~Chang, and L.~Gong.
\newblock An initial-boundary value problem of a nonlinear {K}lein-{G}ordon
  equation.
\newblock {\em Applied Mathematics and Computation}, 84(1):77--93, 1997.

\bibitem{yadav2019superconvergent}
S.~Yadav and A.~K. Pani.
\newblock Superconvergent discontinuous {G}alerkin methods for nonlinear
  parabolic initial and boundary value problems.
\newblock {\em Journal of Numerical Mathematics}, 27(3):183--202, 2019.

\end{thebibliography}
\bibliographystyle{plain}
\end{document}